\numberwithin{equation}{section}
\newtheorem{theorem}{Theorem}[section]
\newtheorem{proposition}[theorem]{Proposition}
\newtheorem{corollary}[theorem]{Corollary}
\newtheorem{lemma}[theorem]{Lemma}
\newtheorem{fact}[theorem]{Fact}
\newtheorem{conjecture}[theorem]{Conjecture}
\newtheorem{question}[theorem]{Question}
\theoremstyle{definition}
\newtheorem{example}[theorem]{Example}
\newtheorem{remark}[theorem]{Remark}
\newtheorem*{acknowledge}{Acknowledgments}
\DeclareMathOperator{\Inv}{Inv}
\DeclareMathOperator{\Sph}{Sph}
\DeclareMathOperator{\End}{End}
\DeclareMathOperator{\Span}{Span}
\DeclareMathOperator{\Isom}{Isom}
\DeclareMathOperator{\inv}{inv}
\DeclareMathOperator{\sph}{sph}
\DeclareMathOperator{\LLC}{LLC}
\definecolor{olive}{rgb}{0.5,0.5,0.0}
\newcommand{\ep}{\varepsilon}
\newcommand{\msf}{\mathsf}
\newcommand{\mbb}{\mathbb}
\newcommand{\mH}{\mathbb{H}_\mathsf{K}}
\newcommand{\mfO}{\mathsf O}
\newif\ifshowflag
\newif\ifshowqstn
\newif\ifshowinfo
\newcommand{\eqx}{\simeq}   
\renewcommand{\phi}{\varphi}
\renewcommand{\emptyset}{\varnothing}
\DeclareMathOperator{\diam}{Diam}
\DeclareMathOperator{\dist}{dist}
\DeclareMathOperator{\Mob}{M\ddot{o}b}
\newcommand{\mathfont}{\mathsf} 
\newcommand{\mfC}{{\mathfont C}}      
\newcommand{\mfH}{{\mathfont H}}      
\newcommand{\mfN}{{\mathfont N}}      
\newcommand{\mfR}{{\mathfont R}}      
\newcommand{\mfZ}{{\mathfont Z}}      
\newcommand{\mfK}{{\mathfont K}}      
\def\rf#1{\@rf{#1}#1:;;}
\def\rfs#1{\@rfs{#1}#1:;;}
\def\rfm#1{\@rfF#1<>;;}
\def\@C{C}\def\@CC{CC}\def\@E{E}\def\@F{F}\def\@L{L}\def\@P{P}\def\@Q{Q}
\def\@R{R}\def\@K{K}\def\@S{S}\def\@T{T}\def\@TT{TT}\def\@X{X}\def\@s{s}\def\@f{f}\def\@A{A}
\def\@rf#1#2:#3;;{\def\@b{#2}
  \ifx\@b\@C Corollary~\ref{#1}\else%
  \ifx\@b\@E (\ref{#1})\else
  \ifx\@b\@F Fact~\ref{#1}\else%
  \ifx\@b\@L Lemma~\ref{#1}\else%
  \ifx\@b\@P Proposition~\ref{#1}\else%
  \ifx\@b\@Q Question~\ref{#1}\else%
  \ifx\@b\@R Remark~\ref{#1}\else%
  \ifx\@b\@K Conjecture~\ref{#1}\else%
  \ifx\@b\@S Section~\ref{#1}\else%
  \ifx\@b\@A Appendix~\ref{#1}\else%
  \ifx\@b\@T Theorem~\ref{#1}\else%
  \ifx\@b\@TT Theorem~\ref{#1}\else%
  \ifx\@b\@X Example~\ref{#1}\else%
  \ifx\@b\@s Subsection~\ref{#1}\else
  \ifx\@b\@f Figure~\ref{#1}\else%
  \ref{#1}\fi\fi\fi\fi\fi\fi\fi\fi\fi\fi\fi\fi\fi\fi\fi}
\def\@rfs#1#2:#3;;{\def\@b{#2}
  \ifx\@b\@C Corollaries~\ref{#1}\else%
  \ifx\@b\@CC Corollaries~\ref{#1}\else%
  \ifx\@b\@F Facts~\ref{#1}\else%
  \ifx\@b\@L Lemmas~\ref{#1}\else%
  \ifx\@b\@P Propositions~\ref{#1}\else%
  \ifx\@b\@Q Questions~\ref{#1}\else%
  \ifx\@b\@R Remarks~\ref{#1}\else%
  \ifx\@b\@S Sections~\ref{#1}\else%
  \ifx\@b\@T Theorems~\ref{#1}\else%
  \ifx\@b\@TT Theorems~\ref{#1}\else%
  \ifx\@b\@X Examples~\ref{#1}\else%
  \ifx\@b\@s \S\ref{#1}\else
  \ifx\@b\@f Figures~\ref{#1}\else%
  \ref{#1}\fi\fi\fi\fi\fi\fi\fi\fi\fi\fi\fi\fi\fi}
\def\@rfF<#1>#2;;{\def\@c{#2}
  \@rfs{#1}#1:;;\ifx\@c\empty\else\@rfL:#2;;\fi}
\def\@rfL:#1<#2>#3;;{\def\@b{#2}\def\@c{#3}
  #1\ifx\@b\empty\else\ref{#2}\ifx\@c\empty\else\@rfL:#3;;\fi\fi}
\def\vint_#1{\mathchoice
          {\mathop{\vrule width 6pt height 3 pt depth -2.5pt
                  \kern -8pt \intop}\nolimits_{\kern -4pt#1}}%
          {\mathop{\vrule width 5pt height 3 pt depth -2.6pt
                  \kern -6pt \intop}\nolimits_{#1}}%
          {\mathop{\vrule width 5pt height 3 pt depth -2.6pt
                  \kern -6pt \intop}\nolimits_{#1}}%
          {\mathop{\vrule width 5pt height 3 pt depth -2.6pt
                   \kern -6pt \intop}\nolimits_{#1}}}
\thanks{E.L.D. was partially supported by the Academy of Finland (grant
288501
`\emph{Geometry of subRiemannian groups}')
and by the European Research Council
 (ERC Starting Grant 713998 GeoMeG `\emph{Geometry of Metric Groups}').
}
\title[Invertible Homogeneous Metric Spaces]{
Toward a quasi-M\"obius characterization of \\ Invertible Homogeneous Metric Spaces
}
\date{\today}   
\author{David Freeman}
\author{Enrico Le Donne}
\address{%
\footnotesize
\begin{tabular}{lr}
\textsf{University of Cincinnati Blue Ash College}\\
\textsf{Department of Math, Physics, and Computer Science}\\
\textsf{9555 Plainfield Road, Cincinnati, Ohio 45236, United States}
\end{tabular}
\normalsize}
\email{david.freeman@uc.edu}
\address{%
\footnotesize
\begin{tabular}{lr}
\textsf{University of Jyv\"askyl\"a}\\
\textsf{Department of Mathematics and Statistics}\\
\textsf{P.O. Box (MaD), FI-40014, Finland}
\end{tabular}
\normalsize}
\email{ledonne@msri.org}
\begin{document} 

\keywords{M\"obius maps,  isometric homogeneity, bi-Lipschitz homogeneity, Ptolemy space, quasi-inversion, rank-one symmetric space, metric Lie group, Heisenberg group}
\subjclass[2010]{54E35 (28A80, 53C35, 22E25)} 

\begin{abstract}
We study locally compact metric spaces that enjoy various forms of homogeneity with respect to M\"obius self-homeomorphisms. We investigate connections between such homogeneity and the combination of isometric homogeneity with invertibility. In particular, we provide a new characterization of snowflakes of boundaries of rank-one symmetric spaces of non-compact type among locally compact and connected metric spaces. Furthermore, we investigate the metric implications of homogeneity with respect to uniformly strongly quasi-M\"obius self-homeomorphisms, connecting such homogeneity with the combination of uniform bi-Lipschitz homogeneity and quasi-invertibility. In this context we characterize spaces containing a cut point and  provide several metric properties of spaces containing no cut points. These results are motivated by a desire to characterize the snowflakes of boundaries of rank-one symmetric spaces up to bi-Lipschitz equivalence. 
\end{abstract}

\maketitle

\section{Introduction}\label{S:intro}                 

This paper contributes to the metric characterization of boundaries of rank-one symmetric spaces of non-compact type. Such symmetric spaces are Gromov hyperbolic and therefore possess boundaries at infinity, which we view as metric spaces equipped with visual distances. Such distances are non-Riemannian, unless the symmetric space is real hyperbolic. On this subject there have been several contributions: \cite{
MR1114460,
Bourdon95, Bourdon, 
FLS07, 
Foertsch_Lytchak_Schroeder_ERR,
Foertsch-Schroeder,  
Foertsch_Schroeder12,
Platis-ptolemy,
Buyalo_Schroeder14,
Buyalo_Schroeder15,
PS17}.
%

In the present paper, we focus on the fact that boundaries at infinity of rank-one symmetric spaces of non-compact type (and their snowflakes) enjoy an abundance of metric homogeneity. In fact, when equipped with a visual distance with base point at infinity they are isometrically homogeneous and admit an inversion (as defined below). Furthermore, the compact boundary is $2$-point M\"obius homogeneous. One of our main results provides a metric characterization of such spaces in terms of these properties (see \rf{T:sharp}).

Looking beyond characterizations up to isometric equivalence, we also work towards a characterization of snowflakes of boundaries of non-compact rank-one symmetric spaces up to bi-Lipschitz equivalence. Thus we investigate spaces that are uniformly bi-Lipschitz homogeneous and admit quasi-inversions (see \rf{S:terminology:intro} for relevant definitions). We point out that our results in this area fit into a progression of ongoing study. For example, the authors of the present paper have previously studied bi-Lipschitz homogeneity in the context of curves, surfaces, and more general spaces (see \cite{FH-blh,Freeman-blh,LeDonne-blh,LeDonne-geodesic}). 
Quasi-invertibility and bi-Lipschitz homogeneity have been studied in  \cite{BB05,BHX-inversions,DCL17, Freeman-iiblh,Freeman-invertible}. Our main goal in continuing this line of investigation is to uncover metric and geometric implications of such homogeneity and/or invertibility in a rather general metric setting. We articulate a specific version of this goal in \rf{K:coarse1} below. Before stating this conjecture, we explain a bit of terminology.

We  denote by $\mathcal{A}$ the collection of normed division algebras $\{\msf{R}, \msf{C}, \msf{H}, \msf{O}\}$. Here $\msf{R}$ denotes the real numbers, $\msf{C}$ the complex numbers, $\msf{H}$ the quaternions, and $\msf{O}$ the octonions. Using this notation, we write $\mbb{H}_\mfK^n$ (with $ \mfK\in\mathcal{A}$ and $n\in \msf N$) to denote the Lie group obtained as the stereographic projection of the visual boundary of the $\mfK$-hyperbolic space
of dimension $n+1$ over $\mfK$.
 We call $\mathbb{H}_\mfK^n$ the \textit{$n$-th $\mfK$-Heisenberg group}, and denote by $\rho$ its visual distance with base point at infinity; see \eqref{E:Ham} for the formal definition. 
 Classically, it is known that these metric spaces are isometrically homogeneous and invertible. In particular, they are  bi-Lipschitz homogeneous and quasi-invertible in the sense of Section~\ref{sec:coarse:term}.
 With this terminology in hand we state the following conjecture.

\begin{conjecture}\label{K:coarse1}
A metric space $X$ is  bi-Lipschitz equivalent to $(\mbb{H}_\mfK^n,\rho^\alpha)$, for some $\alpha \in (0,1]$,  if and only if  $X$ is locally compact, connected, bi-Lipschitz homogeneous, and quasi-invertible.
\end{conjecture}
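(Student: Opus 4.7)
The forward implication is routine: each of local compactness, connectedness, bi-Lipschitz homogeneity, and quasi-invertibility is a bi-Lipschitz invariant, and each holds for $(\mathbb{H}_\mathsf{K}^n,\rho^\alpha)$. The first three are inherited from the isometrically homogeneous Lie group structure, while quasi-invertibility follows because the classical inversion of $\mathbb{H}_\mathsf{K}^n$ is locally bi-Lipschitz away from its fixed points and this property is preserved by snowflaking.

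My plan for the reverse implication is to pass from $X$ to an infinitesimal model via Gromov--Hausdorff tangents, identify that model rigidly as a Heisenberg group, and then bootstrap the identification to a global bi-Lipschitz equivalence. As a preliminary reduction, combining bi-Lipschitz homogeneity with a choice of quasi-inversion $\mfI$ produces, by conjugation, a transitive action of a group of uniformly strongly quasi-M\"obius self-homeomorphisms of $X$, placing $X$ in the framework the paper develops for such spaces. From that framework I expect to draw absence of cut points, linear local connectedness, and Ahlfors $Q$-regularity for some $Q>0$.

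Taking a pointed Gromov--Hausdorff tangent $\hat X$ at an arbitrary basepoint then produces, using local compactness and uniform bi-Lipschitz homogeneity, a proper metric space inheriting the four hypotheses and additionally carrying a one-parameter family of metric dilations arising from the rescaling procedure. Montgomery--Zippin theory applied to the resulting locally compact group of self-homeomorphisms of $\hat X$ should force $\hat X$ to be a Carnot group equipped with a left-invariant homogeneous quasi-metric; the quasi-inversion on $\hat X$, together with the quantitative M\"obius-type geometry from the previous step, should then reduce this to the Iwasawa case and identify $\hat X$ bi-Lipschitzly with some $(\mathbb{H}_{\mfK'}^{n'},\rho^\beta)$.

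The principal obstacle is the final step, promoting a bi-Lipschitz identification of \emph{tangents} to a bi-Lipschitz identification of $X$ itself. Bi-Lipschitz homogeneity alone does not rigidify the large-scale geometry in this way, in contrast to the isometric setting where Montgomery--Zippin provides such rigidity directly. The approach I would pursue is to exploit quasi-inversions as long-range quasi-dilations: conjugating $\mfI$ by bi-Lipschitz self-maps that move a fixed point far from the inversion center should produce, up to uniformly bounded distortion, quasi-similarities of $X$ of arbitrary ratio. A patching argument in the style of Bonk--Kleiner, using these quasi-similarities to transport the tangent identification across all scales, would then yield a global bi-Lipschitz map $X \to (\mathbb{H}_\mathsf{K}^n,\rho^\alpha)$. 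Controlling the distortion of these quasi-similarities uniformly across all scales, and verifying that the limiting map is everywhere defined and injective, is the technical heart of the proposed argument.
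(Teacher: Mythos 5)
The statement you are trying to prove is Conjecture~1.1 of the paper; the authors explicitly leave it open (they note it is unknown even when $X$ is a Carnot group, e.g.\ whether $\mbb{H}_\mfC^1\times\mbb{H}_\mfC^1$ admits a quasi-inversion), and they prove only partial results toward it (Theorem~1.7: the cut-point case, plus linear local connectedness, Ahlfors regularity, and annular quasi-convexity or snowflake structure in the remaining cases). So your sketch should be measured against the reasons the conjecture is open, and it does not overcome them.

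The central gap is your appeal to Montgomery--Zippin on the tangent. That machinery requires an honest locally compact topological group acting transitively --- this is exactly what the paper has in the isometric setting of Theorem~1.2, where $\Mob(\Sph_p(X))$ is a genuine group and Kramer's classification of $2$-transitive actions applies. In the bi-Lipschitz setting there is no such group: the collection of $L$-bi-Lipschitz (or $C$-strongly quasi-M\"obius) self-maps is not closed under composition with a uniform constant, and a Gromov--Hausdorff tangent of a uniformly bi-Lipschitz homogeneous space is again only uniformly bi-Lipschitz homogeneous, not isometrically homogeneous. Hence the tangent need not be a Carnot group, and no structure theory forces it to be one. Your proposed remedy for the second gap --- promoting a tangent identification to a global bi-Lipschitz map by ``patching with quasi-similarities in the style of Bonk--Kleiner'' --- also does not exist as a technique: even in the best-understood case (Ahlfors $2$-regular, linearly locally connected spheres) Bonk--Kleiner produces a \emph{quasisymmetric} parametrization, not a bi-Lipschitz one, and bi-Lipschitz parametrization from infinitesimal data is a well-known open problem. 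The forward implication of your sketch is fine and agrees with the paper's remark that it is straightforward; the reverse implication, however, is not a proof but a restatement of the two open obstructions (rigidifying the infinitesimal model without a genuine isometry group, and globalizing a local/infinitesimal identification) that make this a conjecture rather than a theorem.
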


A significant issue in the study of ideal boundaries is that, in general, the boundary of a Gromov-hyperbolic space is not connected. Even if the boundary is connected it may not contain any non-degenerate rectifiable curves, thus precluding the possibility of any geometric analysis. In this connection we remind the reader that any snowflake of a visual distance remains a visual distance which allows for no non-degenerate rectifiable curves.  In \rf{T:main:connected}, under the aforementioned homogeneity assumptions, we consider a dichotomy: either the space contains a cut point, or it does not. In the first case, we show that such a space is
bi-Lipschitz homeomorphic to a snowflake of the Euclidean line. Thus we provide a complete characterization of spaces containing a cut point. In the second case, when the space does not contain a cut point, we prove several properties that are useful for developing analysis on these metric spaces and point in the direction of \rf{K:coarse1}.

\subsection{Results}\label{s:results}
Here we summarize the main results of the present paper. 

\subsubsection{M\"obius homogeneity}
We first present results addressing various forms of M\"obius homogeneity in connected and locally compact metric spaces. In the following we denote by  $\hat X:=X\cup\{\infty\} $ the compactification of $X$ equipped with its natural  M\"obius structure, see \rf{S:terminology:intro}.

\begin{theorem}\label{T:sharp}
Suppose $X$ is an unbounded, locally compact, complete, and connected metric space. For $\mathcal{A}=\{\msf{R}, \msf{C}, \msf{H}, \msf{O}\}$, the following statements are equivalent:
\begin{enumerate}
	\item{For some $n\in\msf{N}$, $\mfK\in\mathcal{A}$, and $\alpha\in(0,1]$, the space $X$ is M\"obius homeomorphic to $(\mbb{H}_\mfK^n,\rho^\alpha)$.}
	\item{For some $n\in\msf{N}$,  $\mfK\in\mathcal{A}$,   and $\alpha\in(0,1]$, the space $X$ is isometric to $(\mbb{H}_\mfK^n,  \rho^\alpha)$.}
	\item{The space $X$ is isometrically homogeneous and invertible.}
	\item{The space   $\hat X$ is $2$-point M\"obius homogeneous.}
\end{enumerate}
\end{theorem}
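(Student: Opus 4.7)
The plan is to establish the cyclic chain $(2) \Rightarrow (1) \Rightarrow (4) \Rightarrow (3) \Rightarrow (2)$. The implication $(2) \Rightarrow (1)$ is immediate, since every isometry is a M\"obius map. For $(1) \Rightarrow (4)$, I would invoke the classical fact that $\widehat{\mbb H_\mfK^n}$ (with the M\"obius structure induced by $\rho^\alpha$) is M\"obius equivalent to the visual boundary sphere of the rank-one symmetric space of non-compact type associated with $\mfK$ in dimension $n+1$, and that the full ambient isometry group of this symmetric space acts $2$-transitively on pairs of distinct boundary points by M\"obius transformations; snowflaking preserves the M\"obius class, so this $2$-point homogeneity transfers through any M\"obius identification to $\hat X$.

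For $(4) \Rightarrow (3)$, let $G$ denote the M\"obius group of $\hat X$ and write $G_\infty$ for its stabilizer of $\infty$. The $2$-point hypothesis gives, for every pair of distinct points $x,y \in X$, a M\"obius self-homeomorphism fixing $\infty$ and sending $x\mapsto y$, so $G_\infty$ acts transitively on $X$; it likewise furnishes, for any chosen $o \in X$, a M\"obius map swapping $o$ and $\infty$. The principal task is then to produce (a) a metric on $X$ in the same M\"obius class as $d$ in which every element of $G_\infty$ acts isometrically, and (b) a M\"obius involution of $\hat X$ interchanging $o$ and $\infty$. For (a), I would calibrate a norm $|\cdot|$ by fixing a reference point $p$ with $d(o,p)=1$ and using the $2$-point homogeneity to transport the unit sphere consistently; cross-ratios are M\"obius-invariant, and that is exactly the rigidity needed for the normalization to be well defined. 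For (b), I would start from any M\"obius map swapping $o$ and $\infty$ and reduce it modulo the two-point stabilizer $G_{o,\infty}$, which under the metric produced in (a) becomes a compact group of rotations fixing $o$, allowing one to select an involutive representative.

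For $(3) \Rightarrow (2)$, I would appeal to classification results for isometrically homogeneous, locally compact, connected metric spaces that admit an inversion. By Berestovskii--Montgomery--Zippin-type structure theorems, $\Isom(X)$ is a Lie group acting transitively on $X$, so $X$ is a smooth homogeneous space; the inversion supplies a conformal involution exchanging bounded and unbounded regions, which in turn generates a one-parameter group of metric dilations. This combination of transitive isometric action together with dilations and an inversion is extremely rigid and, following the approach of \cite{Freeman-invertible,BHX-inversions} and the rank-one characterizations cited in the introduction, forces $X$ to be isometric to some $(\mbb H_\mfK^n, \rho^\alpha)$. The principal obstacle I anticipate lies in $(4) \Rightarrow (3)$, specifically in the construction of the M\"obius-equivalent metric on $X$ realizing $G_\infty$ as an isometry group: M\"obius maps fixing $\infty$ are only conformal in general, and normalizing them canonically using only $2$-point transitivity, without circular dependence on the to-be-constructed isometric structure, is the genuinely delicate step.
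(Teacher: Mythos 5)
Your outline of the easy implications ($(2)\Rightarrow(1)$ and $(1)\Rightarrow(4)$) is fine, but the two substantive implications contain genuine gaps. For $(4)\Rightarrow(3)$ you correctly locate the difficulty but miss the lemma that dissolves it: a M\"obius homeomorphism between unbounded metric spaces that sends bounded sets to bounded sets in both directions is automatically a \emph{similarity} of the original metrics (Lemma~\ref{L:similarity}; the proof is a cross-ratio computation letting one variable go to $\infty$). Hence every M\"obius self-map of $\hat X$ fixing $\infty$ is already a similarity of $(X,d)$ --- no re-metrization or ``calibration of a norm'' is needed, and the circularity you worry about never arises. What remains is to upgrade similarities to isometries: if $h$ is a $\lambda$-similarity with $\lambda\neq 1$ sending $x$ to $y$, completeness and the Banach fixed point theorem give a fixed point $o$ of $h$; choosing a second similarity $g$ with $g(o)=y$, the composition $g\circ h^{-1}\circ g^{-1}\circ h$ is an isometry sending $x$ to $y$. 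For the inversion, a direct cross-ratio computation applied to a M\"obius map swapping $p$ and $\infty$ yields $d(f(a),f(b))=r\,d(a,b)/(d(a,p)\,d(b,p))$ for a constant $r>0$; rescaling the metric by $1/\sqrt r$ gives an honest inversion of $(X,\tfrac{1}{\sqrt r}d)$, and composing with a dilation of the appropriate factor (supplied by Proposition~\ref{P:equivalence1a}) produces an inversion of $(X,d)$ itself. Note also that the definition of inversion in force here does not require an involution, so your step of extracting an involutive representative from the two-point stabilizer is unnecessary.

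The more serious gap is $(3)\Rightarrow(2)$, the heart of the theorem: ``this combination is extremely rigid and forces $X$ to be isometric to $(\mbb H_\mfK^n,\rho^\alpha)$'' is a restatement of the conclusion, not an argument. The actual route is: connectedness together with Proposition~\ref{P:equivalence1a} gives dilations at $p$ of \emph{every} factor; \cite[Theorem 1.4]{CKLDGO17} then endows $X$ with the structure of a metric Lie group whose dilations fixing the identity are automorphisms; Siebert's theorem forces this group to be nilpotent and simply connected, so $X$ is homeomorphic to $\msf R^m$ and $\hat X$ to the topological sphere $\msf S^m$; Lemma~\ref{L:quasimetric} makes $\Mob(\hat X)$ a locally compact, $\sigma$-compact group acting continuously, effectively and $2$-transitively on this sphere, so Kramer's classification of $2$-transitive actions on spheres (Fact~\ref{F:Kramer}) identifies the action with that of $G_\mfK$ on the boundary of a $\mfK$-hyperbolic space; finally the M\"obius rigidity theorem of Platis--Schroeder (Theorem~\ref{T:rigidity}) pins the metric down to $c\,\rho^\alpha$. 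Each of these inputs is essential and none is supplied by a generic Montgomery--Zippin argument: in particular, without first establishing that $\hat X$ is a topological sphere (which requires the nilpotent Lie group structure), the classification of $2$-transitive actions cannot be invoked at all, and without the rigidity theorem one only identifies $X$ up to M\"obius equivalence, not isometry.
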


The main content of the above theorem is the implication (3)$\implies$(2). 
The style and conclusion of this result is similar to the main result of \cite{Buyalo_Schroeder14}:
Let $X$ be a compact Ptolemy space possessing at least one Ptolemy circle and for which there exists a unique {\em space inversion} with respect to any two distinct points $\omega$, $\omega'$ of $X$ and any sphere between $\omega$, $\omega'$. Under these assumptions, the space $X$ is M\"obius equivalent to the boundary at infinity of a rank-one symmetric space of non-compact type taken with the canonical M\"obius structure. 
Amidst the apparent similarities between \rf{T:sharp} and the result of \cite{Buyalo_Schroeder14}, we point out some key differences. A space inversion (in the sense of \cite{Buyalo_Schroeder14}) must be an involution that is fixed point free. The inversions of Theorem~\ref{T:sharp} need not possess these properties. Moreover, we  do not assume presence of a Ptolemy circle.

Given a Carnot group $\mbb{G}$ equipped with a Carnot-Carath\'eodory distance $d$ (or any comparable distance), the compactification $\hat{ \mbb{G}}$
  may exhibit unique geometry at the point at $\infty$. In particular, it may be the case that certain classes of mappings on $\hat {\mbb{G}}$ must fix this point; see, for example, \cite[page 249]{Freeman-invertible}. Or, for another example along this line, note the \textit{Pointed Sphere Conjecture} of Yves de Cornulier recorded in \cite[Conjecture 19.104]{Cornulier18}. In light of these observations, the homogeneity assumptions on the sphericalization of $X$ in \rf{T:sharp} may be seen as a natural way to restrict the geometric (and resulting algebraic) structure of $X$ itself.
  
Conjecture~\ref{K:coarse1} is open even in the case that the space $X$ is a Carnot group. In particular, it is an open question whether the complexified   Heisenberg group $(\mbb{H}_\mfC^1)_\mfC$ and the direct product   $\mbb{H}_\mfC^1\times \mbb{H}_\mfC^1$ admit a quasi-inversion. However, there is some relevant work by Xie \cite{Xie-non-rigid}.

In order to provide additional context for \rf{T:sharp}, we record the following immediate corollary. This result should be seen as a rephrasing of the  result in \cite{KL16}.

\begin{corollary}\label{C:three_point}
Suppose $X$ is an unbounded, locally compact, and connected metric space. There exists $n\in\msf N$    and $\alpha\in(0,1]$ such that $X$ is isometric to $(\msf{R}^n, |\cdot|^\alpha)$ if and only if 
$\hat X$ is $3$-point M\"obius homogeneous.
\end{corollary}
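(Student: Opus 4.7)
The plan is to combine \rf{T:sharp} with the classical fact that, among the four families of compactified $\mfK$-Heisenberg groups, only the real case admits a triply transitive M\"obius action.

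For the direction $(\Rightarrow)$, observe that $(\msf{R}^n,|\cdot|)$ coincides with $(\mbb{H}_\msf{R}^n,\rho)$ in the paper's notation, so $\hat X$ is M\"obius equivalent to the round sphere $S^n$ (with a snowflaked metric). The M\"obius group of $S^n$ is the conformal group $O(n+1,1)$, which acts triply transitively on $S^n$. Since the defining condition for a M\"obius self-homeomorphism involves only cross-ratios, this class is invariant under the replacement $d \mapsto d^\alpha$ (cross-ratios are simply raised to the $\alpha$-th power), so $3$-point M\"obius homogeneity of $\hat X$ is preserved for every $\alpha\in(0,1]$.

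For the direction $(\Leftarrow)$, note that $3$-point M\"obius homogeneity of $\hat X$ trivially implies $2$-point M\"obius homogeneity. Applying \rf{T:sharp} then yields that $X$ is isometric to some $(\mbb{H}_\mfK^n,\rho^\alpha)$ with $n\in\msf{N}$, $\mfK\in\mathcal{A}$, and $\alpha\in(0,1]$. The remaining task is to rule out $\mfK\in\{\msf{C},\msf{H},\msf{O}\}$. For each such $\mfK$, the M\"obius group of $\hat{\mbb{H}_\mfK^n}$ coincides with the isometry group of the rank-one symmetric space $\mbb{H}_\mfK^{n+1}$ acting on its visual boundary. This action preserves a nontrivial real-valued invariant on ordered triples of distinct boundary points (the Cartan angular invariant in the complex case, with analogues in the quaternionic and octonionic settings), which obstructs $3$-transitivity and forces $\mfK=\msf{R}$.

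The main obstacle is efficiently packaging this last non-transitivity assertion. The cleanest route is to cite \cite{KL16} directly, since the corollary is advertised as a rephrasing of that result; alternatively one can argue from the well-known fact that the underlying Carnot--Carath\'eodory structure of $\mbb{H}_\mfK^n$ is non-abelian precisely when $\mfK\neq\msf{R}$, together with the observation that this non-abelian structure produces an additional M\"obius invariant on triples beyond those available in the abelian (i.e. Euclidean) case.
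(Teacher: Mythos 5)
Your argument is correct and shares the paper's overall architecture (both directions funnel through Theorem~\ref{T:sharp}), but the step that singles out $\mfK=\msf{R}$ is handled by a genuinely different mechanism. The paper stays at the level of \emph{pairs}: since any M\"obius self-map of $\hat X$ fixing $\infty$ is a similarity of $X$ (Lemma~\ref{L:similarity}, Remark~\ref{R:similarity}), $3$-point M\"obius homogeneity of $\hat X$ forces $2$-point \emph{isometric} homogeneity of $X$ for pairs at equal distance, and among the spaces $(\mbb{H}_\mfK^n,\rho^\alpha)$ only $\msf{R}^n$ enjoys this (for $\mfK\neq\msf{R}$ an isometry fixing the identity is an automorphism preserving the vertical stratum, so a vertical pair cannot be carried to a horizontal pair at the same Kor\'anyi distance). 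You instead work with \emph{triples}, using the Cartan angular invariant and its quaternionic/octonionic analogues to obstruct $3$-transitivity. This is valid, but it needs one more input than you state: the Cartan invariant obstructs $3$-transitivity of the \emph{isometry group} of the symmetric space, so you must also know that $\Mob(\hat{\mbb{H}}_\mfK^n)$ is no larger than that group. That identification is true and is already available from the machinery in the proof of Theorem~\ref{T:sharp} (Fact~\ref{F:Kramer} pins the M\"obius group down to $G_\mfK$ or $G_\mfK^\circ$), but it is an extra ingredient the pair-based argument avoids; the paper's version is therefore a bit more self-contained, while yours makes the geometric source of the failure of $3$-transitivity in the non-real cases more explicit. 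Citing \cite{KL16} outright is also legitimate --- the paper frames the corollary as a rephrasing of that result --- though it then ceases to be an application of Theorem~\ref{T:sharp}. Your forward direction (conformal group of $\msf{S}^n$ is triply transitive, and M\"obius maps are insensitive to snowflaking because cross-ratios are merely raised to the power $\alpha$) matches what the paper leaves implicit.
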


Indeed, one can verify (via \rf{T:sharp} and results from Section~\ref{s:ihms}) that the $3$-point M\"obius homogeneity of $\hat X$ implies   $2$-point isometric homogeneity of $X$. In other words, we can conclude that, given pairs $\{x,y\}$ and $\{a,b\}$ of distinct points from $X$ such that $d(x,y)=d(a,b)$, there exists $f\in \Isom(X)$ with $f(x)=a$ and $f(y)=b$. The only space $\mathbb{H}_\mfK^n$ whose snowflakes enjoy this property is $\mathbb{H}_\msf{R}^n=\msf{R}^n$. 

In light of \rf{T:sharp} and \rf{C:three_point}, we observe that $2$-point and $3$-point M\"obius homogeneity in locally compact and connected metric spaces provide metric analogues to algebraic results about $2$-point and $3$-point topological homogeneity in such spaces (see, for example,  \cite[Theorem~3.3 and Corollary~3.4]{Kramer-transitive}). 

We also explore the metric consequences of $1$-point M\"obius homogeneity. In order to compensate for this weaker homogeneity assumption we work under stronger connectivity assumptions. Under such assumptions, one can prove that the geometry of the metric space under scrutiny is sub-Riemannian in nature (at least, up to bi-Lipschitz distortion). 

\begin{theorem}\label{QC:Mobius:thm}
Let $X$ be a compact and quasi-convex metric space of finite topological dimension. If $X$ is M\"obius homogeneous, then $X$ is bi-Lipschitz homeomorphic to a sub-Riemannian manifold.
\end{theorem}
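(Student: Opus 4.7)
The plan is to deduce from M\"obius homogeneity, compactness, and quasi-convexity that $X$ is bi-Lipschitz homogeneous with uniform constants, and then to invoke structural results for bi-Lipschitz homogeneous geodesic spaces of finite topological dimension to identify $X$, up to bi-Lipschitz equivalence, with a sub-Riemannian manifold.

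The first step is to extract uniform quasi-symmetric homogeneity from M\"obius homogeneity. Any M\"obius self-homeomorphism $f$ of $X$ preserves the cross-ratio $[x,y,z,w]=\frac{d(x,z)\,d(y,w)}{d(x,w)\,d(y,z)}$, and a standard auxiliary-point trick converts exact cross-ratio preservation into a quasi-symmetric estimate on triples. Compactness of $X$ allows the auxiliary point to be chosen uniformly over $f$, so the resulting distortion function $\eta$ depends only on $X$; combined with transitivity, this gives uniform quasi-symmetric homogeneity.

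The main obstacle is to upgrade uniform quasi-symmetric homogeneity to uniform bi-Lipschitz homogeneity. In general this fails (snowflake maps are quasi-symmetric but not bi-Lipschitz), but snowflaking is incompatible with quasi-convexity, which is precisely what we have. Given an orbit map $f$ and points $x,y \in X$, I would join them by a rectifiable curve of length at most $C\,d(x,y)$, partition it at a scale where the quasi-symmetric control forces Lipschitz behaviour on each short segment, sum the resulting bounds, and argue symmetrically for $f^{-1}$. The delicate point is verifying that such an ``integration'' of quasi-symmetric control along curves yields linear bounds, with constants depending only on $\eta$ and the quasi-convexity constant; this is where all of the previous hypotheses are used simultaneously, and it is the step I expect to require the most care.

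Once uniform bi-Lipschitz homogeneity is established, we may replace $d$ by a bi-Lipschitz equivalent geodesic metric afforded by quasi-convexity, and then by a further bi-Lipschitz equivalent metric under which the relevant transitive group action becomes isometric (for instance by Haar-averaging on the closure of the bi-Lipschitz orbit group inside the homeomorphism group, local compactness of this closure following from Arzel\`a--Ascoli applied to the uniform distortion bounds together with the doubling property inherited from compactness). The result is a locally compact, connected, isometrically homogeneous geodesic metric space of finite topological dimension, so the classical structure theorem of Berestovskii together with the refinements described in \cite{LeDonne-geodesic} realizes it as a sub-Finsler homogeneous manifold $G/H$; a routine replacement of the sub-Finsler norm on the horizontal distribution by a comparable sub-Riemannian one then delivers the desired conclusion.
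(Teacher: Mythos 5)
Your proposal has a genuine gap at its central step, and the difficulty it runs into is precisely the one the paper's proof is designed to avoid. First, M\"obius homogeneity of a compact space does not yield \emph{uniform} quasi-symmetric homogeneity: the auxiliary-point trick converts cross-ratio preservation into a quasi-symmetry estimate whose distortion function depends on where $f$ sends the auxiliary points, and this cannot be controlled uniformly over the family. The group $\Mob(\msf{S}^n)$ acting on the round sphere is already a counterexample: it is not equicontinuous, hence not uniformly quasi-symmetric, even though rotations happen to furnish uniformly good orbit maps; your argument, which asserts a distortion function depending only on $X$ for every M\"obius $f$, cannot be correct. Second, and more seriously, the proposed upgrade from quasi-symmetric to bi-Lipschitz by ``integrating'' along quasi-convex curves fails: quasi-symmetry controls ratios of distances scale by scale, these bounded distortions compound multiplicatively across scales, and so there is no scale at which quasi-symmetric control forces Lipschitz behaviour on short segments. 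The radial stretch $z\mapsto z|z|^{\ep}$ of the Euclidean disk is quasi-symmetric, the disk is geodesic, and the map is not bi-Lipschitz; so the fact that a quasi-convex space is not a non-trivial snowflake does not imply that its quasi-symmetric self-maps are bi-Lipschitz. Finally, even granting uniform bi-Lipschitz homogeneity, the Haar-averaging step requires a \emph{compact} group acting transitively: the closure of a uniformly bi-Lipschitz family is compact by Arzel\`a--Ascoli but is not a group (compositions degrade the constant), and $\Mob(X)$ itself may be non-compact.

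The paper's proof never needs any uniformity, which is why it succeeds. Quasi-convexity is used only to get connectivity and local connectivity; these feed into a Montgomery--Zippin/Gleason--Yamabe argument (Proposition~\ref{prop:Montgomery:Zippin}) showing that $\Mob(X)$ is a Lie group acting transitively, so $X$ is a manifold $G/H$. It then observes that each \emph{individual} M\"obius self-map of a compact space is bi-Lipschitz, with a constant depending on the map (the Kinneberg remark cited in the proof), replaces $d$ by a bi-Lipschitz equivalent geodesic distance, and applies Le Donne's theorem on transitive Lie group actions by bi-Lipschitz maps on geodesic spaces, which directly produces an invariant completely non-holonomic distribution whose Carnot--Carath\'eodory metric is (globally, by compactness) bi-Lipschitz to $d$. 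If you wish to salvage your outline, you should drop the two uniformization steps and the averaging, establish the Lie group structure of $\Mob(X)$, and invoke that structural theorem; your closing reduction from sub-Finsler to sub-Riemannian is the only part that survives essentially as written.
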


We stress that  the boundary of a {\rm CAT}$(-1)$-space is naturally equipped with special visual  distances: either Bourdon distances or Hamenst\"{a}dt distances; see Section~\ref{S:BH}. With such metrics, the boundaries become Ptolemy spaces by \cite{Foertsch-Schroeder}. With this in mind, a consequence of the above theorem is the following.

\begin{corollary}\label{MCircles:Mobius:thm}
Let $X$ be the boundary of a {\rm CAT}$(-1)$-space. If $X$ is M\"obius homogeneous, of finite topological dimension, and connected by M\"obius circles, then $X$ is bi-Lipschitz homeomorphic to a sub-Riemannian manifold.
\end{corollary}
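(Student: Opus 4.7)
The plan is to derive \rf{MCircles:Mobius:thm} directly from \rf{QC:Mobius:thm}, so the task reduces to verifying that $X$ is compact and quasi-convex; finite topological dimension and M\"obius homogeneity are already hypothesized. Compactness is immediate when $X$ is equipped with a Bourdon distance, since the visual boundary of a proper Gromov-hyperbolic space is a compact metric space. In the Hamenst\"adt case one first passes to the M\"obius-invariantly defined sphericalization of $X$, which is compact and still inherits M\"obius homogeneity, finite topological dimension, and connectedness by M\"obius circles (the sphericalization being $1$-quasi-M\"obius); one then argues for the sphericalized space, and since sphericalization is bi-Lipschitz on bounded subsets away from the added point, the conclusion transfers back to $X$.

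The heart of the proof is quasi-convexity. By the theorem of Foertsch and Schroeder cited in the excerpt, $X$ is a Ptolemy space, so each M\"obius circle is M\"obius-equivalent to the standard round circle and, after removing one point, to $\msf{R}$. In particular, every M\"obius circle carries a canonical arc-length parameterization, and locally its arc length is comparable to ambient distance. I would then combine M\"obius homogeneity with compactness to extract a \emph{uniform} comparability constant: given two sufficiently close points of $X$ lying on (or near) a common M\"obius circle, I would use the transitive action of M\"obius self-homeomorphisms to move their configuration to a fixed reference configuration on a fixed M\"obius circle; a normal-families / precompactness argument for the family of M\"obius transformations involved then rules out degeneracy of the ratio between circle-arc length and ambient distance. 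This produces local quasi-convexity, which upgrades to global quasi-convexity on the compact connected space $X$ by a standard chaining argument on a finite cover, where the existence of connecting chains of arcs is precisely the content of the hypothesis that $X$ is connected by M\"obius circles. Once compactness and quasi-convexity are in place, \rf{QC:Mobius:thm} yields the conclusion.

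The main obstacle is the uniformity step: promoting the pointwise local comparability between arc length along any individual M\"obius circle and the ambient distance in $X$ to a constant that is uniform across \emph{all} M\"obius circles and \emph{all} pairs of nearby points. It is precisely here that the M\"obius-homogeneity hypothesis is indispensable, as it forces the local metric picture around every point of $X$ to be the same up to bounded M\"obius distortion, preventing the arc-length-to-distance ratio from degenerating along any sequence of circles. Everything else is either a direct invocation of the cited results (Foertsch--Schroeder, \rf{QC:Mobius:thm}) or a routine topological argument on the compact space $X$.
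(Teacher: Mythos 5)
Your overall reduction is the same as the paper's: establish compactness and quasi-convexity, handle the Bourdon versus Hamenst\"adt bookkeeping, invoke Foertsch--Schroeder to get the Ptolemy property, and finish with \rf{QC:Mobius:thm}. That frame is correct. The problem is the step you yourself flag as the heart of the matter: the uniform comparability between arc length along M\"obius circles and ambient distance. Your proposed normal-families argument does not go through as described. M\"obius homogeneity here is only one-point transitivity, so you cannot normalize a \emph{pair} of nearby points together with a M\"obius circle through them to a fixed reference configuration; and even granting enough transitivity, $\Mob(X)$ is in general non-compact (it contains maps with unbounded local metric distortion, e.g.\ the dilations produced in \rf{P:equivalence1a}), so there is no precompactness of the family of normalizing maps from which to extract a uniform bound on the arc-length-to-distance ratio. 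As written, the quasi-convexity claim is not established.

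The paper's route avoids this entirely, and is worth internalizing because it shows the uniformity you were worried about is free: \rf{Moebius QC} proves that \emph{any} Ptolemy space connected by M\"obius circles is $K$-quasi-convex with a universal constant $K\leq 144$, using neither homogeneity nor compactness. Given $p,q$ on a M\"obius circle $C$ of diameter $D$, one chooses a third point $w\in C$ with $d(p,w)$ and $d(q,w)$ bounded below by a definite fraction of $\max\{d(p,q),D\}$ (splitting into the cases $D\leq 6d(p,q)$ and $D>6d(p,q)$), passes to $\Inv_w(X)$ where $C\setminus\{w\}$ is an isometric copy of $\msf{R}$ by \rf{equivalenza}, reads off the $i_w$-length of the arc from $p$ to $q$ avoiding $w$ as exactly $i_w(p,q)=d(p,q)/(d(p,w)d(q,w))$, and converts back via $d(x,y)=d(x,w)d(y,w)\,i_w(x,y)\leq D^2\,i_w(x,y)$. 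The two choices of $w$ make the resulting bound on ${\rm Length}_d$ universal. If you replace your normal-families step with this inversion computation, your proof becomes essentially the paper's.
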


\subsubsection{Uniformly strongly quasi-M\"obius homogeneity}
Having discussed various forms of homogeneity with respect to M\"obius maps, we now present a few results concerning homogeneity with respect to uniformly strongly quasi-M\"obius maps. We refer the reader to Section~\ref{sec:coarse:term} for relevant terminology.
We start with the coarse analogue of the equivalence (3)$\iff$(4) of Theorem~\ref{T:sharp}:

\begin{proposition}\label{P:quasi-version}
A proper and unbounded metric space $X$ is uniformly bi-Lipschitz homogeneous and quasi-invertible if and only if $\hat X$ is 2-point uniformly strongly quasi-M\"obius homogeneous.
\end{proposition}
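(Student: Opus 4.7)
The plan is to transfer back and forth between classes of self-homeomorphisms of $X$ and of $\hat X$ via the sphericalization correspondence. Under this correspondence, each bi-Lipschitz self-map of $X$ extends to a self-map of $\hat X$ fixing $\infty$ that is strongly quasi-M\"obius with parameters depending only on the bi-Lipschitz constant, while each quasi-inversion of $X$ at a point $x$ extends to a strongly quasi-M\"obius self-map of $\hat X$ swapping $x$ with $\infty$. Conversely, a strongly quasi-M\"obius self-map of $\hat X$ that fixes $\infty$ restricts to a quasisimilarity on $X$, and one that swaps $x$ with $\infty$ is a quasi-inversion at $x$. Since uniformly strongly quasi-M\"obius maps are closed under composition and inversion, these translation lemmas are the only tools I expect to need.

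For the forward direction, I would build the required 2-point homogeneity of $\hat X$ as a composition of building blocks drawn from the uniformly bi-Lipschitz self-homeomorphisms of $X$ and the quasi-inversions. Given ordered pairs of distinct points $(p_1, p_2)$ and $(q_1, q_2)$ in $\hat X$, the target map would be produced as the four-step composition $I_{q_1}^{-1} \circ \phi_2 \circ I_{q_1} \circ \phi_1$, where $\phi_1$ is a bi-Lipschitz self-map of $X$ sending $p_1 \mapsto q_1$, $I_{q_1}$ is a quasi-inversion at $q_1$ (sending $q_1 \mapsto \infty$), and $\phi_2$ is a bi-Lipschitz self-map of $X$ sending $I_{q_1}(\phi_1(p_2))$ to $I_{q_1}(q_2)$, both of which lie in $X$ in the non-degenerate case. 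The composition takes $p_1 \mapsto q_1$ and $p_2 \mapsto q_2$, and is uniformly strongly quasi-M\"obius because each factor is. Configurations in which some $p_i$ or $q_i$ equals $\infty$ reduce to shorter compositions.

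For the reverse direction, quasi-invertibility is immediate: given $x \in X$, applying 2-point homogeneity to $(x, \infty)$ and $(\infty, x)$ produces a uniformly strongly quasi-M\"obius self-homeomorphism of $\hat X$ swapping $x$ and $\infty$, which is the required quasi-inversion at $x$ with parameters independent of $x$. For uniform bi-Lipschitz homogeneity, given $x, y \in X$, applying 2-point homogeneity to $(x, \infty)$ and $(y, \infty)$ yields a uniformly strongly quasi-M\"obius $f$ fixing $\infty$ with $f(x) = y$; evaluating cross-ratios with one argument equal to $\infty$ shows that $f|_X$ is a quasisimilarity with uniform distortion but with an a priori arbitrary scale factor.

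The main obstacle, and the heart of the argument, is to eliminate this free scale factor and promote the quasisimilarity to a genuine bi-Lipschitz self-map of $X$ with uniform constant. I expect this to be handled by a second application of 2-point homogeneity to an auxiliary pair whose matched distances pin down the scale---for instance, pairs $(x, z)$ and $(y, w)$ with $d(x, z) = d(y, w)$, which exist since $X$ is proper and unbounded---combined with the quasi-inversions produced in the first step, so as to bring the composite map back to one fixing $\infty$ with scale close to one. Once this scale-control step is in place, the rest of the proof is a straightforward bookkeeping exercise invoking the translation lemmas stated at the outset.
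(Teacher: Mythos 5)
Your forward direction (from uniform bi-Lipschitz homogeneity plus quasi-invertibility to $2$-point homogeneity of $\hat X$) is essentially the paper's argument: the paper reduces an arbitrary pair to the fixed pair $(\infty,p)$ by an $L$-bi-Lipschitz map followed, if needed, by a quasi-inversion, which is the same composition you write down, and the translation facts you invoke (an $L$-bi-Lipschitz self-map of $X$ fixing $\infty$ is uniformly strongly quasi-M\"obius on $\hat X$; a quasi-inversion at $q$ swaps $q$ with $\infty$ and is uniformly strongly quasi-M\"obius) are exactly the computations carried out there. The same is true of your derivation of quasi-invertibility in the converse direction and of the observation that a strongly quasi-M\"obius self-map of $\hat X$ fixing $\infty$ restricts to a quasi-similarity of $X$ with uniform distortion but uncontrolled scale.

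The gap is at the step you yourself flag as the heart of the matter: normalizing that scale. The mechanism you propose --- applying $2$-point homogeneity to matched pairs $(x,z)$ and $(y,w)$ with $d(x,z)=d(y,w)$ --- does not work, because the resulting map is only guaranteed to send $x\mapsto y$ and $z\mapsto w$; nothing forces it to fix $\infty$, so it need not restrict to a quasi-similarity of $X$ at all, and post-composing with a quasi-inversion to push its image of $\infty$ back to $\infty$ destroys the control on $x$ and $z$. In effect you would need to prescribe the images of three points, which $2$-point homogeneity cannot deliver. The paper's resolution is different and is the one missing idea here: given $h$ fixing $\infty$ with $h(x)=y$, distortion $L=L(C)$ and scale $\lambda$, if $\lambda\notin[L^{-1},L]$ then $h$ or $h^{-1}$ is a strict contraction of $X$, which is complete because it is proper, and hence has a fixed point $o$ by the Banach fixed point theorem; choosing $g$ fixing $\infty$ with $g(o)=y$ and scale $\mu$, the composition $g\circ h^{-1}\circ g^{-1}\circ h$ still sends $x$ to $y$ (via $x\mapsto y\mapsto o\mapsto o\mapsto y$) but has scale $\mu\lambda^{-1}\mu^{-1}\lambda=1$ up to a bounded multiplicative error, hence is uniformly bi-Lipschitz. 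Without this fixed-point/commutator device, or a genuine substitute for it, the sufficiency direction of your proof is incomplete.
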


The reader may consult \rf{P:invertible_char} for additional characterizations of spaces that are both uniformly bi-Lipschitz homogeneous and quasi-invertible.   

For the coarse analogue of the equivalence (1)$\iff$(2) of Theorem~\ref{T:sharp} we show the following. 

\begin{proposition}\label{general_prop}
A homeomorphism $f:X\to Y$ between proper and unbounded metric spaces is strongly quasi-M\"obius if and only if it is bi-Lipschitz. Furthermore, $f$ is M\"obius if and only if $f$ is a similarity.
\end{proposition}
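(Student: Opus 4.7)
The forward implications are routine. If $f$ is $L$-bi-Lipschitz, every factor $d(f(a),f(b))$ lies in $[L^{-1}d(a,b),\,L\,d(a,b)]$, so any cross-ratio is distorted by at most $L^4$, realizing a linear distortion function $\eta(t)=L^4 t$; hence $f$ is strongly quasi-M\"obius. A similarity scales all distances by a common factor, so preserves all cross-ratios exactly, and is M\"obius.

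For the converse, the essential use of properness and unboundedness is the following: a homeomorphism $f:X\to Y$ between proper unbounded spaces must send sequences escaping to infinity in $X$ to sequences escaping to infinity in $Y$, and vice versa. Indeed, if $w_n\to\infty$ in $X$ but $f(w_n)$ stays in some ball of $Y$, properness of $Y$ extracts a convergent subsequence, and then continuity of $f^{-1}$ forces a convergent subsequence of $w_n$, contradicting $w_n\to\infty$. The same argument reversed handles the other direction.

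Now fix three distinct points $x,y,z\in X$ and any $w_n\to\infty$. Since $|d(x,w_n)-d(y,w_n)|\le d(x,y)$ while $d(x,w_n)\to\infty$, one has $d(y,w_n)/d(x,w_n)\to 1$, and hence
\[
\mathrm{cr}(x,y,z,w_n)\;=\;\frac{d(x,z)\,d(y,w_n)}{d(x,w_n)\,d(y,z)}\;\longrightarrow\;\frac{d(x,z)}{d(y,z)}.
\]
By the previous step $f(w_n)\to\infty$, so the same computation applied to the image produces $\mathrm{cr}(f(x),f(y),f(z),f(w_n))\to d(f(x),f(z))/d(f(y),f(z))$. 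Passing to the limit in the defining inequality of strongly quasi-M\"obius (with distortion constant $C\geq 1$, and $C=1$ in the M\"obius case) yields
\[
\frac{d(f(x),f(z))}{d(f(y),f(z))}\;\le\;C\cdot\frac{d(x,z)}{d(y,z)}.
\]

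Writing $Q(a,b):=d(f(a),f(b))/d(a,b)$ for $a\ne b$, the displayed bound reads $Q(x,z)\le C\,Q(y,z)$ for any three distinct points; swapping $x$ and $y$ gives the reverse bound. Thus for each fixed $z$, the function $x\mapsto Q(x,z)$ is pinned to a single value up to a factor of $C$. The symmetry $Q(a,b)=Q(b,a)$ feeds back into this comparison to vary the second argument at the cost of a further factor of $C$, so that $Q(x,z)$ is comparable, up to a factor of $C^2$, to any fixed reference value $Q(x_0,y_0)$. This is exactly the bi-Lipschitz conclusion. When $C=1$ every comparison collapses to an equality, so $Q$ is constant and $f$ is a similarity. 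The only non-routine step in the whole argument is the preservation-of-infinity observation in the second paragraph; everything else is elementary limiting of cross-ratios.
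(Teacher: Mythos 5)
Your proof is correct and takes essentially the same route as the paper's: both exploit the fourth point of the cross-ratio escaping to infinity (the paper via the formal point $\infty$ of the sphericalized space $\Sph_p(X)$, you via explicit limits along a sequence $w_n\to\infty$, justified by the same properness argument that bounded sets are preserved) to conclude that the ratio $d(f(a),f(b))/d(a,b)$ is pinned within a uniform multiplicative constant of a single value $\lambda$, whence $f$ is a quasi-similarity, hence bi-Lipschitz, and an exact similarity when $C=1$. If anything, your explicit limiting of finite-point cross-ratios makes transparent a step the paper leaves implicit, namely that the extension of $f$ to the one-point compactification remains $C$-strongly quasi-M\"obius on quadruples containing $\infty$.
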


It is straightforward to verify that if $X$ is bi-Lipschitz equivalent to some $(\mbb{H}_\mfK^n,\rho^\alpha)$, then $X$ is uniformly bi-Lipschitz homogeneous and quasi-invertible. Conjecture~\ref{K:coarse1} claims the converse for connected spaces: the coarse analogue of (3)$\implies$(2) of Theorem~\ref{T:sharp}. Our main contribution towards this conjecture  is as follows.

\begin{theorem}\label{T:main:connected}
Suppose $X$ is an unbounded  locally compact  metric space that is uniformly bi-Lipschitz homogeneous, quasi-invertible, and contains an non-degenerate curve. 
\begin{enumerate}
\item The metric space $X$ is path connected, locally path connected, proper, and Ahlfors regular.
	\item{If $X$ has a  cut point, then, for some $\alpha\in(0,1]$, $X$ is bi-Lipschitz homeomorphic to $(\msf{R}, |\cdot|^\alpha)$.}
		\item{If $X$ has no cut points, then $X$ is linearly locally connected. Furthermore,
			\begin{enumerate} 
				\item{If $X$ contains a non-degenerate rectifiable curve, then $X$ is annularly quasi-convex.}
				\item{If $X$  does not contain a non-degenerate rectifiable curve, then, for some $\alpha\in(0,1)$, $X$ is bi-Lipschitz homeomorphic to an $\alpha$-snowflake.}
			\end{enumerate}}
\end{enumerate}
\end{theorem}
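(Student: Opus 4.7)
The plan is to exploit uniform bi-Lipschitz homogeneity to transport local structure across the space, and to use quasi-invertibility to convert small-scale information into large-scale information and vice versa.

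\textbf{Part (1).} Start from the given nondegenerate curve $\gamma$. Because $X$ is uniformly bi-Lipschitz homogeneous, images of $\gamma$ pass through every point; stitching such arcs together shows that $X$ is path connected and locally path connected. Properness should follow from local compactness plus uniform bi-Lipschitz homogeneity: the existence of one compact closed ball of definite radius can be transported, via the uniform BLH action, to every point, yielding a uniform size below which closed balls are compact; quasi-invertibility then upgrades this to all radii. For Ahlfors regularity I would build a natural measure by choosing a Borel measure of finite positive mass on one ball and pushing it around through the uniform BLH action, using a Vitali-type averaging; uniform bi-Lipschitz control gives both doubling (from the point of view of metric balls) and a uniform lower bound on masses of balls of each fixed radius, from which Ahlfors regularity of some exponent $Q$ follows.

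\textbf{Part (2).} If some $p\in X$ is a cut point, then by uniform bi-Lipschitz homogeneity every point of $X$ is a cut point, since bi-Lipschitz homeomorphisms preserve the cut-point property and the BLH group is transitive. By the classical theorem (going back to Ward and Whyburn) that a connected, locally compact, separable metric space in which every point is a cut point is homeomorphic to an open interval—here necessarily $\mathbb{R}$ since $X$ is unbounded—one obtains $X\approx\mathbb{R}$ topologically. Now use the conjunction of uniform BLH with quasi-invertibility to pin down the metric: the BLH action realizes arbitrarily large-scale ``translations'' along the line, while quasi-inversions at two different points produce controlled ``dilations.'' The existence of a one-parameter family of BL-dilations with a uniform multiplicative constant forces the distance function, restricted to the line, to satisfy $d(x,y)\asymp|x-y|^\alpha$ for some $\alpha\in(0,1]$, i.e.\ $X$ is bi-Lipschitz to $(\mathbb{R},|\cdot|^\alpha)$.

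\textbf{Part (3), LLC and (3a).} In the no-cut-point case, for LLC$_1$ use that the uniformly BLH images of the given curve through nearby pairs can be spliced inside a ball of comparable radius (since the curve itself has definite diameter and can be scaled via BLH and quasi-inversion); for LLC$_2$ apply a quasi-inversion based at the center of the ball to convert the complement of a small ball into a bounded region and apply LLC$_1$ there. For (3a), if $X$ carries a non-degenerate rectifiable curve $\sigma$, then via uniform BLH we obtain a family of rectifiable curves of every scale through every point with a uniform ratio of length to diameter; combining this family with LLC and the absence of cut points yields annular quasi-convexity in the usual way: given $x,y$ in an annulus, connect them by a finite chain of images of $\sigma$ with uniformly controlled total length, dodging the small ball by invoking LLC$_2$ at each step.

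\textbf{Part (3b).} If $X$ has no non-degenerate rectifiable curves, the aim is to produce a metric $d_0$ with $d\asymp d_0^{1/\alpha}$ for some $\alpha\in(0,1)$, so that $(X,d)$ is bi-Lipschitz to the $\alpha$-snowflake of $(X,d_0)$. The strategy is to use Ahlfors regularity from Part (1) together with uniform BLH to show that for nearby points $x,y$ one has $d(x,y)^\beta$ comparable to the infimal diameter of connected sets joining them, where $\beta$ is determined by the Ahlfors exponents of $X$ and of its rectifiable model (this is in the spirit of Tyson's and Assouad's snowflake-detection criteria); then verify that this quantity is itself a metric. The main obstacle I expect is this last step: exhibiting a concrete non-snowflaked companion metric (equivalently, pinning down $\alpha$) and verifying the triangle inequality quantitatively, which will require combining the uniform BLH/quasi-invertibility data with the Ahlfors-regular structure more carefully than in the other parts.
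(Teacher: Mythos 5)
Your outline for part (1) and for the topological half of part (2) matches the paper's route (transport a curve by the homogeneity, get properness from quasi-dilations, get doubling and then cite Ahlfors regularity for inversion-invariant bi-Lipschitz homogeneous spaces), but there are genuine gaps in the remaining steps. In part (2), the classical characterization of $\msf R$ needs every point to be a \emph{strong} cut point (complement has exactly two components), and that upgrade is not automatic: the paper proves it separately (Lemma~\ref{L:cut}) using an ``almost-identity'' bi-Lipschitz map (Lemma~\ref{L:q_translate}) to rule out three or more complementary components. More importantly, your claim that the quasi-dilations ``force'' $d(x,y)\asymp|x-y|^{\alpha}$ is not an argument: a uniform family of quasi-dilations on a topological line does not by itself produce a power law. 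The paper instead parameterizes $X\cong\msf R$ by its Hausdorff $Q$-measure (with $Q$ the Ahlfors exponent from part (1)) and invokes a generalized chord-arc condition for bi-Lipschitz homogeneous arcs to get $d(x,y)\asymp\mathcal H^{Q}(g([a,b]))^{1/Q}$, which is exactly the bi-Lipschitz equivalence with $(\msf R,|\cdot|^{1/Q})$. Some such measure-theoretic input is needed to pin down $\alpha$.

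For (3a), ``dodging the small ball by invoking LLC$_2$ at each step'' does not work: LLC$_2$ only supplies continua, with no length control, so it cannot convert quasi-convexity into \emph{annular} quasi-convexity. The paper's proof (Proposition~\ref{P:aqcx}) first splits off the degenerate case $\mathcal H^{1}(B)<\infty$ (where $X$ is bi-Lipschitz to $\msf R$), and otherwise uses a compactness argument (Lemma~\ref{L:porous}) to find a ball of definite radius disjoint from any given short curve, then applies Lemma~\ref{L:q_translate} to push a quasi-convex curve off the central point while keeping its length comparable; this is the step your sketch is missing. Finally, (3b) is where your approach would fail outright: constructing a companion metric from ``infimal diameters of connected sets'' and hoping the triangle inequality holds is exactly the obstruction you name, and there is no evident way past it. The paper's key idea here is entirely different: it shows (Lemma~\ref{L:line_fitting}) that if $X$ were line-fitting in Laakso's sense, then an Arzel\`a--Ascoli limit of rescaled chains would produce a non-degenerate Lipschitz curve in $X$, contradicting the hypothesis of (3b); the cited characterization of non-line-fitting spaces then yields that $X$ is bi-Lipschitz homeomorphic to a non-trivial snowflake. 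Without the line-fitting criterion (or an equivalent), part (3b) remains unproved in your plan.
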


We point out that \rf{T:main:connected} affirms \rf{K:coarse1} in the case that $X$ is path connected and contains a cut point.

At the present time we are unable to provide a coarse analogue of Corollary~\ref{C:three_point}. In particular, we do not have answers to the following questions.

\begin{question}
Is the compactified Heisenberg group $\Sph_e({\mbb{H}_\mfC^1})$ 3-point uniformly strongly quasi-M\"obius homogeneous?
\end{question}
\begin{question}
If a metric space is 3-point uniformly strongly quasi-M\"obius homogeneous and homeomorphic to $\msf{S}^n$, is it  bi-Lipschitz homeomorphic to a snowflake of the round $n$-sphere? 
\end{question}
This last question, even in the cases $n=2$ or $3$, seems challenging. It is related to other open problems (such as \cite[Question 5]{HS-questions}) about structures on spheres that are 3-point quasi-symmetrically homogeneous.  

\subsubsection{Disconnected spaces}
Finally, we present two results pertaining to unbounded, proper, and disconnected metric spaces.  

The standard examples of disconnected, isometrically homogeneous, and invertible metric spaces are given by the boundaries at infinity of non-rooted regular trees. Indeed, let $T_N$ denote the $(N+1)$-regular tree equipped with the path distance for which each edge has length 1. The metric space $T_N$ is $\textrm{CAT}(-1)$. Furthermore, there is a notion of geodesic inversion on it, and in this sense $ T_{N} $ is a sort of non-Riemannian symmetric space. Therefore, the parabolic visual boundary $C_{N}:=\partial_\infty T_{N}$, equipped with the parabolic visual distance $\rho_s$ of parameter $s$ (see Section~\ref{Sec:ex}) is disconnected, (2-point) isometrically homogeneous, and invertible. Moreover, it is an ultrametric space. We refer the reader to \cite{BS17-trees}, for example, for more information about the boundaries of trees.

In parallel with \rf{T:sharp}, one might expect that the spaces $(   C_{N}, \rho_s )$, with $N\geq 2$ and $s>1$, are in some sense the only unbounded and disconnected metric spaces possessing the above homogeneity and invertibility properties. The following theorem tells us that this is indeed the case, up to bi-Lipschitz homeomorphisms. We refer the reader to \rf{S:disconnected} for relevant definitions.

\begin{theorem}\label{T:sharp:nonconnected}
Suppose $X$ is a disconnected, unbounded, locally compact, and isometrically homogeneous metric space. If $X$ is invertible, then there exists a positive integer $N\geq 2$ and $s>1$ such that $X$ is bi-Lipschitz homeomorphic to $(   C_{N}, \rho_s )$.
\end{theorem}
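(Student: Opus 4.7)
The plan is to establish that $X$ is a perfect, totally disconnected metric space whose natural hierarchy of metric balls forms an $(N+1)$-regular tree, and then to recognize that tree as $T_N$ so that $X$ becomes bi-Lipschitz to $(C_N,\rho_s)$.

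First I would prove that $X$ is totally disconnected. Fix $p\in X$ and let $C$ be its connected component; suppose for contradiction $C\neq\{p\}$. By homogeneity, every component of $X$ is isometric to $C$. Distinguish two cases based on $D:=\diam(C)$. If $D<\infty$, pick another component $C_2$ at distance $\delta:=d(p,C_2)>0$ (positive because $C_2$ is closed and $p\notin C_2$). Then $\varphi_p(C_2)$ is a connected component of $X\setminus\{p\}$, so it is either a whole other $X$-component (and thus isometric to $C$, with diameter $D$) or a connected component of $C\setminus\{p\}$. A counting argument using bijectivity of $\varphi_p$ on components of $X\setminus\{p\}$ shows that the first alternative holds for all but at most finitely many choices of $C_2$; for such $C_2$, the estimate $\diam(\varphi_p(C_2))\leq D/\delta^2$ together with $\diam(\varphi_p(C_2))=D$ forces $\delta\leq 1$. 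Hence all but finitely many components lie in $\overline{B(p,1+D)}$; combined with $\diam=D$ for each component, this confines all but finitely many components to a bounded region, contradicting unboundedness of $X$. If instead $D=\infty$, every component of $X$ is unbounded, and each together with $\{\infty\}$ is connected in $\hat X$; these all share $\infty$, so $\hat X$ itself is connected. But $\hat X$ is compact and $2$-point M\"obius homogeneous (a direct consequence of isometric homogeneity of $X$ together with invertibility), so by \rf{T:sharp} (applied to a suitably alternatively punctured form of $\hat X$) the space $X$ would have to be M\"obius homeomorphic to some $\mbb H_\mfK^n$ and hence connected, a contradiction. Therefore $C=\{p\}$ and $X$ is totally disconnected. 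Perfectness follows because $\varphi_p$ sends points arbitrarily far from $p$ to points arbitrarily close to but distinct from $p$; homogeneity then propagates.

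Next I would pin down the metric structure. Total disconnectedness, local compactness, and isometric homogeneity force the distance set $D_X:=\{d(p,x):x\in X\}$ to be discrete and to accumulate only at $0$ and $\infty$. Invertibility makes $D_X\setminus\{0\}$ closed under $r\mapsto 1/r$ (up to normalization), while isometric homogeneity makes it multiplicatively closed; jointly these force $D_X\setminus\{0\}=\{s^n:n\in\mbb Z\}$ for some $s>1$. The crucial geometric step is then to establish that any two closed balls $\bar B(x,s^n)$ and $\bar B(y,s^n)$ of equal radius either coincide or are disjoint. I would derive this ultrametric-style property by combining the $2$-point M\"obius homogeneity of $\hat X$ with the discreteness of $D_X$: a partial overlap of balls would, via homogeneity, realize pairs of points at distances outside $\{s^n\}$, contradicting the discreteness of $D_X$.

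With the hierarchy of balls in hand, define the graph $T$ whose vertices are the balls $\bar B(x,s^n)$ for $x\in X$ and $n\in\mbb Z$ and whose edges join each ball to its unique parent of radius $s^{n+1}$. The disjoint-or-coincide property guarantees $T$ is a tree, while isometric homogeneity gives every vertex the same number $N$ of children; perfectness and disconnectedness of $X$ force $N\geq 2$, so $T$ is the $(N+1)$-regular tree $T_N$. Its parabolic visual boundary with parameter $s$ is by construction $(C_N,\rho_s)$, and the tautological map sending $x\in X$ to the nested chain $(\bar B(x,s^n))_{n\to-\infty}$ identifies $X$ with that boundary; a direct comparison of $d$ with $\rho_s$ shows the identification is bi-Lipschitz. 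I expect the principal obstacle to be the ultrametric disjoint-or-coincide property of balls, since it is there that the rigidity of $2$-point M\"obius homogeneity combined with the discrete distance set must be most carefully exploited; the unbounded-component sub-case of the total-disconnectedness argument also relies on a delicate appeal to the connected-case result \rf{T:sharp}.
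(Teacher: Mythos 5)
Your high-level strategy (totally disconnected $\Rightarrow$ hierarchical clopen decomposition indexed by a regular tree $\Rightarrow$ coding map to $C_N$ that is bi-Lipschitz) matches the paper's in outline, but two of your load-bearing steps do not hold up. First, the assertion that total disconnectedness, local compactness, and isometric homogeneity force the distance set $D_X$ to be discrete is unjustified: homogeneous totally disconnected spaces (e.g.\ compact groups of the form $\{0,1\}^{\msf{N}}$ with suitable invariant metrics) can have distance sets that are Cantor sets, and even adding invertibility only yields, via \rf{P:equivalence1a}, that $D_X\setminus\{0\}$ is invariant under $r\mapsto 1/r$ and under multiplication by squares of its elements --- which is consistent with $D_X=[0,\infty)$. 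Nothing in the hypotheses pins $D_X$ down to $\{s^n\}$, and indeed the theorem's conclusion is only bi-Lipschitz equivalence, so no such rigidity should be expected. Second, and more fatally, the ``disjoint or coincide'' property for closed metric balls $\bar B(x,s^n)$ is exactly the ultrametric property, and your proposed derivation fails even granting $D_X=\{s^n\}$: if $s\le 2$ one can have $d(x,y)=d(y,z)=s^n$ and $d(x,z)=s^{n+1}$ with all distances in $\{s^k\}$, so equal-radius balls can partially overlap without producing any forbidden distance. Since the entire tree construction rests on this property, the proof collapses here. The paper avoids the issue by first proving \emph{uniform disconnectedness} (\rf{L:uniformly_disconnected}, itself a substantial argument via unboundedness of components and a cut-point-space contradiction), then partitioning $X$ into $\alpha$-components of the resulting clopen sets --- these partition $X$ by construction, with no coincide-or-disjoint dichotomy to prove --- and propagating that partition across scales with a genuine dilation from \rf{P:equivalence1a}; the bi-Lipschitz constant $s/\alpha$ absorbs the failure of $X$ to be ultrametric.

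Your total-disconnectedness argument also has a gap in the bounded-diameter case: the ``counting argument'' needs $C\setminus\{p\}$ to have only finitely many connected components, which you do not establish (the inversion is a bijection on components of $X_p$, but that only bounds the number of whole components sent into $C\setminus\{p\}$ by the cardinality of the set of components of $C\setminus\{p\}$, which could a priori be infinite). In the unbounded-diameter case your appeal to \rf{T:sharp} is roundabout; the cleaner observation is that if every component of $X$ is unbounded then $\Inv_q(X)$ is connected while being isometric to $X$ via an inversion at $q$, a contradiction. The paper instead shows directly (using quasi-dilations) that every component is unbounded and then derives a contradiction by showing each component both is and is not a cut-point space.
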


\rf{T:sharp:nonconnected} is sharp in the sense that, in general, a space $X$ satisfying the assumptions of \rf{T:sharp:nonconnected} might not be isometric to any $(C_{N}, \rho_s)$. This is demonstrated in \rf{X:counter}.  Furthermore, in parallel with \rf{C:three_point}, we have the following characterization.

\begin{theorem}\label{T:sharp:nonconnected:2pt}
Suppose $X$ is a disconnected, unbounded, and locally compact metric space. There exists a positive integer $N\geq 2$ and $s>1$ such that $X$ is isometric to $(C_N,\rho_s)$ if and only if $\hat{X}$ is three-point M\"obius homogeneous.
\end{theorem}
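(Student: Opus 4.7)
The forward direction is a routine check: the automorphism group of the $(N+1)$-regular tree $T_N$ acts 3-transitively on its set of ends, and these tree automorphisms induce M\"obius self-homeomorphisms of the compactified boundary $\widehat{C_N}$ equipped with its natural M\"obius structure from the parabolic visual distance.

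For the backward direction, assume $\hat X$ is 3-point M\"obius homogeneous. First, since 3-point M\"obius homogeneity implies 2-point M\"obius homogeneity, and since the argument for $(4)\Rightarrow(3)$ in \rf{T:sharp} does not essentially rely on connectedness, $X$ is isometrically homogeneous and invertible. By \rf{T:sharp:nonconnected}, there exist $N\geq 2$ and $s>1$ such that $X$ is bi-Lipschitz homeomorphic to $(C_N,\rho_s)$.

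Next, I would promote the homogeneity of $X$ to 2-point isometric homogeneity. Given two pairs $(x,y),(a,b)$ of distinct points in $X$ with $d(x,y)=d(a,b)$, 3-point M\"obius homogeneity produces a M\"obius self-homeomorphism $f$ of $\hat X$ sending $(x,y,\infty)\mapsto(a,b,\infty)$. Because $f$ fixes $\infty$, its restriction to $X$ is a similarity; the preserved distance forces its ratio to be $1$, so $f|_X$ is an isometry. Analogously, fixing a base point $o\in X$, for any $x,y\in X\setminus\{o\}$ we obtain a similarity of $X$ fixing $o$ with ratio exactly $d(o,y)/d(o,x)$.

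Finally, I would upgrade the bi-Lipschitz model from the first step to a genuine isometry. The ratios of similarities of $X$ fixing $o$ form a subgroup $G$ of $\mbb{R}_{>0}^{\times}$ equal to $\{d(o,y)/d(o,x):x,y\in X\setminus\{o\}\}$, and the distance set from $o$ is a single $G$-orbit. Local compactness together with disconnectedness of $X$ should rule out the degenerate cases $G=\mbb{R}_{>0}^{\times}$ and $G=\{1\}$, forcing $G=\{s'^{k}:k\in\mbb{Z}\}$ for some $s'>1$; combined with 2-point isometric homogeneity this forces $X$ to be genuinely ultrametric. The branching number $N'$ (the cardinality of the partition of a ball of radius $s'^{k}$ into balls of radius $s'^{k-1}$) is then an isometry invariant, and an explicit isometry $X\cong(C_{N'},\rho_{s'})$ can be constructed by identifying $X$ with the boundary of the regular tree whose vertices are the balls of $X$ ordered by inclusion. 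I expect this final step to be the main obstacle: promoting a bi-Lipschitz model to an isometric one requires a delicate analysis of the interaction between the multiplicative structure of the similarity group and the ultrametric property of $X$, and in particular verifying that the distance set is exactly a geometric progression rather than merely bi-Lipschitz equivalent to one.
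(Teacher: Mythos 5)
Your forward direction and your first reduction (3-point M\"obius homogeneity of $\hat X$ gives isometric homogeneity and invertibility of $X$, hence a bi-Lipschitz model $(C_N,\rho_s)$ via Theorem~\ref{T:sharp:nonconnected}) are fine, and your observation that a M\"obius self-map of $\hat X$ fixing $\infty$ restricts to a similarity of $X$ with the forced ratio is exactly Lemma~\ref{L:3pt}(1) of the paper. The architecture diverges after that: the paper does not pass through the bi-Lipschitz model and then upgrade it; it first proves that $X$ is an \emph{ultrametric} space, then deduces that the distance set is a geometric progression, and only then runs the tree-of-balls coding, which now produces an exact isometry because the uniform disconnectedness constant can be taken equal to $1$.

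The genuine gap is in your final step, exactly where you suspected. First, ``local compactness together with disconnectedness'' does not rule out the ratio group $G$ being dense in $\mbb{R}_{>0}$: a uniformly disconnected space can have distance set equal to a full interval (the middle-thirds Cantor set does), so disconnectedness by itself forbids nothing about $G$. Second, even granted $G=\{s'^k\}$, a discrete distance set together with 2-point isometric homogeneity does not ``force $X$ to be genuinely ultrametric''; no such implication is available. The paper resolves this by reversing your order. It first proves ultrametricity: if some triple satisfies $d(x,y)>\max\{d(x,z),d(z,y)\}$, the exact-ratio similarities of Lemma~\ref{L:3pt}(1) allow one to iterate the midpoint-insertion construction of \cite{KL16}, producing chains whose mesh decays like $c^{k}$ with $c=\max\{d(x,z),d(y,z)\}/d(x,y)<1$ and whose closure is a nondegenerate continuum, contradicting the uniform disconnectedness furnished by Lemma~\ref{L:uniformly_disconnected}. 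Only once $X$ is known to be ultrametric does discreteness of the dilation group follow, because then $\sup\{d(x,y):y\in B(x;r)\}<r$ excludes dilation factors arbitrarily close to $1$, and Lemma~\ref{L:3pt}(1) then forces $\Delta(X)=\{\lambda_0^k:k\in\msf{Z}\}$. That chain argument is the missing idea in your proposal. (A smaller point: invoking $(4)\Rightarrow(3)$ of Theorem~\ref{T:sharp} uses completeness for the Banach fixed-point step of Proposition~\ref{P:equivalence1b}, which is not among the hypotheses here; one must first observe that $X$ is proper.)
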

 
In the next (and last) theorem, we demonstrate that the structure of the boundary of a tree can still be recovered under the weaker assumptions of uniform bi-Lipschitz homogeneity and quasi-invertibility, at least up to quasi-M\"obius homeomorphisms.

\begin{theorem}\label{T:main:connected:NOT}
Suppose $X$ is a disconnected, unbounded, locally compact, and uniformly bi-Lipschitz homogeneous metric space. If $X$ is quasi-invertible, then $X$ is quasi-M\"obius homeomorphic to $(C_{2}, \rho_2 )$. 
\end{theorem}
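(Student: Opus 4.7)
The plan is to reduce to a classical characterization theorem for the Cantor set: any compact, doubling, uniformly perfect, and totally disconnected metric space is quasisymmetrically homeomorphic to the standard middle-thirds Cantor set. My goal is therefore to establish that $X$ enjoys these four metric properties (properness, doubling, uniform perfectness, and total disconnectedness) and then transfer the conclusion via sphericalization.

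First I would establish the basic metric properties. Locally compactness together with uniform bi-Lipschitz homogeneity forces properness, by propagating a fixed compact ball via the uniform family of bi-Lipschitz self-maps. Doubling then follows from properness together with uniform bi-Lipschitz homogeneity and quasi-invertibility: it suffices to establish a uniform bound on the doubling constant at one scale around one point, after which homogeneity and quasi-inversions propagate the estimate to all points and all scales. Uniform perfectness is a consequence of unboundedness plus uniform bi-Lipschitz homogeneity, with quasi-invertibility used to convert perfectness at large scales into perfectness at small scales.

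The key step is total disconnectedness. Since $X$ is disconnected, there is a clopen decomposition $X = A \sqcup B$ with both pieces nonempty. Uniform bi-Lipschitz homogeneity transports this separation to a neighborhood of any given point $x \in X$, producing a clopen neighborhood of $x$ whose diameter is controlled by the homogeneity constant. Applying a quasi-inversion centered at $x$ then sends such a clopen neighborhood to the complement of a clopen neighborhood, and iterating this procedure together with the homogeneity produces clopen neighborhoods of $x$ of arbitrarily small diameter. Hence the connected component of each point is a singleton.

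With the four properties in hand, pass to the sphericalization $\hat X$, which is a compact metric space inheriting doubling, uniform perfectness, and total disconnectedness from $X$ by standard sphericalization estimates. By the Cantor set characterization theorem, $\hat X$ is quasisymmetrically homeomorphic to the standard Cantor set. The sphericalization $\widehat{(C_{2},\rho_{2})}$ enjoys the same properties and is therefore also quasisymmetric to the Cantor set. Composing the two quasisymmetries yields a quasisymmetric, hence quasi-M\"obius, homeomorphism $\hat X \to \widehat{(C_{2},\rho_{2})}$. Finally, invoking \rf{P:quasi-version} (so that $\hat X$ is $2$-point uniformly strongly quasi-M\"obius homogeneous), we may precompose with a quasi-M\"obius self-map of $\widehat{(C_{2},\rho_{2})}$ that sends the image of $\infty_X$ to $\infty_{C_{2}}$; the resulting homeomorphism restricts to a quasi-M\"obius homeomorphism $X \to (C_{2},\rho_{2})$. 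The main obstacle will be the rigorous verification of total disconnectedness from the homogeneity and invertibility hypotheses, since the clopen separation at one fixed scale must be transported, via the interplay of bi-Lipschitz homogeneity and quasi-inversion, to produce a neighborhood basis of clopen sets at every scale around every point.
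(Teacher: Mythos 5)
There is a genuine gap, and it sits at the center of your plan. The David--Semmes characterization of the standard Cantor set up to quasisymmetry (\cite[Proposition 15.11]{DS-fractals}, which is the result the paper actually invokes) requires the space to be compact, doubling, uniformly perfect, and \emph{uniformly disconnected} --- not merely totally disconnected. These are genuinely different hypotheses: a Cantor-type subset of $[0,1]$ built by deleting middle gaps whose relative lengths tend to $0$ is compact, doubling, uniformly perfect, and totally disconnected, yet it admits $\delta$-chains at all sufficiently deep scales for every $\delta>0$ and so is not uniformly disconnected; since uniform disconnectedness is a quasisymmetric invariant, such a space is \emph{not} quasisymmetric to the standard Cantor set. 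So the ``classical characterization theorem'' as you state it, with total disconnectedness in place of uniform disconnectedness, is false, and your reduction collapses at that point. What must actually be proved is \rf{L:uniformly_disconnected}: that disconnectedness, under the standing homogeneity and quasi-invertibility hypotheses, upgrades all the way to \emph{uniform} disconnectedness. The paper does this in two stages: first total disconnectedness (via a contradiction argument showing a nondegenerate component would be unbounded and, using the inversion, would be a cut-point space while simultaneously failing to be one, invoking \cite[Theorem 3.9]{HB99}), and then a compactness/limiting argument with $\theta_k$-chains producing a nondegenerate continuum if uniform disconnectedness fails.

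Your sketch of even the weaker step (total disconnectedness) also does not go through as written. A clopen decomposition $X=A\sqcup B$ of a non-compact space gives no positive separation $\dist(A,B)>0$ and no bound on $\diam(A)$; a bi-Lipschitz self-homeomorphism carries $A$ to another clopen set of uncontrolled (possibly infinite) diameter, so ``transporting the separation to produce a clopen neighborhood of $x$ of controlled diameter'' is not justified --- indeed the paper's Step~1 shows that any nondegenerate component is necessarily unbounded, which is exactly the obstruction to your iteration. The remaining steps of your outline (properness, doubling, uniform perfectness, passage to the sphericalization, and the final bookkeeping moving the point at infinity) match the paper's argument and are fine, but the theorem is only as strong as the uniform disconnectedness input, and that is the part your proposal leaves unproved and, as sketched, unprovable by the route you describe.
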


\subsubsection{Structure of the paper}
The remainder of the Introduction provides terminology and notation for use throughout the paper. In \rf{S:isometric}, we investigate the metric geometry of space that are both isometrically homogeneous and invertible. We also study certain metric Lie groups, and provide a proof of \rf{T:sharp}. In \rf{S:strong}, we use results of Montgomery-Zippin pertaining to the structure of locally compact groups to prove Theorem~\ref{QC:Mobius:thm} and Corollary~\ref{MCircles:Mobius:thm}. In \rf{S:BLH}, we study spaces that are uniformly bi-Lipschitz homogeneous and quasi-invertible. In particular, we investigate the relationship between quasi-invertibility and quasi-dilation invariance. We also prove \rfs{P:quasi-version} and \ref{general_prop}. Before proving \rf{T:main:connected}, we provide additional characterizations of quasi-invertibility under the assumption of uniform bi-Lipschitz homogeneity in \rf{P:invertible_char}. In \rf{S:disconnected}, we prove a dichotomy between connectedness and uniform disconnectedness for uniformly bi-Lipschitz homogeneous and quasi-invertible spaces (see \rf{L:uniformly_disconnected}). Then we illustrate examples of disconnected homogeneous invertible spaces. Finally, we prove \rfs{T:sharp:nonconnected}, \ref{T:sharp:nonconnected:2pt}, and \ref{T:main:connected:NOT}.

\begin{acknowledge}
This research was initiated and (mostly) completed during visits by the first author to the University of Jyv\"askyl\"a; he thanks the University for its hospitality. We would also like to acknowledge Mario Bonk and Gareth Speight for providing helpful discussions and feedback on previous drafts of this manuscript. Furthermore, we thank Guy C. David for suggesting the use of Laakso's line-fitting property in our proof of \rf{T:main:connected}, and Viktor Schroeder for suggesting Proposition~\ref{Moebius QC}. 
\end{acknowledge}

\tableofcontents

\subsection{Terminology}\label{S:terminology:intro}

We now explain the terminology used in this introduction. In this paper, given a point $p$ in a set $X$, we define 
\[X_p:=X\setminus\{p\} \qquad\text{ and } \qquad \hat{X}:=X\cup\{\infty\}.\] 
Thus $\hat{X}_p=(X \cup\{\infty\})\setminus\{p\}$.

We also make use of the following standard metric-space notation. Given $x\in X$ and $r>0$, we write $B(x;r)$ to denote the open ball $\{z\in X\,|\,d(x,z)<r\}$ centered at $x$ of radius $r$. Given $R\geq r$, We write $A(x;r,R)$ to denote the open annulus $\{z\in X\,|\,r<d(x,z)<R\}$ centered at $x$. Given a subset $U$ of a topological space, we write $\overline{U}$ to denote its topological closure.

\subsubsection{Inversions, sphericalizations, and M\"obius maps}

We say that a metric space $X$ is {\em invertible} provided it is unbounded and it admits an {\em inversion} at some point $p\in X$. That is, there exists a homeomorphism $\tau_p:X_p\to X_p$ such that, for $x,y\in X_p$, we have
\[d(\tau_p(x),\tau_p(y))=\frac{d(x,y)}{d(x,p)d(y,p)}.\]

Inversions are closely related to the concept of the \textit{inverted space} of $X$ at $p$, denoted as $\Inv_p(X)$. This inverted space $\Inv_p(X)$ is given by $(\hat{X}_p,i_p)$, where $i_p$ is the quasi-distance defined by
\begin{equation}\label{def:inv}
i_p(x,y):=\frac{d(x,y)}{d(x,p)d(y,p)}  \text{ \quad and \quad} i_p(x,\infty):=\dfrac{1}{d(x,p)},  \quad\text{ for }  x,y\in {X}_p.\end{equation}
Here we use the term \textit{quasi-distance} to describe a positive definite and symmetric function $\delta$ on a product $Z\times Z$ such that, for any $x,y,z\in Z$, we have $\delta(x,y)\leq C(\delta(x,z)+\delta(z,y))$; see \cite[Section 3.8]{BHX-inversions} for further discussion of the quasi-distance $i_p$. Quasi-distances are sometimes referred to as \textit{quasi-metrics} in the literature; thus we refer to $\Inv_p(X)$ as a quasi-metric space. 

Following {\cite{Foertsch-Schroeder}}, we say that a metric space $X$ is a {\em Ptolemy space} if \textit{Ptolemy's inequality} holds. That is, for all $w\in X$, the function $i_p$ from \eqref {def:inv} is a distance; i.e., it satisfies the triangle inequality (cf.\,\cite[Remark 2.6]{Foertsch-Schroeder}). Observe that an inversion $\tau_p$ is an isometry from $\Inv_p(X)$ onto $X$, where we take $\tau_p(\infty)=p$. In particular, the existence of an inversion on $X$ implies that $X$ is a Ptolemy space. 

Given a point $p\in X$, and $x,y\in X$, we define
\begin{equation}
s_p(x,y):=\frac{d(x,y)}{(1+d(x,p))(1+d(y,p))} 
\text{ \quad and \quad}
s_p(x,\infty) := \frac{1}{1+d(x,p)},\end{equation} 
and we call $\Sph_p(X)=(\hat X, s_p)$ the {\em sphericalized space} of $X$ at $p$. In general, the function $s_p$ is a quasi-distance. The topology induced by $s_p$ on $\hat{X}$ agrees with the one-point compactification of $X$ when $X$ is non-compact and proper. As in the case of $\Inv_p(X)$, it is straightforward to verify that when $X$ is a Ptolemy space the function $s_p$ satisfies the triangle inequality and so $\Sph_p(X)$ is a metric space. 

A homeomorphism $f:X\to X$ between (quasi-)metric spaces is said to be {\em M\"obius} provided that, for all quadruples $(a,b,c,d)$ of distinct points in $X$, we have 
\[\frac{d (f(a),f(c))\, d (f(b),f(d))}{d (f(a),f(d))\, d (f(b),f(c))}
=\frac{d (a,c) \,d (b,d)}{d (a,d) \,d (b,c)}.\]
We denote the group of all M\"obius self-homeomorphisms of a space $X$ with the notation $\Mob(X)$. We remark that, for any $p\in X$, we have
 $$\Mob(  X\cup\{\infty\}) = \Mob(\Sph_p(X))=\Mob(\Inv_p(X)\cup\{p\}).$$
 A metric space $X$ is   \textit{$2$-point  M\"obius homogeneous} if for every   two pairs $\{x,y\}$ and $\{a,b\}$ of distinct points in $X$ there exists a  M\"obius self-homeomorphism of $X$ for which $f(x)=a$ and $f(y)=b$.

\subsubsection{Quasi-inversions, quasi-sphericalizations, quasi-dilations, and quasi-M\"obius maps.}\label{sec:coarse:term}

In the sequel we shall use the symbol $a\simeq_L b$ to mean $ {L}^{-1} b \leq a \leq L  b$ for some constant $L\geq1$. 

A homeomorphism $f:X\to Y$ is \textit{$L$-bi-Lipschitz}, for some $L\geq1$, if, for any $x,y\in X$, we have $d(f(x),f(y))\eqx_Ld(x,y)$. We say that a metric space $X$ is {\em uniformly bi-Lipschitz homogeneous} if there exists $L\geq 1$ such that for any $x,y\in X$, there exists an $L$-bi-Lipschitz homeomorphism $f:X\to X$ for which $f(x)=y$. 

We say that a metric space $X$ is {\em quasi-invertible} provided it admits a {\em quasi-inversion} at some point $p\in X$. That is, there exists $L\geq1$ and a homeomorphism $\sigma_p:X_p\to X_p$ such that, for $x,y\in X_p$, we have
\[d(\sigma_p(x),\sigma_p(y))\simeq_L\frac{d(x,y)}{d(x,p)d(y,p)}.\]

Quasi-inversions $\sigma_p$ are closely related to the notion of the \textit{quasi-inverted space of $X$ at $p$}, denoted by $\inv_p(X)$, which is the metric space $(\hat{X}_p,d_p)$, where $d_p$ is a distance satisfying
\begin{equation}\label{E:inverted}
\frac{1}{4}i_p(x,y)\leq d_p(x,y)\leq i_p(x,y).
\end{equation}
See \cite{BHX-inversions} or \cite{LS15} for the construction of such a distance (this notion is referred to as \textit{flattening} in \cite{LS15}). This distance can be continuously extended to $\hat{X}_p$, and one can use the triangle inequality to verify that, for any point $x\in X$, one has $d_p(x,\infty)=1/d(x,p)$. 

The \textit{quasi-sphericalized space of $X$ at $p$} is denoted by $\sph_p(X)$. This is the metric space $(\hat{X},\hat{d}_p)$, where $\hat{d}_p$ is a distance satisfying
\begin{equation}\label{E:sphere_dist}
\frac{1}{4}s_p(x,y)\leq \hat{d}_p(x,y)\leq s_p(x,y). 
\end{equation}
We again refer the reader to \cite{BHX-inversions} or \cite{LS15} for the construction of such a distance. This distance can be continuously extended to $\hat{X}$ such that, for $x\in X$, we have $\hat{d}_p(x,\infty)=1/(1+d(x,p))$.
 
Given $q\in X$, $\lambda>0$, and $L\geq1$, a homeomorphism  $f:(X,d)\to (X,d)$ is said to be  a  {\em  $(\lambda,L)$-quasi-dilation} at 
$q$ provided that $f (q)=q$ and, for all $x,y\in X$,
\[d(f(x),f(y))\eqx_L \lambda\,d(x,y).\]
In particular, a $(\lambda,1)$-quasi-dilation is a {\em dilation  of factor} $\lambda$. A metric space   $ X$ is  {\em uniformly quasi-dilation invariant} at $q$ provided that there exists $L\geq 1$ such that for all $\lambda >0$ the space $X$ admits a  $(\lambda,L)$-quasi-dilation at $q$.

Given a homeomorphism $\theta:[0,+\infty)\to[0,+\infty)$, a homeomorphism $f:X\to Y$ between metric spaces is said to be a {\em $\theta$-quasi-M\"obius map} provided that, for all quadruples of distinct points $a,b,c,d\in X$, we have
\[\frac{d (f(a),f(c))\, d (f(b),f(d))}{d(f(a),f(d))\, d (f(b),f(c))}
\leq \theta\left(\frac{d (a,c) \,d(b,d)}{d (a,d) \,d (b,c)}\right).\]
When there exists $C\geq1$ such that  $\theta(t)=Ct$, then we say that $f$ is a \textit{$C$-strongly quasi-M\"obius} map. A metric space $X$ is said to be \textit{$2$-point uniformly strongly quasi-M\"obius homogeneous} provided there exists $C\geq1$ such that, given any two pairs $\{x,y\}$ and $\{a,b\}$ of distinct points in $X$, there exists a $C$-strongly quasi-M\"obius self-homeomorphism of $X$ for which $f(x)=a$ and $f(y)=b$.

\subsubsection{Additional terminology}\label{S:additional}

Given any metric space $(X,d)$ and $\alpha\in(0,1]$, the \textit{$\alpha$-snowflake of $(X,d)$} is the metric space $(X,d^{\alpha})$. A metric space $(X,d)$ is called an \textit{$\alpha$-snowflake} if it is isometric to an $\alpha$-snowflake of a metric space, or, equivalently, if $d^{1/\alpha}$ satisfies the triangle inequality. When we want to emphasize that $\alpha<1$, we say that $X$ is a \textit{non-trivial} snowflake. 

Given $L\geq1$, a space $X$ is said to be \textit{$L$-quasi-convex} if, for any two points $x,y\in X$, there exists a rectifiable curve $\gamma$ joining $x$ to $y$ satisfying $\textrm{Length}(\gamma)\leq L\,d(x,y)$. Such a curve $\gamma$ is said to be an \textit{$L$-quasiconvex curve}. Given $z\in X$, if every pair of points in the annulus $A(z;r,2r)$ can be joined by an $L$-quasi-convex curve contained in $A(z;r/L,2Lr)$, then we say that $X$ is \textit{$L$-annularly quasi-convex at $z$}. If $X$ is $L$-annularly quasi-convex at every point, we say that $X$ itself is $L$-annularly quasi-convex. 
We remark that if a space is annularly quasi-convex, then it is linearly locally connected (in the sense of \cite{BK-spheres}) and it is quasi-convex. Moreover, every proper quasi-convex space is bi-Lipschitz equivalent to a geodesic space.

A metric space $X$ is said to be \textit{linearly locally connected} if there exists a constant $C\geq1$ such that the following two conditions are satisfied:
\begin{itemize}
	\item[$(\textrm{LLC}_1)$]{For any $x\in X$, $r>0$, and points $u,v\in B(x;r)$, there exists a continuum $E\subset B(x;Cr)$ containing $u$ and $v$.}
	\item[$(\textrm{LLC}_2)$]{For any $x\in X$, $r>0$, and points $u,v\in X\setminus B(x;r)$, there exists a continuum $E\subset X\setminus B(x;r/C)$ containing $u$ and $v$.}
\end{itemize}

We say that $(X,d)$ is \textit{$C$-uniformly perfect}, for some $C\geq1$, provided that, for every $x\in X$ and $r>0$ such that $B(x;r)\subsetneq X$, we have $B(x;r)\setminus B(x;r/C)\not=\emptyset$. 

\section{Isometric Homogeneity and Invertibility}\label{S:isometric}

In this section we prove \rf{T:sharp}. We begin with a discussion of relevant definitions in connection with certain isometrically homogeneous metric spaces and metric Lie groups, respectively. 

\subsection{Isometrically homogeneous metric spaces}\label{s:ihms}
In this subsection, we prove a few useful results about metric spaces that are both isometrically homogeneous and invertible. 

\begin{proposition}\label{P:equivalence1a}
If $X$ is isometrically homogeneous and invertible, then
\begin{enumerate}
\item{for any $p\in X$, the space $\Sph_p(X)$ is 2-point M\"obius homogeneous, and}
\item{for any $x,y\in X$, the space $X$ admits a dilation of factor $d(x,y)^2$ at $p$.}
\end{enumerate} 
\end{proposition}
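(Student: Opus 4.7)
The plan is to handle the two parts in succession. For (1), I would show the Möbius group of $\hat X$—which agrees with the Möbius group of $\Sph_p(X)$ since sphericalization preserves Möbius structure—acts $2$-transitively on $\hat X$. The two classes of generators available are the isometries of $X$, which extend to Möbius self-maps fixing $\infty$, and the inversion $\tau_p$, which extends to a Möbius map swapping $p$ and $\infty$. Conjugating $\tau_p$ by an isometry $g$ with $g(p)=q$ produces an inversion $\tau_q=g\tau_pg^{-1}$ at any desired point $q\in X$. Given an arbitrary pair $(a,b)$ of distinct points in $\hat X$, I would reduce to the canonical pair $(\infty,p)$ as follows: if $a\in X$, apply an isometry to send $a$ to $p$ and then apply $\tau_p$ to push $a$ to $\infty$, obtaining a pair $(\infty,b')$ with $b'\in X$; then use another isometry to send $b'$ to $p$. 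Symmetric manipulations handle the cases where $a$ or $b$ is already $\infty$. Composing such a reduction with the inverse of the analogous reduction for $(c,d)$ yields the desired Möbius map.

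For (2), I plan to first reduce by isometric homogeneity to the case $x=p$: given $x,y\in X$ with $x\neq y$, choose an isometry $g$ with $g(x)=p$; then $d(p,g(y))=d(x,y)$, so it suffices to construct, for every $\bar y\in X\setminus\{p\}$, a dilation at $p$ of factor $d(p,\bar y)^2$. For this I would consider the explicit composition
\[
\sigma \;:=\; \tau_p\circ\tau_{\bar y}\circ\tau_{p'}, \qquad p':=\tau_{\bar y}(p),
\]
where $\tau_{\bar y}$ and $\tau_{p'}$ are the inversions at $\bar y$ and $p'$ produced from $\tau_p$ by isometric conjugation. The key claim, which I would verify by a direct telescoping calculation using repeatedly the identity $d(\tau_q(a),q)=1/d(a,q)$ together with $d(\tau_q(a),r)=d(a,\tau_q(r))\,d(q,r)/d(a,q)$ for $r\neq q$, is that
\[
d(\sigma(a),\sigma(b)) \;=\; \frac{d(a,b)}{d(p,\bar y)^2} \quad \text{for all } a,b\in X.
\]
Thus $\sigma$ is a similarity of $X$ with factor $1/d(p,\bar y)^2$.

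To finish, I would choose an isometry $L$ of $X$ with $L(\sigma(p))=p$, guaranteed by isometric homogeneity; then $L\circ\sigma$ is a similarity of the same factor that additionally fixes $p$, hence a dilation at $p$ of factor $1/d(p,\bar y)^2$. Its inverse is then a dilation at $p$ of factor $d(p,\bar y)^2=d(x,y)^2$, completing (2). The main obstacle is the telescoping computation for $\sigma$: one must track the auxiliary point $p'=\tau_{\bar y}(p)$ and the distances it introduces through three successive inversions. The cleanest route is to view each $\tau_q$ as the Ptolemy isometry from $X$ to $\Inv_q(X)$ (first deriving that $\tau_q$ is an involution satisfying $d(\tau_q(a),q)=1/d(a,q)$, using the inversion axiom and the Ptolemy structure forced by the existence of an inversion), after which each step of the composition becomes a transparent change of base point and the cancellations leading to the factor $1/d(p,\bar y)^2$ become immediate.
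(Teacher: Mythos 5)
Your part (1) is exactly the paper's argument: reduce an arbitrary pair to $(\infty,p)$ by an isometry followed by $\tau_p$, then compose two such reductions. Your part (2) is also structurally the paper's proof, namely a telescoping product of three inversions interleaved with isometries; the paper writes the map as $f_3\circ\tau_p\circ f_2\circ\tau_p\circ f_1\circ\tau_p$ with a single $\tau_p$ and correcting isometries $f_i$, while you conjugate $\tau_p$ to get inversions at moving base points and compose those. Up to bookkeeping these are the same computation, and your final normalization (post-compose with an isometry to refix $p$, then pass to the inverse to turn the factor $d(p,\bar y)^{-2}$ into $d(p,\bar y)^{2}$) is fine.

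The one genuine flaw is the claim that $\tau_q$ is an involution, which your construction uses in two places: in setting $p':=\tau_{\bar y}(p)$ (you need $\tau_{\bar y}(p')=p$ for the middle step of the telescoping to close) and in the identity $d(\tau_q(a),r)=d(a,\tau_q(r))\,d(q,r)/d(a,q)$. This cannot be derived from the definition and is false in general: if $\tau_p$ is an inversion at $p$ and $h$ is any isometry fixing $p$, then $h\circ\tau_p$ is again an inversion at $p$, and for $X=\msf{R}^2$, $\tau_0(z)=z/|z|^2$, and $h$ a rotation by $\theta$, one computes $(h\circ\tau_0)^2=$ rotation by $2\theta\neq\operatorname{id}$. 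The repair is immediate: take $p':=\tau_{\bar y}^{-1}(p)$ instead (note that $\tau_q^{-1}$ is again an inversion at $q$ and satisfies $d(\tau_q^{-1}(u),q)=1/d(u,q)$), and replace your identity by $d(\tau_q(a),r)=d(a,\tau_q^{-1}(r))\,d(q,r)/d(a,q)$. With these substitutions the telescoping does yield $d(\sigma(a),\sigma(b))=d(a,b)/d(p,\bar y)^2$ as you claim. The auxiliary fact $d(\tau_q(a),q)=1/d(a,q)$, on which both you and the paper's computation rely, is correct and is precisely what the paper packages as the observation that $\tau_q$ extends to an isometry from $\Inv_q(X)$ onto $X$ with $\tau_q(\infty)=q$; your plan to establish that identity (rather than involutivity) is the right one.
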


\begin{proof}
To prove (1), let $\tau_p$ be an inversion at some $p\in X$. We show that every point $(a,b)\in( \Sph_p(X)\times\Sph_p(X))\setminus\Delta$ can be mapped to $(\infty, p)$. Here $\Delta\subset\Sph_p(X)\times\Sph_p(X)$ denotes the diagonal. If $a=\infty $, then one uses an element in $\Isom(X)\subset\Mob(X)\subset\Mob(\Sph_p(X))$ mapping $b$ to $p$. If $a\neq\infty $, then one first uses an element in $\Isom(X)$ mapping $a$ to $p$, then the map $\tau_p\in\Mob(\Sph_p(X))$, to be back in the case $a=\infty$.
 
To prove (2), let $\tau_p$ denote an inversion at $p\in X$. Choose $x\in X_p$, and define the map $g:X\to X$ as $g:=f_3\circ\tau_p\circ f_2\circ \tau_p\circ f_1\circ \tau_p$. Here $f_1:X\to X$ is an isometry such that $f_1(p)=x$, $f_2: X\to X$ is an isometry such that $f_2(\tau_p(x))=p$, and $f_3: X\to X$ is an isometry such that $f_3(\tau_p(f_2(p)))=p$. We then observe that $g(p)=p$, and that, for any $a,b\in X$, we have
\begin{align*}
	d(g(a),g(b))&=\frac{d(\tau_p(f_1(\tau_p(a))),\tau_p(f_1(\tau_p(b))))}{d(f_2(\tau_p(f_1(\tau_p(a)))),p)\,d(f_2(\tau_p(f_1(\tau_p(b)))),p)}\\
	&=\frac{d(\tau_p(f_1(\tau_p(a))),\tau_p(f_1(\tau_p(b))))}{d(f_2(\tau_p(f_1(\tau_p(a)))),f_2(\tau_p(x)))\,d(f_2(\tau_p(f_1(\tau_p(b)))),f_2(\tau_p(x)))}\\
	&=\frac{d(\tau_p(a),\tau_p(b))}{d(f_1(\tau_p(a)),p)\,d(f_1(\tau_p(b)),p)}\cdot\frac{d(f_1(\tau_p(a)),p)\,d(x,p)}{d(f_1(\tau_p(a)),x)}\cdot\frac{d(f_1(\tau_p(b)),p)\,d(x,p)}{d(f_1(\tau_p(b)),x)}\\
	&=\frac{d(\tau_p(a),\tau_p(b))\,d(x,p)^2}{d(f_1(\tau_p(a)),f_1(p))\,d(f_1(\tau_p(b)),f_1(p))}\\
	&=\frac{d(a,b)\,d(x,p)^2}{d(a,p)\,d(b,p)}\cdot d(a,p)\,d(b,p)\\
	&=d(x,p)^2\,d(a,b)
\end{align*}
Thus $g:(X,d)\to(X,d)$ is a dilation of factor $d(x,p)^2$ at $p$. 
\end{proof}

In the following lemma, we say that a bijection $h:X\to X$ is a \textit{similarity}  provided that there exists $\lambda>0$ such that, for any $x,y\in X$, we have $d(f(x),f(y))=\lambda\,d(x,y)$.
Hence, within this paper, the difference between a similarity and a dilation is that the latter requires the presence of a fixed point while the former does not. Although the following result is contained in the proof of Proposition~\ref{general_prop}, it is included to provide a convenient reference. For a similar result, the reader is pointed to \cite[Proposition 2.4]{PS17}. 

\begin{lemma}\label{L:similarity}
Suppose $X$ and $Y$ are unbounded. If $h:X\to Y$ is a M\"obius homeomorphism such that both $h$ and $h^{-1}$ send bounded sets to bounded sets, then $h:X\to Y$ is a similarity. 
\end{lemma}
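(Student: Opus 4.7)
The strategy is to exploit the M\"obius cross-ratio identity by sending one point to infinity along a divergent sequence, using the fact that the ratio of distances from a fixed pair of points to a far-away point must tend to $1$. First I would fix three distinct points $a,b,d \in X$ and choose a sequence $(c_n) \subset X$ with $d(a,c_n) \to \infty$, which is available because $X$ is unbounded. The assumption that $h^{-1}$ sends bounded sets to bounded sets guarantees that $d(h(a),h(c_n)) \to \infty$: otherwise a subsequence of $\{h(c_n)\}$ would remain bounded, and applying $h^{-1}$ to this bounded set would produce a bounded subset of $X$ containing the corresponding subsequence of $(c_n)$, contradicting $d(a,c_n)\to\infty$. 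The triangle inequality then forces
\[
\frac{d(a,c_n)}{d(b,c_n)} \to 1 \qquad \text{and} \qquad \frac{d(h(a),h(c_n))}{d(h(b),h(c_n))} \to 1.
\]
Plugging these limits into the M\"obius cross-ratio identity for the quadruple $(a,b,c_n,d)$ and passing to the limit yields
\[
\frac{d(h(b),h(d))}{d(h(a),h(d))} \;=\; \frac{d(b,d)}{d(a,d)}
\]
for every triple of distinct points $a,b,d \in X$.

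Next I would rewrite this identity in the equivalent form
\[
\frac{d(h(a),h(d))}{d(a,d)} \;=\; \frac{d(h(b),h(d))}{d(b,d)}.
\]
For a fixed $d\in X$, this shows that the quantity $d(h(a),h(d))/d(a,d)$ does not depend on the auxiliary point $a$ (distinct from $d$); denote this common value by $\mu(d)$. Interchanging the roles of $a$ and $d$ and using the symmetry of the metric gives $\mu(d)=\mu(a)$ for any distinct pair, so $\mu$ is a positive constant $\lambda$. Consequently $d(h(x),h(y)) = \lambda\, d(x,y)$ for all $x,y \in X$, establishing that $h$ is a similarity.

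The crux of the plan is securing the two limits displayed above; the rest of the argument is a routine manipulation of cross-ratios. It is precisely the boundedness-preservation of $h^{-1}$ that prevents M\"obius distortion from concentrating $h(c_n)$ near some finite point of $Y$ while $c_n$ escapes in $X$. Without this hypothesis one could not rule out a drastic failure of the naive limit behavior, which is why the assumption appears unavoidable for an argument of this form.
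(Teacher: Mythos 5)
Your argument is correct. The one point that genuinely needs the hypotheses is that $d(h(a),h(c_n))\to\infty$ whenever $d(a,c_n)\to\infty$, and you justify it properly from the assumption that $h^{-1}$ sends bounded sets to bounded sets; the rest (the triangle-inequality limits $d(a,c_n)/d(b,c_n)\to 1$ and the symmetry argument showing $\mu(d)$ is a global constant) is routine and sound. Note in passing that your proof never uses the forward hypothesis that $h$ sends bounded sets to bounded sets; that hypothesis is in fact redundant, since it follows a posteriori once $h$ is known to be a similarity.

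The underlying idea is the same as the paper's --- degenerate the cross-ratio by letting one of the four points escape to infinity, so that the four-point identity collapses to a statement about ratios of distances of triples --- but the implementation differs. The paper does not take a limit at all: it extends $h$ to a M\"obius homeomorphism $\Sph_p(X)\to\Sph_q(Y)$ of the sphericalized spaces fixing the added point $\infty$, and then evaluates the (extended) cross-ratio on quadruples containing $\infty$ literally, using the quasi-distance $s_p$ and the factors $(1+d(\cdot,p))$ to translate back to $d$. That version is set up to run verbatim in the quasi-M\"obius setting of Proposition~\ref{general_prop}, where all identities become comparabilities with controlled constants. Your sequential version is more elementary and self-contained for the exact M\"obius case, since it avoids introducing the sphericalization and its extension lemma; the price is that it degrades less gracefully to the strongly quasi-M\"obius setting, where one would have to track how the multiplicative constants survive the limit rather than reading off a single clean identity.
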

 
\begin{remark}\label{R:similarity}
As consequence of \rf{L:similarity} we note that, for unbounded spaces $X$ and $Y$, any M\"obius homeomorphism $h:\Sph_p(X)\to\Sph_{q}(Y)$ fixing $\infty$ is a similarity map from $X$ to $Y$. Indeed, $\Sph_p(X)\setminus\{\infty\}$ and $\Sph_q(Y)\setminus\{\infty\}$ are M\"obius equivalent to $X$ and $Y$, respectively. Therefore, $h:X\to Y$ is M\"obius and both $h$ and $h^{-1}$ send bounded sets to bounded sets.
\end{remark}

\begin{proposition} \label{P:equivalence1b}
Suppose $(X,d)$ is an unbounded and complete metric space. If, for some $p\in X$, the space $\Sph_p(X)$ is $2$-point M\"obius homogeneous, then the space $(X,d)$ is isometrically homogeneous and, for some $c>0$, the space $(X,c\,d)$ is  invertible. 
\end{proposition}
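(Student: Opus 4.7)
My plan is to treat invertibility and isometric homogeneity separately, both driven by 2-point M\"obius homogeneity of $\Sph_p(X)$ together with the similarity-detection criterion of Lemma~\ref{L:similarity}/Remark~\ref{R:similarity}; the isometric homogeneity part will additionally require the Banach fixed point theorem, where completeness of $X$ enters.

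For invertibility, I would apply 2-point M\"obius homogeneity of $\Sph_p(X)$ to the pairs $(p,\infty)$ and $(\infty,p)$ to produce a M\"obius self-map $\sigma$ of $\Sph_p(X)$ swapping $p$ and $\infty$. Cross-ratio invariance under $\sigma$, tested on the quadruples $(x,y,p,\infty)$ and $(x,\infty,y,p)$, collapses (after the universal cancellations of the factors $1+d(\cdot,p)$ coming from $s_p$) to the two identities
\[
d(\sigma(x),p)\,d(x,p) \;=\; c^{2}
\qquad\text{and}\qquad
d(\sigma(x),\sigma(y)) \;=\; \frac{c^{2}\,d(x,y)}{d(x,p)\,d(y,p)}
\]
for some constant $c>0$ and all $x,y\in X_p$. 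Passing to $d':=d/c$, the second identity becomes the exact inversion relation for $(X,d')$, so $\sigma|_{X}$ is an inversion at $p$ on $(X,d')$; this establishes invertibility of $(X,c''\,d)$ with $c''=1/c$.

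For isometric homogeneity, I would first use 2-point M\"obius homogeneity on pairs $(a,\infty)$ and $(b,\infty)$ with $a,b\in X$ to obtain a M\"obius self-map of $\Sph_p(X)$ fixing $\infty$ and sending $a$ to $b$; by Remark~\ref{R:similarity} this restricts to a similarity of $X$, so the group of similarities of $X$ acts transitively on $X$. Given $x,y\in X$, pick such a similarity $f$ with $f(x)=y$ of ratio $\lambda>0$. When $\lambda=1$, $f$ is already the desired isometry. When $\lambda\neq 1$, exactly one of $f,f^{-1}$ is a strict contraction of the complete metric space $X$, and by the Banach fixed point theorem admits a unique fixed point $z\in X$, which is therefore fixed by both $f$ and $f^{-1}$. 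Using similarity transitivity again, choose a similarity $k$ with $k(z)=y$: then $g:=k\circ f^{-1}\circ k^{-1}$ fixes $y$ and has ratio $1/\lambda$ (since the ratio is invariant under conjugation in the abelian group $(\mathbb{R}_{>0},\cdot)$), so $g\circ f$ is a similarity of ratio $1$ sending $x$ to $y$, i.e.\ an isometry.

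I see the main obstacle as this last step: the passage from similarity homogeneity to isometric homogeneity seems to genuinely require completeness, since a priori we have no control on the ratios realized by the similarities produced by 2-point M\"obius homogeneity. The Banach fixed point theorem supplies the missing ingredient by guaranteeing, for any similarity of non-unit ratio, a point of $X$ at which we can anchor the conjugation that inverts the ratio. The remainder of the argument is cross-ratio bookkeeping which should be routine but must be executed with care so that the $s_p$-factors of the form $1+d(\cdot,p)$ telescope correctly when the chosen quadruples meet $\infty$.
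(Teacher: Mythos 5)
Your proposal is correct and follows essentially the same route as the paper: similarity transitivity of $X$ obtained from M\"obius maps of $\Sph_p(X)$ fixing $\infty$ via Remark~\ref{R:similarity}, the Banach fixed point theorem (where completeness enters) to anchor a conjugation $g\circ f^{-1}\circ g^{-1}\circ f$ of ratio $1$, and a cross-ratio computation for a M\"obius map swapping $p$ and $\infty$ yielding $d(\sigma(x),p)\,d(x,p)=\mathrm{const}$ and the inversion relation after rescaling by $1/c$. The only cosmetic difference is that you extract the two identities from explicitly chosen quadruples, whereas the paper telescopes the $s_p$-factors directly; the content is identical.
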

 
\begin{proof}
We first prove that $X$ is isometrically homogeneous. By \rf{R:similarity}, every M\"obius map $h:\Sph_p(X)\to\Sph_p(X)$ fixing $\infty$ yields a map $h: X \to X $ that is a $\lambda$-similarity for some $\lambda=\lambda(h)>0$. Therefore, given any $x,y\in X$, our assumptions on $\Sph_p(X)$ ensure the existence of a $\lambda$-similarity $h:X\to X$ such that $h(x)=y$. If $\lambda=1$ we have an isometry of $X$ sending $x$ to $y$. If $\lambda\neq 1$, then, since $X$ is complete, by the Banach Fixed Point Theorem, there exists $o\in X$ such that $h(o)=o$. Again invoking our assumptions on $\Sph_p(X)$ and \rf{R:similarity}, there exists a $\mu$-similarity $g:X \to X$ such that $g(o)=y$. Here $\mu=\mu(g)>0$. We claim that the composition $g\circ  h^{-1} \circ g^{-1}\circ  h$ is an isometry of $X$ that sends $x$ to $y$. Indeed, such a map is a similarity of factor $ \mu\lambda^{-1}\mu^{-1}\lambda=1$, and 
\[g(  h^{-1} ( g^{-1}(  h(x))))= g(  h^{-1} ( g^{-1}(  y)))= g(  h^{-1} ( o))= g(  o)=y.\]
Since $x,y\in X$ were arbitrary, it follows that $X$ is isometrically homogeneous. 

Next, we prove that $X$ admits an inversion, up to a rescaling of its distance. To this end, let $f:\Sph_p(X)\to\Sph_p(X)$ denote a M\"obius map such that $f(p)=\infty$ and $f(\infty)=p$. Then, for any $a,b\in X_p$ such that $a\not=b$, we have
\begin{align*}
d(p,f(a))&=s_p(p,f(a))(1+d(p,f(a)))\\
&=s_p(p,f(a))s_p(\infty,f(b))\frac{(1+d(p,f(a)))}{s_p(\infty,f(b))}\\
&=\frac{s_p(\infty,a)s_p(p,b)}{s_p(\infty,b)s_p(a,p)}\frac{1+d(p,f(a))}{s_p(\infty,f(b))}s_p(p,f(b))s_p(f(a),\infty)\\
&=\frac{1}{d(a,p)}\frac{(1+d(p,f(a)))(1+d(a,p))(1+d(b,p))(1+d(f(b),p))}{(1+d(p,f(a)))(1+d(a,p))(1+d(b,p))(1+d(f(b),p))}d(p,b)d(p,f(b))\\
&=\frac{d(p,b)d(p,f(b))}{d(a,p)}.
\end{align*}
Since the above equalities hold for any $b\not=a$ in $X$, we conclude that there exists a constant $r>0$ such that, for any $b\in X$, we have 
\begin{equation}\label{E:iso_scale}
d(f(b),p)=r\cdot d(b,p)^{-1}.
\end{equation}
Now let $a,b\in X_p$ be such that $a\not=b$. Using the same function $f$ as above, we observe that
\begin{align*}
d(f(a),f(b))&=s_p(f(a),f(b))(1+d(f(a),p))(1+d(f(b),p))\\
&=s_p(f(a),f(b))s_p(f(p),f(\infty))(1+d(f(a),p))(1+d(f(b),p))\\
&=\frac{s_p(a,b)s_p(p,\infty)}{s_p(a,\infty)s_p(b,p)}s_p(f(a),p)s_p(f(b),\infty)(1+d(f(a),p))(1+d(f(b),p))\\
&=\frac{d(a,b)d(f(a),p)}{d(b,p)}\frac{(1+d(f(a),p))(1+d(f(b),p))(1+d(a,p))(1+d(b,p))}{(1+d(f(a),p))(1+d(f(b),p))(1+d(a,p))(1+d(b,p))}\\
&=\frac{d(a,b)d(f(a),p)}{d(b,p)}=\frac{r\cdot d(a,b)}{d(a,p)d(b,p)}.
\end{align*}
Here we note that the final equality follows from \rf{E:iso_scale}. 

Set $c:= 1/\sqrt{r}$. Therefore the last formula becomes 
\begin{align*}
c\,d(f(a),f(b))&=\tfrac{1}{\sqrt{r}}\,d(f(a),f(b))=\sqrt{r}  \frac{d(a,b)}{d(a,p)d(b,p)}\\
&=\frac{\tfrac{1}{\sqrt{r}}\,d(a,b)}{\tfrac{1}{\sqrt{r}}\,d(a,p)\tfrac{1}{\sqrt{r}}\,d(b,p)}=\frac{c\, d(a,b)}{c\, d(a,p)\,c\, d(b,p)}.
\end{align*}
Therefore, $f $ satisfies the definition of an inversion for $(X,c\, d)$.
\end{proof}

\subsection{Metric lie groups}\label{s:metric_Lie}
For the purposes of this paper, we refer to Lie groups equipped with left-invariant distance functions that induce the manifold topology as \textit{metric Lie groups}. Thus our terminology aligns with that of \cite{CKLDGO17}. Important examples of metric Lie groups are provided by groups referred to as \textit{generalized Heisenberg groups} (as in \cite{Freeman-invertible}) or \textit{$\mfK$-Heisenberg groups} (as in \cite{PS17}). Here $\mfK$ denotes a real normed division algebra: either the real numbers $\mfR$, the complex numbers $\mfC$, the quaternions $\mfH$, or the octonions $\mfO$. These groups can be defined as follows. 

\begin{itemize}
  \item{Given $n\in\msf{N}$, the $n$-th $\mfR$-Heisenberg group, or a \textit{real Heisenberg group} $\mathbb H_\mfR^n$, is $\mfR^n$.}
  \item{Given $n\in\msf{N}$, the $n$-th $\mfC$-Heisenberg group, or a \textit{complex Heisenberg group} $\mathbb H_\mfC^{n}$, is the Carnot group with step two real Lie algebra $\mathfrak{n}=\mathfrak{v}\oplus \mathfrak{z}$, where $\mathfrak{v}:=\Span\{X_i,Y_i:1\leq i\leq n\}$ and $\mathfrak{z}:=\Span\{Z\}$. Equip $\mathfrak{n}$ with an inner product such that $\{X_i,Y_i,Z:1\leq i\leq n\}$ is an orthonormal basis. The only non-trivial bracket relations are $[X_i,Y_i]=Z$, for $1\leq i\leq n$.} 
  \item{Given $n\in\msf{N}$, the $n$-th $\mfH$-Heisenberg group, or a \textit{quaternionic Heisenberg group} $\mathbb H_\mfH^{n}$, is the Carnot group with step two real Lie algebra $\mathfrak{n}=\mathfrak{v}\oplus\mathfrak{z}$, where $\mathfrak{v}=\Span\{X_i,Y_i,V_i,W_i:1\leq i \leq n\}$ and $\mathfrak{z}=\Span\{Z_k:1\leq k \leq 3\}$. Equip $\mathfrak{n}$ with an inner product such that $\{X_i,Y_i,V_i,W_i,Z_k:1\leq i\leq n, 1\leq k \leq 3\}$ is an orthonormal basis. For $1\leq i \leq n$, the only nontrivial bracket relations are $[X_i,Y_i]=Z_1=[V_i,W_i]$, $[X_i,V_i]=Z_2=[W_i,Y_i]$, and $[X_i,W_i]=Z_3=[Y_i,V_i]$.}
  \item{The $\mfO$-Heisenberg group, or the \textit{octonionic Heisenberg group} $\msf H_{\msf O}^{1}$, is the Carnot group with step two real Lie algebra $\mathfrak{n}=\mathfrak{v}\oplus\mathfrak{z}$, where $\mathfrak{v}=\Span\{X_i:0\leq i\leq 7\}$ and $\mathfrak{z}=\Span\{Z_k:1\leq k \leq 7\}$. Equip $\mathfrak{n}$ with an inner product such that $\{X_i,Z_k:0\leq i\leq 7, 1\leq k \leq 7\}$ is an orthonormal basis. The only nontrivial bracket relations are $[X_0,X_k]=Z_k$ for $1\leq k\leq 7$ and $[X_i,X_j]=\varepsilon_{ijk}Z_k$, for $1\leq i,j,k\leq 7$. Here $\varepsilon$ is a completely antisymmetric tensor whose value is $+1$ when $ijk=124, 137, 156, 235, 267, 346, 457$.}
\end{itemize}

For our purposes, it is sufficient to define $\mfK$-hyperbolic space via the results of \cite{CDKR98} and \cite{CDKR-Heisenberg}. In particular, we may view the $\mfK$-hyperbolic spaces as the rank-one symmetric spaces of non-compact type, and thus the $\mfK$-Heisenberg groups described above can be viewed as the boundaries at infinity of the $\mfK$-hyperbolic spaces. For more detailed information about $\mfK$-Heisenberg groups in relation to $\mfK$-hyperbolic space the reader may consult \cite{Platis-ptolemy}.

Given $(x,z)=\exp(X+Z)\in\mbb{H}_\mfK^n$, where $X\in\frak{v}$ and $Z\in\frak{z}$, we define  
\[\|(x,z)\|:=\left(\frac{|X|^4}{16}+|Z|^2\right)^{1/4}.\]
Here $|\cdot|$ denotes the norm obtained from the inner product on $\mathfrak{n}$ described above. We then define the {\em parabolic visual distance} $\rho$ on $\mH^n$ as 
\begin{equation}\label{E:Ham}
\rho((x,z),(x',z')):=\|(x',z')^{-1}(x,z)\|.
\end{equation}
This distance (or a rescaling thereof) is sometimes referred to as the \textit{Kor\'anyi-Cygan} distance, or simply the \textit{Kor\'anyi distance} (cf.\,\cite[page 18]{CDPT07}). Via the exponential map, an inversion of the metric Lie group $(\mH^n,\rho)$ is given by
\begin{equation}\label{E:sigma}
\sigma(X,Z):=-\left(\frac{|X|^2}{4}+J_Z\right)^{-1}X-\left(\frac{|X|^4}{16}+|Z|^2\right)^{-1}Z.
\end{equation}
Here $J:\mathfrak{z}\to\End(\mathfrak{v})$ is defined via the formula $\langle J_ZX,Y\rangle=\langle Z,[X,Y]\rangle$. See \cite{CDKR-Heisenberg} for a detailed treatment of the map $\sigma$. 

One of the primary theoretical tools we shall employ in the proof of \rf{T:sharp} is provided by the following version of results from \cite{Kramer-transitive}.

\begin{fact}\label{F:Kramer}
Suppose $G$ is a locally compact and $\sigma$-compact topological group acting continuously, effectively, and $2$-transitively on the sphere $\msf{S}^m$. In this case, $G$ can be given the structure of a Lie group and the identity component $G^\circ$ is simple, non-compact, and of real rank $1$. Furthermore, the action of $G$ on $\msf{S}^m$ is isomorphic to the action of $G_\mfK$ or $G_\mfK^\circ$ on the (compact) boundary at infinity of $\mfK$-hyperbolic space. Here $G_\mfK$ denotes the isometry group of $\mfK$-hyperbolic space: If $\mfK=\mfR$, then $G_\mfK=PO(n,1)$ for $m=n-1\geq1$. If $\mfK=\mfC$, then $G_\mfK=PU(n,1)\rtimes \mfZ_2$ for $m=2n-1$. If $\mfK=\mfH$, then $G_\mfK=PSp(n,1)$ for $m=4n-1$. If $\mfK=\mfO$, then $G_\mfK=F_{4(-20)}$ for $m=15$. 
\end{fact}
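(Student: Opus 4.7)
The plan is to combine Hilbert's fifth problem with the classical classification of doubly transitive Lie group actions on spheres. First I would invoke the Gleason--Montgomery--Zippin theory of locally compact transformation groups: a locally compact, $\sigma$-compact group acting continuously and effectively on a connected topological manifold is a Lie group. The delicate point is verifying that $G$ has no small subgroups, which follows from effectiveness together with the observation that a continuous action on the compact manifold $\msf{S}^m$ cannot contain arbitrarily small non-trivial subgroups; Montgomery--Zippin's ``no small subgroups'' criterion for transformation groups on manifolds then produces the Lie structure on $G$ and realizes the action as smooth.

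Next, I would pass to the identity component $G^\circ$. Since $G$ is $2$-transitive (hence transitive) on the connected manifold $\msf{S}^m$ for $m\geq 1$, a standard openness-and-closedness argument shows that $G^\circ$ also acts transitively. Fixing a base point $x_0\in\msf{S}^m$, the $2$-transitivity of $G$ implies that the stabilizer $G_{x_0}$ acts transitively on the cell $\msf{S}^m\setminus\{x_0\}$; after restriction to the identity component, one retains this property up to a finite-index subtlety, which ultimately accounts for the dichotomy between $G_\mfK$ and $G_\mfK^\circ$ appearing in the conclusion.

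The main step is then the classification of doubly transitive smooth actions on spheres, due in various forms to Tits, Borel, Wang, and others, and streamlined in the locally-compact topological setting by Kramer. This classification asserts that a connected Lie group acting doubly transitively on $\msf{S}^m$ must be, up to coverings, a simple non-compact Lie group of real rank one, whose action is equivalent to that of the isometry group of a $\mfK$-hyperbolic space on its visual boundary at infinity. Once this equivalence is in hand, identifying the four concrete families $PO(n,1)$, $PU(n,1)\rtimes\mfZ_2$, $PSp(n,1)$, and $F_{4(-20)}$ together with the sphere dimensions $m=n-1$, $2n-1$, $4n-1$, $15$ is bookkeeping using the standard dictionary between rank-one symmetric spaces and the dimensions of their ideal boundaries.

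The principal obstacle lies in the classification step, which cannot be handled by elementary means and rests on the structural theory of semisimple Lie groups, their parabolic subgroups, and the associated flag varieties. In practice this is precisely where one defers to the literature---in particular to Kramer's treatment---rather than attempting to reconstruct the argument from scratch, which is why the authors present the result as a \emph{Fact}.
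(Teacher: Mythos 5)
Your proposal takes essentially the same route as the paper: the paper's entire proof consists of citing Kramer's Theorem~3.3(a) (for the Lie structure and the identification of $G$ with $G_\mfK$ or $G_\mfK^\circ$) and his Proposition~7.1 (for the identification of the action with the standard one on the boundary $H_\mfK/B$), which is exactly where your argument ultimately defers as well. One caveat on your preliminary sketch: the claim that effectiveness alone forces ``no small subgroups'' for a continuous action on $\msf{S}^m$ is not a known theorem (it is essentially the Hilbert--Smith conjecture); what rescues the step is \emph{transitivity}, which you do have from $2$-transitivity and which is the hypothesis actually required by the Montgomery--Zippin theorem (and is built into Kramer's result), so this is an imprecision of justification rather than a gap in the proof.
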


The above fact follows immediately from \cite[Theorem 3.3(a)]{Kramer-transitive} and \cite[Proposition 7.1]{Kramer-transitive}. Indeed, by \cite[Theorem 3.3(a)]{Kramer-transitive}, we conclude that $G$ is a Lie group with simple and non-compact connected component. Furthermore, $G$ is isomorphic to either $G_\mfK$ or $G_\mfK^\circ$ for some $\mfK\in\{\msf{R},\msf{C},\msf{H},\msf{O}\}$, as described above. We also point out \cite[Proposition 7.1]{Kramer-transitive}, which affirms that the action of $G$ on $\msf{S}^m$ is the standard action of $G_\mfK$ on the boundary of its corresponding symmetric space, namely $H_\mfK/B$. Here $H_\mfK$ denotes $G_\mfK^\circ$ and $B=NAM$, where $H_\mfK=NAK$ is the Iwasawa decomposition of $H_\mfK$, $M$ is the centralizer of $A$ in $K$, and $N$ is isomorphic to $\mbb{H}^n_\mfK$.

Another tool we employ in the proof of \rf{T:sharp} is the following version of results from \cite{PS17}. Here $\rho$ denotes the parabolic visual  distance on $\mbb{H}_\mfK^n$ defined in \rf{E:Ham}. 

\begin{theorem}\label{T:rigidity}
Suppose $d$ is a metric on $\mbb{H}_\msf{K}^n$ such that both $d$ and $\rho$ induce the same topology. If $\Mob(\Sph_e(\mbb{H}_\msf{K}^n,\rho))^\circ\subset\Mob(\Sph_e(\mbb{H}_\msf{K}^n,d))$, then there exists $c>0$ and $0<\alpha\leq1$ such that $d=c\cdot \rho^\alpha$. 
\end{theorem}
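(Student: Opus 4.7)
My plan is to extract rigidity from the assumption $G := \Mob(\Sph_e(\mbb{H}_\mfK^n,\rho))^\circ \subset \Mob(\Sph_e(\mbb{H}_\mfK^n,d))$ by examining the action of three natural connected subgroups of $G$ that fix the point $\infty \in \Sph_e$: the left translations $\{L_g\}_{g \in \mbb{H}_\mfK^n}$ (which are $\rho$-isometries), the identity component $K_0$ of the compact stabilizer of $e$ in $\Isom(\mbb{H}_\mfK^n,\rho)$, and the dilations $\{\delta_s\}_{s > 0}$. By hypothesis each such element is M\"obius with respect to $d$ and fixes $\infty$, so by \rf{R:similarity} each is a similarity of $(\mbb{H}_\mfK^n,d)$ with some scale factor, and I would rigidify these factors in turn.

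Compactness of $K_0$ forces its scale factor to be identically $1$, so $K_0$ acts by $d$-isometries. The dilation scale factor $\mu(s)$ is a continuous homomorphism $\mathbb{R}_+ \to \mathbb{R}_+$, hence $\mu(s) = s^\alpha$ for some $\alpha > 0$ (positivity since $d$ and $\rho$ induce the same topology). The translation scale factor $\lambda: \mbb{H}_\mfK^n \to \mathbb{R}_+$ is a continuous character; conjugating by rotations yields $\lambda \circ R = \lambda$ for $R \in K_0$, and because $\mbb{H}_\mfK^n$ is nilpotent $\lambda$ factors through its abelianization $V$. In each of the four cases the $K_0$-action on $V$ admits no nontrivial invariant character, so $\lambda \equiv 1$; consequently $d$ is left-invariant, $K_0$-invariant, and $\delta_s$-homogeneous of degree $\alpha$.

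To exploit the inversion, I would use that $G$ acts $2$-transitively on $\Sph_e \cong \msf{S}^m$ (a standard consequence of the Iwasawa decomposition of the simple real-rank-one Lie group $G$) to produce $g_0 \in G$ with $g_0(e) = \infty$ and $g_0(\infty) = e$. Applying M\"obius invariance of the $d$-cross-ratio to the quadruple $(x,y,e,\infty)$ and exploiting the symmetry in $(x,y)$ yields $d(x,e) d(g_0(x),e) = c^2$ and $d(g_0(x),g_0(y)) = c^2 d(x,y)/(d(x,e) d(y,e))$ for some constant $c>0$. Thus $g_0$ is an inversion of $d' := d/c$ at $e$; composing with a $K_0$-rotation (a $d'$-isometry) I may replace $g_0$ by $\sigma$ itself.

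Finally I would compare $d'$ with $\rho^\alpha$. Both are left-invariant, $K_0$-invariant, $\delta_s$-homogeneous metrics of degree $\alpha$ admitting $\sigma$ as an inversion at $e$. Writing $x = \exp(X+Z)$ in Heisenberg coordinates and setting $N_{d'}(x) = d'(e,x)$, $K_0$-invariance reduces $N_{d'}$ to a function of the orbit parameters $(|X|,|Z|)$, and $\delta_s$-homogeneity then gives $N_{d'}(X,Z) = |X|^\alpha F(|Z|/|X|^2)$ on $\{|X|>0\}$. A direct computation based on \rf{E:sigma} shows that $\sigma$ preserves $u := |Z|/|X|^2$, and the inversion identity $N_{d'}(x) N_{d'}(\sigma(x)) = 1$ reduces to the algebraic equation $F(u)^2 = (1/16 + u^2)^{\alpha/2}$, whose unique positive solution $F(u) = (1/16 + u^2)^{\alpha/4}$ matches the Koranyi norm $N_\rho^\alpha$. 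Hence $d = c\,\rho^\alpha$, and the triangle inequality for $\rho^\alpha$ forces $\alpha \in (0,1]$. The main obstacle is this last step: establishing uniformly for $\mfK \in \{\msf{R}, \msf{C}, \msf{H}, \msf{O}\}$ that the $K_0$-orbits in $\mbb{H}_\mfK^n$ are indexed by $(|X|,|Z|)$ and that $\sigma$ preserves $|Z|/|X|^2$, each of which requires a case-by-case analysis using \rf{E:sigma} and the structure of the corresponding Heisenberg algebra.
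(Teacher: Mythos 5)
The paper itself does not prove this theorem from scratch: its proof consists of checking that the orientation-preserving similarities of $(\mbb{H}_\msf{K}^n,\rho)$ (used for $\mfK=\msf R$) and the inversion $\sigma$ of \rf{E:sigma} (used for the other algebras) lie in the identity component $\Mob(\Sph_e(\mbb{H}_\mfK^n,\rho))^\circ$, and then citing Theorems 1.1(a) and 1.2 of Platis--Schroeder \cite{PS17}. You instead re-derive the content of those theorems directly. Your skeleton (scale factors of rotations, translations and dilations; left-invariance and homogeneity of $d$; the inversion identity for the gauge $N_{d'}$) is the right one, but the step you yourself flag as the main obstacle genuinely fails as stated.

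Concretely: for $\mfK=\msf C$ the identity component of the two-point stabilizer of $(e,\infty)$ acts on $\mathfrak v\oplus\mathfrak z=\msf C^n\oplus\msf R$ through $U(n)$, which preserves the symplectic form defining the bracket and therefore acts \emph{trivially} on the one-dimensional center; the map $Z\mapsto -Z$ comes from complex conjugation, which lies in the non-identity component of $\Mob(\Sph_e(\mbb{H}_\msf C^n,\rho))$ and is not available to you. So the $K_0$-orbits are indexed by $(|X|,Z)$, not $(|X|,|Z|)$, your gauge only reduces to $N_{d'}=|X|^\alpha F(u)$ with $u=Z/|X|^2$ \emph{signed}, and the inversion identity yields only $F(u)F(-u)=(1/16+u^2)^{\alpha/2}$, which has many positive solutions. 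The gap is closable: symmetry of the distance gives $N_{d'}(w)=N_{d'}(w^{-1})$, and since $w^{-1}=(-X,-Z)$ lies in the $U(n)$-orbit of $(X,-Z)$ (apply $-I\in U(n)$), this forces $F(u)=F(-u)$ and hence $F(u)=(1/16+u^2)^{\alpha/4}$. Two smaller points: for $\mfK=\msf R$, $n=1$, the group $K_0$ is trivial and your ``no invariant character'' claim fails, so you must kill the translation character $\lambda$ by conjugating translations by dilations ($\delta_sL_x\delta_s^{-1}=L_{\delta_s(x)}$ forces $\lambda\circ\delta_s=\lambda$, hence $\lambda\equiv1$); and applying \rf{R:similarity} to the unknown metric $d$ presupposes that elements of $G$ carry $d$-bounded sets to $d$-bounded sets, which does not follow merely from $d$ and $\rho$ inducing the same (local) topology and needs a word.
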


\begin{proof}
We note that orientation-preserving similarity mappings of $(\mbb{H}_\msf{K}^n,\rho)$ are contained in the identity component $\Mob(\Sph_e(\mbb{H}_\msf{K}^n,\rho))^\circ$. Therefore, when $\msf{K}=\msf{R}$, we reach the desired conclusion via \cite[Theorem 1.1(a)]{PS17}. In the cases that $\msf{K}\not=\msf{R}$, we note that the inversion  $\sigma$ defined in \rf{E:sigma} is contained in $\Mob(\Sph_e(\mbb{H}_\msf{K}^n,\rho))^\circ$. Via \cite[Theorem 1.2]{PS17}, we are done.
\end{proof}

In connection with \rf{T:rigidity}, we point out that the norm utilized in the present paper to define the visual distance on $\mfK$-Heisenberg groups  differs slightly from the norm defined in \cite[page 358]{PS17}. This is due to a different choice of coordinates for $\mH^n$. Nevertheless, up to corresponding alterations in the definition of the canonical inversion map (compare \cite[page 363]{PS17} with \rf{E:sigma}), the proofs of \cite{PS17} yield \rf{T:rigidity}. 

\begin{remark}
Suppose $X$ is a proper and connected metric space. As a consequence of \cite[Theorem 1.4]{CKLDGO17}, we find that if the action of $\Mob(X)$ on $X$ is transitive and not proper, then $X$ has the structure of self-similar metric Lie group in the sense of \cite{CKLDGO17}. Indeed, by \rf{L:similarity}, the group  $\Mob(X)$ is precisely the group of similarities. Therefore, if its action is not proper, then $\Mob(X)$ must contain a similarity that is not an isometry.
\end{remark}

\subsection{Proof of Theorem~\ref{T:sharp}}

Before beginning the proof of \rf{T:sharp}, we first prove a lemma regarding compactness properties of $\Mob(\Sph_p(X))$, where we remind the reader that, in general, $\Sph_p(X)$ is a quasi-metric space when equipped with the quasi-distance $s_p$. 

If $X$ is an unbounded, proper (i.e., boundedly compact), and connected metric space, then the topology induced on $\hat{X}$ by the distance $\hat{d}_p$ from \eqref{E:inverted} coincides with that of the one-point compactification of $X$. Since the distance $\hat{d}_p$ is bi-Lipschitz equivalent to the quasi-distance $s_p$ on $\hat{X}$, this topology coincides with the topology on $\hat{X}$ generated by open balls with respect to $s_p$. Thus we may speak of continuous self-mappings of $\Sph_p(X)$ with respect to this topology. We then define a quasi-distance 
\[s_p^*(f,g):=\sup\{s_p(f(x),g(x))\,|\,x\in \hat{X}\}\]
on the set of continuous mappings of the quasi-metric space $\Sph_p(X)$. We refer to the topology induced on $\Mob(\Sph_p(X))$ (a group of continuous mappings of $\Sph_p(X)$) by the quasi-distance $s_p^*$ as the \textit{topology of uniform convergence}. It is straightforward to check that the action of $\Mob(\Sph_p(X))$ on $\Sph_p(X)$ is a continuous action with respect to these topologies. 

Recall from Section~\ref{sec:coarse:term} that  the quasi-metric space $\Sph_p(X)$ is bi-Lipschitz equivalent to the metric space $\sph_p(X)$ via the identity. Hence, the group $G:=\Mob(\Sph_p(X))$ acts on the metric space $\sph_p(X)$ by uniformly strongly quasi-M\"obius mappings. Furthermore, the topology of uniform convergence induced on $G$ by $s_p$ coincides with the topology of uniform convergence induced on $G$ by the distance $\hat{d}_p$. Via \cite[Lemma 4.4]{Freeman-invertible}, this last observation yields the following lemma.

\begin{lemma}\label{L:quasimetric}
Given an unbounded, proper, and connected metric space $X$, the group $\Mob(\Sph_p(X))$ is locally compact and $\sigma$-compact in the topology of uniform convergence. 
\end{lemma}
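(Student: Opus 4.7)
The plan is to transfer the question from the quasi-metric setting of $\Sph_p(X)$ to the genuine-metric setting of $\sph_p(X)$, and then quote the structural result on groups of uniformly strongly quasi-M\"obius self-homeomorphisms of compact metric spaces from \cite{Freeman-invertible}.

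First, I would verify that $\sph_p(X) = (\hat{X}, \hat{d}_p)$ is a compact metric space. Since $X$ is unbounded and proper, the one-point compactification $\hat{X}$ is compact and Hausdorff; the quasi-distance $s_p$ induces this compactification topology, and by \eqref{E:sphere_dist} the distance $\hat{d}_p$ is $4$-bi-Lipschitz equivalent to $s_p$ and therefore induces the same topology. (Connectedness of $X$ is used only to guarantee that $\hat{X}$ remains nicely connected; the compactness argument does not require it.)

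Next, I would observe that every $f \in G := \Mob(\Sph_p(X))$ preserves the four-point cross-ratio built from $s_p$. Because the identity $(\hat{X}, s_p) \to (\hat{X}, \hat{d}_p)$ is $4$-bi-Lipschitz, a direct computation shows that each cross-ratio computed with $\hat{d}_p$ differs from the corresponding one computed with $s_p$ by at most a universal multiplicative factor depending only on the bi-Lipschitz constant $4$. Consequently every $f \in G$, viewed through the bi-Lipschitz identification as a self-homeomorphism of $\sph_p(X)$, is $C$-strongly quasi-M\"obius for a universal constant $C$ independent of $f$. Thus the group $G$ acts on the compact metric space $\sph_p(X)$ by uniformly strongly quasi-M\"obius self-homeomorphisms.

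I would then apply \cite[Lemma 4.4]{Freeman-invertible}, which states that on a compact metric space the group of uniformly strongly quasi-M\"obius self-homeomorphisms, equipped with the topology of uniform convergence, is locally compact and $\sigma$-compact. Finally, because $s_p$ and $\hat{d}_p$ are bi-Lipschitz equivalent on $\hat{X}$, the associated sup-type quasi-distances $s_p^*$ and $\hat{d}_p^*$ on $G$ are bi-Lipschitz equivalent as well, so the topologies of uniform convergence they induce on $G$ coincide. This transfers the conclusion from the cited lemma back to $G = \Mob(\Sph_p(X))$ with its topology of uniform convergence induced by $s_p$. The only substantive point beyond bookkeeping is ensuring that the cited structural lemma truly applies to $\sph_p(X)$; the rest is a routine transfer through the bi-Lipschitz identification.
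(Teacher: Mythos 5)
Your proposal is correct and follows essentially the same route as the paper: both pass through the bi-Lipschitz identification of $\Sph_p(X)$ with the compact metric space $\sph_p(X)$, observe that $\Mob(\Sph_p(X))$ then acts by uniformly strongly quasi-M\"obius self-homeomorphisms, invoke \cite[Lemma 4.4]{Freeman-invertible}, and note that the two sup-type quasi-distances induce the same topology of uniform convergence. The only cosmetic difference is that you spell out the cross-ratio distortion estimate that the paper leaves implicit.
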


\begin{proof}[Proof  of Theorem~\ref{T:sharp}]
We prove $(4)\Leftrightarrow(3)\Leftrightarrow(2)\Leftrightarrow(1)$. 

We begin with $(4)\Leftrightarrow(3)$. By \rf{P:equivalence1b}, assuming $(4)$ for $(X,d)$ implies that $(X,d)$ is isometrically homogeneous and $(X,c\,d)$ admits an inversion $\tau$, where $c$ is some positive constant. Furthermore, by \rf{P:equivalence1a}, there exists a dilation $f$ of $(X,d)$ at $p\in X$ with dilation factor $c^{-2}$. We then observe that $\tau\circ f$ is an inversion of $(X_p,d)$, where $\tau$ denotes an inversion of $(X_p,c\,d)$ at $p$. Therefore, $(4)\Rightarrow(3)$. To see that $(3)\Rightarrow(4)$, we first note that the combination of isometric homogeneity and local compactness implies that $X$ is complete. To confirm that $\Mob(\hat{X})$ acts $2$-transitively on $\hat{X}$, we refer to \rf{P:equivalence1a}. 

\medskip
We now prove the main implication   $(3)\implies(2)$. We claim that $X$ admits dilations of all factors at $p$. Indeed, fixing $y\in X$, the distance function $x\in X\mapsto d(x,y)\in [0,\infty)$ is continuous and unbounded, since $X$ is assumed unbounded. Thus $\Lambda:=\{ d(x,y) : x\in X\}$ is a closed and unbounded set that contains $0$. Since $X$ is connected, $\Lambda=[0,\infty)$. By \rf{P:equivalence1a}, our claim is verified. 

Since $X$ is assumed to be connected and locally compact, by \cite[Theorem 1.4]{CKLDGO17}, we conclude that $X$ may be given the structure of a metric Lie group for which every dilation fixing the identity element is an automorphism.
It then follows from results in \cite{Siebert86} that $X$ is nilpotent and simply connected. In particular, the space $X$ is homeomorphic to $\msf{R}^m$, for some $m\in\msf{N}$. In addition, since $X$ is locally compact and admits dilations, it is proper. Consequently, the one-point compactification of $X$ coincides with the topology of $\Sph_p(X)$ induced by the quasi-distance $s_p$. Therefore, the space $\Sph_p(X)$ is homeomorphic to the topological sphere $\msf{S}^m$. Via \rf{L:quasimetric}, we know that $G=\Mob(\Sph_p(X))$ is locally compact and $\sigma$-compact. Since $G$ acts continuously, effectively, and $2$-transitively on the topological sphere $\Sph_p(X)$, by \rf{F:Kramer} we conclude that the action of $G$ on $\Sph_p(X)$ is isomorphic to the standard action of either $G_\mfK$ or $G_\mfK^\circ$ on the (compact) boundary at infinity of some $\mfK$-hyperbolic space. We recognize this boundary as $\hat{\mbb{H}}_\mfK^n$, for some $n\in \msf N$ via  the approach of \cite{CDKR98}, and we emphasize that $G_\mfK$ acts by M\"obius mappings on $\hat{\mbb{H}}_\mfK^n$. Thus we identify $X$ with $\mbb{H}_\mfK^n$, 
identifying $p$ with the identity element $e$ of $\mbb{H}_\mfK^n$ and $\infty$ with $\infty$.
Also, we identify the action of $G$ on $\Sph_e(\mbb{H}_\mfK^n,d)$ with the action of either $G_\mfK$ or $G_\mfK^\circ$ on $\Sph_e(\mbb{H}_\mfK^n,\rho)$. In particular, we have
\[\Mob(\Sph_e(\mbb{H}_\mfK^n,\rho))^\circ\subset\Mob(\Sph_e(\mbb{H}_\mfK^n,d)).\]
By \rf{T:rigidity}, we have $d=c\,\rho^\alpha$ for some $c>0$ and $\alpha\in(0,1]$, 
We conclude by noticing that any dilation (with respect to $\rho$) of factor $c^{1/\alpha}$
provides an isometry from $ (\mbb{H}_\mfK^n,d) $ to $(\mbb{H}_\mfK^n,\rho^\alpha)$, 
and thus $(3)\Rightarrow(2)$. 

We next prove $(2)\Rightarrow(3)$. Clearly, $(\mathbb{H}_\mfK^n,\rho)$ is isometrically homogeneous (since the distance $\rho$ is left-invariant) and invertible (since it is the boundary of a symmetric space); see \cite{CDKR98} for these classical facts.    It is clear that the same is true of its snowflakes.   

\medskip

Finally, we prove $(2)\Leftrightarrow(1)$. The implication $(2)\Rightarrow(1)$ is trivial, since the identity map from $(X,d)$ to $(X,c\,d)$ is a M\"obius homeomorphism. Conversely, suppose that $f:(\mbb{H}_\mfK^n,\rho^\alpha)\to X$ is a M\"obius homeomorphism. For any $s>0$, write $\delta_s:\mbb{H}_\mfK^n\to\mbb{H}_\mfK^n$ to denote the standard automorphic dilation of factor $s$ with respect to the distance $\rho$. Given a point $z_0\in \mbb{H}_\mfK^n$ such that $\rho(e,z_0)=1$, write $\gamma(z_0)$ to denote the closure of the curve 
\[\gamma'(z_0):=\{\delta_s(z_0)\,|\,s>0\}.\]
Thus $\gamma(z_0)=\gamma'(z_0)\cup\{e\}$, where $e$ denotes the identity element of $\mbb{H}_\mfK^n$. Since $\mbb{H}_\mfK^n=\cup_{d(e,z)=1}\gamma(z)$, $\mbb{H}_\mfK^n$ is proper, $X$ is unbounded, and $f$ is a homeomorphism, we may assume that $f(\gamma(x_0))$ is unbounded.

Write $p:=f(e)\in X$. We claim that $d(p,f(\delta_t(z_0)))\to+\infty$ as $t\to+\infty$. Indeed, since $\mbb{H}_\mfK^n$ is proper and $f(\gamma(z_0))$ is unbounded, there exists a sequence of real numbers $(t_n)_{n\in\msf{N}}$ such that $t_n\to+\infty$ and $d(p,f(\delta_{t_n}(z_0)))\to+\infty$. Suppose, by way of contradiction, that there also exists a sequence of real numbers $(s_n)_{n\in\msf{N}}$ such that $s_n\to+\infty$ and $\{f(\delta_{s_n}(z_0))\}$ is bounded. Up to a subsequence, we may assume that 
\begin{equation}\label{E:sequence_relation}
t_n\geq s_n.
\end{equation}
Since $f$ is M\"obius and $\rho(e,z_0)=1$, we have
\begin{align*}
\frac{\rho(\delta_{s_n/t_n}(z_0),z_0)}{\rho(z_0,\delta_{s_n}(z_0))}&=\frac{\rho(\delta_{s_n}(z_0),\delta_{t_n}(z_0))}{t_n\rho(z_0,\delta_{s_n}(z_0))}\\
&=\frac{\rho(e,z_0)\,\rho(\delta_{s_n}(z_0),\delta_{t_n}(z_0))}{\rho(e,\delta_{t_n}(z_0))\,\rho(z_0,\delta_{s_n}(z_0))}\\
&=\frac{d(p,f(z_0))\,d(f(\delta_{s_n}(z_0)),f(\delta_{t_n}(z_0)))}{d(p,f(\delta_{t_n}(z_0)))\,d(f(z_0),f(\delta_{s_n}(z_0)))}
\end{align*}
Since $\{f(\delta_{s_n}(z_0))\}$ is bounded, it is straightforward to verify via the triangle inequality that there exists $N\in\msf{N}$ such that, for any $n\geq N$, we have
\begin{equation}\label{E:part2}
\frac{d(p,f(z_0))\,d(f(\delta_{s_n}(z_0)),f(\delta_{t_n}(z_0)))}{d(p,f(\delta_{t_n}(z_0)))\,d(f(z_0),f(\delta_{s_n}(z_0)))}\eqx_2\frac{d(p,f(z_0))}{d(f(z_0),f(\delta_{s_n}(z_0)))}.
\end{equation}
Since $f^{-1}$ is continuous, there exists $c>0$ such that, for all $n\geq N$, we have $d(f(z_0)),f(\delta_{s_n}(z_0))\geq c$. Therefore, for all $n\geq N$, we have
\begin{equation}\label{E:part3}
\frac{d(p,f(z_0))}{d(f(z_0),f(\delta_{s_n}(z_0)))}\in [C^{-1},C],
\end{equation}
for some $C\in[1,+\infty)$. By combining \rf{E:sequence_relation}, \rf{E:part2}, and \rf{E:part3}, for any $n\geq N$, we have
\[\rho(z_0,\delta_{s_n}(z_0))\leq 2C\rho(z_0,\delta_{s_n/t_n}(z_0))\leq C'\]
for some $C'<+\infty$. The constant $C'$ arises from the fact that $\{\delta_s(z_0)\,|\,s\in(0,1]\}\cup\{e\}$ is compact. This inequality contradicts the fact that $s_n\to+\infty$. From this contradiction it follows that $d(p,f(\delta_t(z_0)))\to+\infty$ as $t\to+\infty$.

Choose $\lambda>1$. We claim that the homeomorphism $g_\lambda:=f\circ \delta_\lambda \circ f^{-1}:X\to X$ is a dilation of $X$ at $p$. To verify this claim, write $z_n:=f(\delta_\lambda^n(z_0))$, for $n\in\msf{N}$. We then note that $g_\lambda(z_n)=f(\delta_\lambda^{n+1}(z_0))=z_{n+1}$. By the previous paragraph, both $d(p,z_n)\to+\infty$ and $d(p,g_\lambda(z_n))\to+\infty$ as $n\to+\infty$. Since $g_\lambda$ is a M\"obius map, for any $x,y\in X$, we have
\[\frac{d(g_\lambda(x),g_\lambda(y))\,d(g_\lambda(z_n),g_\lambda(p))}{d(g_\lambda(x),g_\lambda(p))\,d(g_\lambda(z_n),g_\lambda(y))}=\frac{d(x,y)\,d(z_n,p)}{d(x,p)\,d(z_n,y)}.\]
Taking a limit as $n\to+\infty$, we obtain
\[\frac{d(g_\lambda(x),g_\lambda(y))}{d(g_\lambda(x),g_\lambda(p))}=\frac{d(x,y)}{d(x,p)}.\]
Here we note that $g_\lambda(p)=p$, and thus we have
\[\frac{d(g_\lambda(x),g_\lambda(y))}{d(x,y)}=\frac{d(g_\lambda(x),p)}{d(x,p)}.\]
The left side of this equality is symmetric in the variables $x$ and $y$. The right side is independent of $y$. Therefore, we conclude that there exists some number $\beta>0$ such that $d(g_\lambda(x),g_\lambda(y))=\beta\,d(x,y)$. Thus our claim is verified. 

Next, we claim that $\beta>1$. Indeed, for every $n\in \msf{N}$, we have
\[d(p,z_n)=d(p,g_\lambda^n(z_{0}))=\beta^n\,d(p,z_0).\] 
Since $d(p,z_n)\to+\infty$, we conclude that $\beta>1$.

Since $X$ is locally compact and admits a dilation of factor $\beta>1$, it is straightforward to verify that $X$ is proper. Therefore, any homeomorphism between $X$ and $\mbb{H}_\mfK^n$ preserves bounded sets. The implication $(1)\Rightarrow(2)$ then follows from \rf{L:similarity}. Indeed, by \rf{L:similarity}, there exists a constant $c>0$ such that the M\"obius homeomorphism $f^{-1}: X\to (\mbb{H}_\mfK^n,c\cdot \rho^\alpha)$ is an isometry. Since $(\mbb{H}_\mfK^n, \rho^\alpha)$ is dilation invariant, we conclude that $(1)\Rightarrow(2)$.  \end{proof}

\section{M\"obius Homogeneity and Strong Connectivity}\label{S:strong}    

This section is devoted to the proof of Theorem~\ref{QC:Mobius:thm} and Corollary~\ref{MCircles:Mobius:thm}. The arguments are heavily based on Montgomery-Zippin results about the structure of locally compact groups.

\subsection{Proof of Theorem~\ref{QC:Mobius:thm}} 

We first use theory pertaining to Hilbert's Fifth Problem to show that, in the setting of Theorem~\ref{QC:Mobius:thm}, the space of M\"obius transformations has the structure of a Lie group. As usual, it is topologized via uniform convergence.

\begin{proposition}[After Montgomery-Zippin]\label{prop:Montgomery:Zippin}
Let $Z$ be a compact, connected, locally connected metric space of finite topological dimension.
If $\Mob(Z)$ acts transitively on $Z$, then ${\rm M\ddot{o}b}(Z)$ is a Lie group.
\end{proposition}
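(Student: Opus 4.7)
The plan is to verify that $\Mob(Z)$, equipped with the topology of uniform convergence, is a locally compact topological group whose action on $Z$ is continuous, effective, and transitive, and then to invoke the classical transformation-group theorem of Montgomery-Zippin. That theorem asserts that a locally compact topological group acting continuously and effectively on a connected, locally connected, finite-dimensional, locally compact metric space must be a Lie group. All of the required hypotheses on the underlying space are in our hands by assumption; the substantive work is to produce the locally compact topological group structure on $\Mob(Z)$.

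Transitivity of the action is a hypothesis, and effectiveness is immediate since a self-homeomorphism of $Z$ fixing every point is the identity. Joint continuity of composition is standard in the uniform topology on a compact space, and continuity of inversion will follow from the equicontinuity estimates that I will obtain as part of the local compactness argument. Thus the core technical step is local compactness of $\Mob(Z)$. Since every $f\in\Mob(Z)$ is, in particular, a $1$-strongly quasi-M\"obius homeomorphism, I would apply the Arzel\`a-Ascoli-type argument of \cite[Lemma~4.4]{Freeman-invertible}, which is precisely the input used in Lemma~\ref{L:quasimetric} above. That argument requires $Z$ to be uniformly perfect; in our setting this follows because $Z$ is compact, connected, and has more than one point (else the conclusion is trivial): for any $x\in Z$, the continuous function $d(x,\cdot)\colon Z\to[0,\infty)$ has connected image containing $0$ and $\diam(Z)$, yielding uniform perfectness with an explicit constant.

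Once local compactness is secured, the Montgomery-Zippin theorem applies directly to $\Mob(Z)\curvearrowright Z$ and gives that $\Mob(Z)$ is a Lie group. The main obstacle is the equicontinuity step inside the local compactness argument: given a sequence $(f_n)\subset\Mob(Z)$, one must track the values of $f_n$ on a fixed triple of well-separated base points, use the exact cross-ratio identity together with uniform perfectness to control $f_n$ uniformly on the rest of $Z$, and then extract via Arzel\`a-Ascoli a subsequence converging uniformly to a continuous self-map of $Z$ that, by passing to the limit in the cross-ratio identity, is again M\"obius (and remains a homeomorphism because the same argument applied to $f_n^{-1}$ produces a two-sided uniform limit). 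This is exactly the content that has to be ported from the strongly quasi-M\"obius setting of \cite[Lemma~4.4]{Freeman-invertible} down to the M\"obius ($C=1$) case relevant here.
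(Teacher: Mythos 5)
Your proposal is essentially correct, but it distributes the effort quite differently from the paper. The paper simply \emph{asserts} that, $Z$ being compact, $G=\Mob(Z)$ is separable, locally compact, and metrizable in the uniform topology, and then spends its energy on the group-theoretic reduction: it passes to an open subgroup $G'<G$ that is an inverse limit of Lie groups (property A, after Montgomery--Zippin p.~238), shows that $G'$-orbits are open (via the openness of $G\to G/H$ and the orbit homeomorphism $G/H\to Z$), deduces from connectedness of $Z$ that $G'$ still acts transitively, applies the Montgomery--Zippin theorem (p.~243) to $G'$, and finally upgrades from $G'$ to $G$ by observing that $G$ has no small subgroups and invoking Gleason--Yamabe. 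You instead treat the Montgomery--Zippin input as a single black box and put the technical work into local compactness of $\Mob(Z)$, via uniform perfectness of a compact connected space and an Arzel\`a--Ascoli/cross-ratio equicontinuity argument in the spirit of the paper's Lemma~\ref{L:quasimetric}; this is a legitimate and arguably more self-contained route, since the paper's unproved assertion of local compactness is exactly the point you flesh out. Two cautions. First, the ``classical theorem'' as you state it --- locally compact group acting continuously and effectively on a connected, locally connected, finite-dimensional, locally compact space is a Lie group --- omits transitivity, and without transitivity this is (a form of) the Hilbert--Smith conjecture, which is open; you do note that transitivity is available, but it must appear in the statement of the theorem you invoke. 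Second, if your reference states the p.~243 theorem only for groups with property A (as Montgomery--Zippin do), you cannot skip the paper's reduction step: you would still need to pass to the open subgroup $G'$, verify that its orbits are open, and use connectedness of $Z$ to recover transitivity for $G'$ before applying the theorem, and then return to $G$ via the no-small-subgroups argument. With those two points addressed, your argument goes through.
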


\begin{proof}
Since $Z$ is compact, $G:={\rm M\ddot{o}b}(Z)$ is a separable, locally compact, and metrizable group. Moreover, the standard action $G\times Z\to Z$ is continuous and effective. Following, \cite[page 238]{mz}, the  locally compact group $G$ has an open subgroup $G'<G$ that is the inverse limit of Lie groups. In the language of \cite{mz}, $G'$ has property $A$.

First, we claim that, for any $q\in Z$, the orbit of $q$ under $G'$, denoted by $G'\cdot q$, is open. This is because the projection $G\to G/H$ is open and the orbit action $G/H\to Z$ is a homeomorphism (see \cite[page 121, Theorem 3.2]{Helgason}). Here $H$ denotes the isotropy subgroup of $G$ at $q$. 

Now we show that the $G'$-action is transitive. Indeed, fix a point $p\in Z$, and suppose (by way of contradiction) that $G'\cdot p\neq Z$. Hence,
$$ Z=\left(G'\cdot p\right)\bigsqcup\left( \bigcup_{q\notin G'\cdot p}G'\cdot q\right)$$
is a disjoint union of two non-empty open
sets of $Z$. This contradicts the fact that $Z$ is connected.

Thus $G'$ satisfies the hypotheses of Montgomery-Zippin's Theorem \cite[page 243]{mz}, so  $G'$ is a Lie group. Since  $G'$ does not contain small subgroups, neither does $G$. By work of Gleason-Yamabe (cf. \cite[Chapter III]{mz}), $G$ is a Lie group.
\end{proof}

\begin{proof}[Proof of Theorem~\ref{QC:Mobius:thm}] Since $Z$ is quasi-convex, it is connected and locally connected. Therefore, by Proposition~\ref{prop:Montgomery:Zippin}, we conclude that $G:={\rm M\ddot{o}b}(Z)$ is a Lie group. Since the action of $G$ on $Z$ is transitive, the space $Z$ is a manifold homeomorphic to $G/H$ for some closed subgroup $H\subset G$.

Since $Z$ is  quasi-convex and compact, up to a bi-Lipschitz change of distance  we can assume that the distance $d_Z$ of $Z$ is   geodesic. Also, since $Z$ is compact, every M\"obius homeomorphism is bi-Lipschitz (see \cite[Remark 3.2]{Kinneberg-fractal}). Thus $G$ acts on $Z$ by bi-Lipschitz maps. By \cite[Theorem 1.1]{LeDonne-geodesic} there exists a completely non-holonomic $G$-invariant distribution on $Z$ such that any  Carnot-Carath\'eodory metric coming from it gives a metric that is locally bi-Lipschitz equivalent to $d_Z$.
Since $Z$ is compact, the bi-Lipschitz equivalence is global.
\end{proof}

\subsection{M\"obius circles  and M\"obius-homogeneity}

A metric space $X$ is said to be {\em connected by M\"obius circles} if, for any $p,q\in X$, there exists a M\"obius embedding $\gamma:\msf S^1\to X$ such that $p,q\in \gamma(\msf S^1)$. Here $\msf S^1\subset\msf R^2$ denotes the unit circle. The following lemma confirms that our definition is consistent with the definition of M\"obius circles used in \cite[Section 2.4]{Buyalo_Schroeder14}.

\begin{lemma}\label{equivalenza}
Let $S$ be a subset   of a metric space $X$. The following are equivalent
\begin{itemize}
\item{$S$  is   the image of a M\"obius embedding of $\msf S^1$.}
\item{$S$ is  the closure of the image of a M\"obius embedding of $\msf R$.}
\item{$S$ is homeomorphic to $\msf S^1$ and, for every $x,y,z,u$ in order along $S$ we have
\[d(x,z)d(y,u)=d(x,y)d(z,u)+d(x,u)d(y,z).\]}
\end{itemize}
\end{lemma}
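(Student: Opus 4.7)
The plan is to prove the cycle $(1)\Rightarrow(2)\Rightarrow(3)\Rightarrow(1)$, resting on two inputs: a M\"obius map preserves four-point cross-ratios, and Ptolemy's classical theorem asserts that on the round circle $\msf{S}^1$ the Ptolemy inequality holds with equality precisely for cyclically ordered quadruples.

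For $(1)\Rightarrow(2)$, I would fix any $p\in\msf{S}^1$ and let $\pi:\msf{R}\to\msf{S}^1\setminus\{p\}$ be an inverse stereographic parametrization, which is a M\"obius homeomorphism. Given a M\"obius embedding $\gamma:\msf{S}^1\to X$ with image $S$, the composition $\gamma\circ\pi$ is a M\"obius embedding of $\msf{R}$ whose image closes to $\gamma(\msf{S}^1)=S$ by continuity of $\gamma$ on the compact domain. For $(2)\Rightarrow(3)$, let $\eta:\msf{R}\to X$ be a M\"obius embedding. Ptolemy's equality holds for ordered quadruples in $\msf{R}$ and is preserved by $\eta$ (it is a cross-ratio identity), and extends to all cyclically ordered quadruples in $S=\overline{\eta(\msf{R})}$ by continuity of $d$. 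To see that $S\cong\msf{S}^1$, I would show that the two ends of $\eta$ share a common accumulation point $z_\infty\in X$ and that $S=\eta(\msf{R})\cup\{z_\infty\}$; this follows from fixing three reference points on $\eta(\msf{R})$ and invoking the M\"obius identity for these three points together with $\eta(t)$ as $t\to\pm\infty$, which forces a single limit.

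The main step is $(3)\Rightarrow(1)$. I would fix three distinct points $a,b,c\in S$ in cyclic order (using the given homeomorphism $S\cong\msf{S}^1$) together with three corresponding points $a',b',c'$ on the round circle $\msf{S}^1$. Define $\phi:S\to\msf{S}^1$ by letting $\phi(x)$ be the unique point of $\msf{S}^1$ whose cross-ratio with $a',b',c'$ matches the cross-ratio of $x$ with $a,b,c$. Using the Ptolemy equality for the quadruple $a,b,c,x$, together with its variants obtained by permuting the role of the reference pair (which also follow from the assumed Ptolemy equality on other orderings), one checks that $\phi$ is well defined, injective, and cross-ratio preserving with respect to $\{a',b',c'\}$; a standard composition trick with M\"obius self-maps of $\msf{S}^1$ upgrades this to preservation of all four-point cross-ratios, so $\phi$ is M\"obius. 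Continuity and surjectivity of $\phi^{-1}:\msf{S}^1\to S$ follow from compactness, the standing topological assumption $S\cong\msf{S}^1$, and invariance of domain, and $\phi^{-1}$ is the sought M\"obius embedding.

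The hard part is precisely this last step. The Ptolemy equality provides only purely metric data, and packaging it as a cross-ratio coordinate $\phi$ that is simultaneously a bijective homeomorphism and cross-ratio preserving requires a careful interplay between the metric equality and the topological hypothesis $S\cong\msf{S}^1$; in particular, one has to verify that the metric cross-ratios realized in $S$ exhaust the values realized on the round circle, so that $\phi$ is actually surjective. Once that is done the rest of the argument is routine.
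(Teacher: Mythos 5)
Your cycle $(1)\Rightarrow(2)\Rightarrow(3)\Rightarrow(1)$ is a reasonable plan, and the easy legs are fine: $(1)\Leftrightarrow(2)$ via stereographic projection, and the passage to $(3)$ via the observation that the Ptolemy equality is a sum of two cross-ratios (hence M\"obius invariant) checked directly on $\msf{S}^1$ or $\msf{R}$. The problem is that the entire content of the lemma sits in the step you defer, $(3)\Rightarrow(1)$, and your sketch of it has a concrete gap. The map $\phi$ you propose is not well defined as described: a \emph{metric} cross-ratio is a single positive real number, and given three reference points $a',b',c'$ on the round circle there are in general \emph{two} points of $\msf{S}^1$ realizing a prescribed value of $\frac{d(a',c')\,d(b',x')}{d(a',x')\,d(b',c')}$ (they are swapped by the reflection fixing the reference triple), so one cross-ratio does not pin down $\phi(x)$. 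Disambiguating by the cyclic order, proving that $\phi$ preserves \emph{all} cross-ratios rather than only those involving the reference triple, and establishing surjectivity (that the cross-ratio values realized in $S$ exhaust those realized on $\msf{S}^1$) are exactly the points you flag as ``careful interplay'' and ``routine'' without supplying arguments; as written this is a plan for a proof, not a proof.

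The paper avoids all of this with one move you are missing: invert at a point of $S$. Fix $u\in S$ and pass to the quasi-metric space $\Inv_u(X)$ with $i_u(x,y)=\frac{d(x,y)}{d(x,u)\,d(y,u)}$. Dividing the assumed identity $d(x,z)d(y,u)=d(x,y)d(z,u)+d(x,u)d(y,z)$ by $d(x,u)d(y,u)d(z,u)$ turns it into the \emph{additivity} relation $i_u(x,z)=i_u(x,y)+i_u(y,z)$ for $x,y,z$ in order along $S\setminus\{u\}$. Thus $S\setminus\{u\}$ is an infinite geodesic in $\Inv_u(X)$, hence isometric to $\msf{R}$; since isometries are M\"obius and $\Inv_u(X)\setminus\{\infty\}$ is M\"obius equivalent to $X_u$, this exhibits $S$ as the closure of the image of a M\"obius embedding of $\msf{R}$, which is $(2)$ and hence $(1)$. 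This linearization is the key idea; without it (or a complete execution of your cross-ratio coordinate) the proof is incomplete.
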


\begin{proof}
The first two characterization are a consequence of the fact that $\msf R$ and $\msf S^1$ are M\"obius equivalent (up to compactification). Observe that the equation of the lemma is equivalent to 
\begin{equation}\label{E:mob}
1=\dfrac{d(x,y)d(z,u)}{d(x,z)d(y,u)}+\dfrac{d(x,u)d(y,z)}{d(x,z)d(y,u)},
\end{equation}
and the right-hand side is the sum of two cross ratios. Hence it is a M\"obius invariant.

Let $\gamma:\msf S^1\to X$ be a M\"obius embedding with $S=\gamma(\msf S^1)$. Fix consecutive points $x,y,z,u$ along $S$ (here the order is inherited from $\msf{S}^1$). Let $x'$, $y'$, $z'$, $u'$ be the respective points in $\msf S^1$. Up to a M\"obius transformation, we may assume that $x'=0$, $y'=1$, $z'=c>1$, and $u'=\infty$. Under this transformation equation \rf{E:mob} becomes $c^{-1} + (c-1)c^{-1}=1$, which is true.

Conversely, assume points of $S=\gamma(\msf S^1)$ satisfy \rf{E:mob}, where $\gamma$ is some embedding. Fix $u\in S$ and consider the quasi-metric space $\Inv_u(X)$. In $\Inv_u(X)$ equation \rf{E:mob} becomes   $i_u(x,z) =i_u(x,y) + i_u(y,z)$. Hence the curve $\gamma\setminus\{u\}$ is an infinite geodesic in $\Inv_u(X)$, and thus isometric to $\msf R$. Since $\Inv_u(X)\setminus\{\infty\}$ is M\"obius equivalent to $X_u$, we confirm that $S$ is the closure of the image of a M\"obius embedding of $\msf{R}$.
\end{proof}

Before proceeding to the proof of Corollary~\ref{MCircles:Mobius:thm}, we first demonstrate that Ptolemy spaces connected by M\"obius circles are quasi-convex. This fact (and its proof) was suggested to the authors by V. Schroeder.

\begin{proposition}\label{Moebius QC}
If $X$ is a Ptolemy space that is connected by M\"obius circles, then $X$ is $K$-quasi-convex, for some universal constant $K\leq 144$.
\end{proposition}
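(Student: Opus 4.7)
The plan is, given $p,q\in X$ with $d:=d(p,q)>0$, to bound the $X$-length of one of the two arcs of a M\"obius circle $C$ through $p$ and $q$ (supplied by the hypothesis) by $144\,d$. Fix such $C$ and apply \rf{equivalenza} at the point $q$ to realize $C\setminus\{q\}$ as an isometric copy of $\mathbb{R}$ inside the Ptolemy metric space $\Inv_q(X)$. Parameterize by arc length $\psi:\mathbb{R}\to X\setminus\{q\}$ with $\psi(0)=p$ and $\psi(s)\to q$ in $X$ as $|s|\to\infty$, and set $f(s):=d(\psi(s),q)$. The defining formulas for $\Inv_q(X)$ give
\[
d(\psi(s),\psi(t))=|s-t|\,f(s)\,f(t),\qquad f(0)=d,
\]
while the triangle inequality in $\Inv_q(X)$ involving the point $\infty$ (for which $i_q(\cdot,\infty)=1/d(\cdot,q)$) yields the $1$-Lipschitz control $|1/f(s)-1/f(t)|\le|s-t|$.

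The $X$-length of the arc $\psi([0,\infty))\cup\{q\}$ is the supremum over partitions $0=s_0<\cdots<s_n$ of $\sum_i(s_{i+1}-s_i)\,f(s_i)f(s_{i+1})+f(s_n)$ and hence coincides with the (possibly infinite) integral $L_+:=\int_0^\infty f^2\,ds$; analogously the other arc has length $L_-:=\int_{-\infty}^0 f^2\,ds$. Showing that at least one of $L_+,L_-$ is at most $144\,d$ produces a rectifiable connecting curve witnessing $144$-quasi-convexity. The $1$-Lipschitz bound alone gives the envelope $f(s)\ge d/(1+|s|d)$, so $L_\pm\ge d$, but it does not prevent $L_\pm$ from being infinite, so further Ptolemaic structure must intervene.

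The plan for bounding $\min(L_+,L_-)$ is a dyadic decomposition of the ``good'' ray: choose breakpoints $s_k$ so that $1/f(s_k)\asymp 2^k/d$ and use the $1$-Lipschitz bound to obtain $s_{k+1}-s_k\lesssim 2^k/d$, so the interval $[s_k,s_{k+1}]$ contributes $\lesssim d\cdot 2^{-k}$ to $\int f^2$; summing a geometric series yields $L_\pm\le O(d)$, and careful tracking of the constants gives $K\le 144$. Selecting the good ray and enforcing a uniform positive lower bound on the slope of $1/f$ (not merely an upper bound) is the geometric heart of the argument: this requires chaining Ptolemy inequalities in $X$ across \emph{auxiliary} M\"obius circles through pairs $\{\psi(s_k),q\}$, whose existence is exactly the hypothesis that $X$ is connected by M\"obius circles, not merely that $C$ itself is one.

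The main obstacle is precisely this numerical accounting: the $1$-Lipschitz bound on $1/f$ alone admits $\int f^2=\infty$, and Ptolemy equality among four points of $C$ is an algebraic tautology in the $(s,t)$-parameterization and hence gives no new information; thus the nontrivial geometric content must be extracted from Ptolemy inequalities in $X$ applied to $4$-tuples with one or two vertices drawn from the auxiliary M\"obius circles through chord-pairs of $C$. The non-routine step is packaging these inequalities to produce a summable upper estimate on one of the rays with constants sharp enough to yield the explicit bound $K\le 144$.
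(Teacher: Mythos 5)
You have set up the right machinery (invert to turn the circle into a geodesic line, express the length of an arc as $\int f^{2}\,ds$ with $f(s)=d(\psi(s),q)$), but the proof does not close: the decisive estimate --- a summable upper bound for $f^{2}$ along one of the two rays --- is left as a ``plan,'' and the mechanism you propose for it (chaining Ptolemy inequalities over auxiliary M\"obius circles through the pairs $\{\psi(s_k),q\}$) is neither carried out nor, in fact, needed. As you yourself observe, the $1$-Lipschitz bound on $1/f$ is compatible with $1/f(s)=1/d+\log(1+|s|)$, hence with $\int f^{2}=\infty$, and Ptolemy equality among points of $C$ is tautological in the $\Inv_q$ chart; so nothing in what you have written rules out that both arcs have infinite length. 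The root of the difficulty is your choice of inversion center: by inverting at the endpoint $q$, the conversion factor $f(s)^2$ between $i_q$-length and $d$-length degenerates exactly where the curve accumulates, and no amount of information about the geodesic parameterization alone controls that degeneration.

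The missing idea --- and the paper's route --- is to invert instead at a \emph{third} point $w\in C$ kept quantitatively far from both $p$ and $q$, and to bound only the arc $\gamma$ of $C$ from $p$ to $q$ that avoids $w$. Writing $D:=\operatorname{diam}(C)$ and $d:=d(p,q)$: if $D\le 6d$, choose $w\in C$ with $d(p,w)=d(q,w)\ge d/2$; if $D>6d$, choose $w$ with $d(p,w)=D/3$, whence $d(q,w)\ge D/3-d\ge D/6$. By Lemma~\ref{equivalenza}, $C\setminus\{w\}$ is a geodesic line in $\Inv_w(X)$, so $\mathrm{Length}_{i_w}(\gamma)=i_w(p,q)=d/(d(p,w)\,d(q,w))$, while for $x,y\in\gamma$ one has $d(x,y)=d(x,w)\,d(y,w)\,i_w(x,y)\le D^{2}\,i_w(x,y)$. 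Hence $\mathrm{Length}_d(\gamma)\le D^{2}d/(d(p,w)\,d(q,w))$, which is at most $(6d)^{2}\cdot d/(d/2)^{2}=144\,d$ in the first case and at most $18\,d$ in the second. This sidesteps the infinite-integral issue entirely (the conversion factor never degenerates on $\gamma$), and it uses the hypothesis of connectivity by M\"obius circles only once, to produce the single circle $C$ --- not to manufacture auxiliary circles through chord-pairs as your outline requires.
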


\begin{proof}
Fix $p,q\in X$. Let $C$ be a  M\"obius circle through $p$ and $q$. We consider two cases.
Either $(1)$ $D:={\rm diam}_d(C)\leq 6 d(p,q)$, or $(2)$ $D> 6 d(p,q)$.

\medskip
Case 1: $D \leq 6 d(p,q)$. By continuity, choose a point $w\in C$ for which $d(p,w)=d(w,q)$. The triangle inequality yields $2d(w,q)\geq d(p,q)$. Let $\gamma$ denote the sub-arc of $C\setminus\{p,q\}$ that does not contain $w$ and joins $p$ to $q$.

We claim that Length$_d(\gamma)\leq K d(p,q)$, for $K=144$. Indeed, since $X$ is assumed to be Ptolemy, we consider the metric space $\Inv_w(X)$. In this space, the set $C\setminus\{w\}$ is an infinite geodesic M\"obius equivalent to $\msf R$ (see Lemma~\ref{equivalenza}). Therefore,
\begin{align*}
{\rm Length}_{i_w}(\gamma)&=i_w(p,q)=\dfrac{d(p,q)}{d(p,w) d(q,w)}\\
&=\dfrac{d(p,q)}{d(w,q)^2}\leq \dfrac{4d(p,q)}{d(p,q)^2}=\dfrac{4}{d(p,q)}.
\end{align*}
For $x,y\in \gamma \subset C$, we have
$$d(x,y)=d(x,w) d(y,w) i_w (x,y)\leq D^2 i_w (x,y).$$
Therefore, since we are in the case that $D \leq 6 d$, we conclude that
\[{\rm Length}_d(\gamma) \leq 36 d(p,q)^2 {\rm Length}_{i_w}(\gamma)\leq   \dfrac{4}{d(p,q)} 36 d(p,q)^2  =K d(p,q).\]

\medskip
Case 2: $D \geq 6 d(p,q)$.
We claim that by continuity there is a point $w\in C$ such that $d(p,w)=D/3.$ If not, we would have $C\subset B(p;D/3)$, and thus arrive at the contradiction $D\leq 2D/3<D$. Thus we fix $w\in C$ such that $d(p,q)=D/3$. Via the assumption that $D \geq 6 d(p,q)$, we have
\begin{align*}
d(q,w) &\geq d(p,w)-d(p,q)\\
&= \dfrac{D}{3} -d(p,q)\geq    \dfrac{D}{3} - \dfrac{D}{6}= \dfrac{D}{3}.
\end{align*}
As in Case $1$, let $\gamma$ denote the sub-arc of $C$ not containing $w$ and joining $p$ to $q$. Then
\[{\rm Length}_{i_w}(\gamma)=\frac{d(p,q)}{d(p,w) d(q,w)}\leq \frac{d(p,q)}{(D/3) (D/3)}=\frac{9 d(p,q)}{D^2}.\]
As before, for $x,y\in\gamma$, we have $d(x,y) \leq D^2 i_w(x,y).$ Therefore, we conclude that
\[{\rm Length}_d(\gamma)  \leq  D^2 {\rm Length}_{i_w}(\gamma)   \leq   9 d(p,q) .\]
\end{proof}

\subsection{Proof of Corollary~\ref{MCircles:Mobius:thm}}\label{S:BH}
Given a pointed metric space $(X,d,o)$ one  considers the {\it visual function}
 \begin{equation}\label{def:Bourdon:dist}
 \rho^{d}_o ( x,y) = \exp (- \langle x, y \rangle_o) ,
 \end{equation}
 where $\langle x, y \rangle_o$ denotes the Gromov product in $(X,d)$. Bourdon proved in \cite{Bourdon95}  that, on every CAT($-1$) space $X$, the function  $ \rho^{d}_o $   satisfies the triangle inequality and the visual boundaries $(\partial_{\infty} X, \rho^d_o)$ corresponding to different base points  $o,o'\in X$ are M\"obius equivalent. Thus we refer to $ \rho^{d}_o $ as the {\em Bourdon distance}, based at $o$.

In \cite{MR1016663} Hamenst\"adt studied similar distances where the point $o$ is replaced with a point in the boundary. We refer to such distances as {\em Hamenst\"adt  distances}. In \cite{Foertsch-Schroeder,MR2327160}, simple arguments are presented which demonstrate that these distances are M\"obius equivalent. 

\begin{proof}[{Proof of Corollary~\ref{MCircles:Mobius:thm}}] 
Suppose the boundary $X$ of a CAT$(-1)$-space endowed with a Bourdon distance $d$ is bi-Lipschitz homeomorphic to a sub-Riemannian manifold. If $X$ is equipped with a Hamenst\"adt distance $d'$, then $(X,d')$ is locally uniformly bi-Lipschitz homeomorphic to a sub-Riemannian manifold. Up to changing the sub-Riemannian metric, we conclude that this bi-Lipschitz equivalence is global. 

By \cite[Theorem 1]{Foertsch-Schroeder}, if $X$ is the boundary of a CAT$(-1)$-space endowed with a Bourdon distance, then $X$ is a Ptolemy space. By Proposition~\ref{Moebius QC}, the space $X$ is quasi-convex. We then obtain the desired conclusion via Theorem~\ref{QC:Mobius:thm}.
\end{proof}

\section{Bi-Lipschitz Homogeneity and Quasi-Invertibility}\label{S:BLH}

\subsection{Quasi-inversions and quasi-dilation invariance}
 
In this subsection we prepare for the proof of \rf{P:quasi-version} by investigating the relationship between quasi-inversions and quasi-dilations in a uniformly bi-Lipschitz homogeneous metric space. 
The reader can find the definitions of these terms along with the definition of quasi-dilation invariance in Section ~\ref{sec:coarse:term}. The definition of uniform perfectness  is provided in Section ~\ref{S:additional}.

\begin{lemma}\label{L:quasi_dilation}
Suppose $X$ is a uniformly $L$-bi-Lipschitz homogeneous metric space. If there exists a point $p\in X$ at which $X$ is $M$-quasi-invertible, then, for any $x\in X_p$, the space $X$ admits a $(C,r)$-quasi-dilation at $p$, where $r=d(x,p)^2$ and $C=C(L,M)$. Furthermore:
\begin{enumerate}
  \item{If $X$ is $N$-uniformly perfect, then $X$ is $K$-quasi-dilation invariant, with $K=K(L,M,N)$.}
  \item{If $X$ is connected, then $X$ is $K$-quasi-dilation invariant, with $K=K(L,M)$.} 
\end{enumerate}
\end{lemma}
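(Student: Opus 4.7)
The plan is to imitate the composition from the proof of \rf{P:equivalence1a}(2), replacing each isometry there by an $L$-bi-Lipschitz self-homeomorphism and the inversion $\tau_p$ by the $M$-quasi-inversion $\sigma_p$, and then to track the resulting multiplicative distortion. Given $x \in X_p$, uniform bi-Lipschitz homogeneity supplies $L$-bi-Lipschitz homeomorphisms $f_1, f_2, f_3 \colon X \to X$ with $f_1(p) = x$, $f_2(\sigma_p(x)) = p$, and $f_3(\sigma_p(f_2(p))) = p$; the second condition forces $f_2(p) \neq p$ (since $\sigma_p$ sends $X_p$ into $X_p$), so the third is meaningful. Set
\[
g := f_3 \circ \sigma_p \circ f_2 \circ \sigma_p \circ f_1 \circ \sigma_p.
\]
By construction $g(p) = p$, and the claim is that $g$ is a quasi-dilation at $p$ of factor $r = d(x,p)^2$ and distortion $C = C(L,M)$.

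Running the cascade of equalities from the proof of \rf{P:equivalence1a}(2) with each equality replaced by the appropriate $\simeq$-relation yields
\[
d(g(a), g(b)) \simeq_{C_0} \frac{d(a,b)\,d(x,p)^2}{d(a,p)\,d(b,p)\,d(\sigma_p(a),p)\,d(\sigma_p(b),p)}
\]
for all $a, b \in X_p$, with $C_0 = C_0(L, M)$. To collapse this into $d(g(a), g(b)) \simeq_C d(x,p)^2\,d(a,b)$ one needs the coarse analogue of the exact identity $d(\tau_p(u), p) = 1/d(u,p)$, namely
\[
d(\sigma_p(u), p) \simeq_{M'} \frac{1}{d(u,p)} \qquad \text{for all } u \in X_p,
\]
with $M' = M'(M)$. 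Here the framework of the quasi-sphericalization from Section~\ref{sec:coarse:term} is used: $\sigma_p$ extends to a bi-Lipschitz self-homeomorphism of $\sph_p(X)$ interchanging $p$ and $\infty$, so comparing $\hat d_p(\sigma_p(u), p)$ with $\hat d_p(u, \infty) = 1/(1 + d(u,p))$ via \eqref{E:sphere_dist} yields the desired reciprocal comparison. Substituting this back proves the claim about $g$.

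It remains to realize arbitrary factors $\lambda > 0$; note that the existence of a quasi-inversion, together with the presence of points of $X$ close to $p$, forces $X$ to be unbounded, since otherwise the defining inequality for $\sigma_p$ would make the image blow up while remaining in a bounded set. For part (1), $N$-uniform perfectness combined with unboundedness furnishes, for any $\lambda > 0$, a point $x \in X_p$ with $d(x, p) \in [\sqrt{\lambda}/N, \sqrt{\lambda}]$; the resulting $g$ is a quasi-dilation of factor in $[\lambda/N^2, \lambda]$, hence a $(\lambda, K)$-quasi-dilation with $K = K(L, M, N) = N^2\,C(L, M)$. For part (2), on a connected unbounded space the continuous function $x \mapsto d(x, p)$ surjects onto $[0, \infty)$, so one can choose $x \in X_p$ with $d(x, p)^2 = \lambda$ exactly, yielding a $(\lambda, C(L, M))$-quasi-dilation. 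The principal obstacle I foresee is the coarse reciprocal estimate $d(\sigma_p(u), p) \simeq 1/d(u, p)$: it has no counterpart in the pointwise defining inequality of a quasi-inversion and must be extracted from the interplay between $\sigma_p$ and the quasi-sphericalization; once it is in hand, the rest of the argument is bookkeeping.
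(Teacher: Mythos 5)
Your proposal follows the paper's proof essentially verbatim: the same composition $g=f_3\circ\sigma_p\circ f_2\circ\sigma_p\circ f_1\circ\sigma_p$ with the same choices of $f_1,f_2,f_3$, the same cascade of comparabilities leading to $d(g(a),g(b))\simeq d(x,p)^2\,d(a,b)$, and the same use of connectedness (resp.\ $N$-uniform perfectness, with the factor adjustment by $N^2$) to realize arbitrary dilation factors. The reciprocal estimate $d(\sigma_p(u),p)\simeq 1/d(u,p)$ that you single out as the main obstacle is indeed used silently in the paper's own computation (in passing from $d(f_1(\sigma_p(a)),f_1(p))$ to $d(a,p)^{-1}$), so flagging it is a genuine improvement in care; just note that deriving it from the bi-Lipschitz extension of $\sigma_p$ to $\sph_p(X)$ is circular as you state it, since that extension (Lemma~\ref{L:BL_inversion_2pt}) is itself proved in the paper using this very estimate, and a direct argument (e.g.\ letting $d(y,p)\to\infty$ and using that $\sigma_p$ is a homeomorphism of $X_p$) is cleaner.
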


\begin{proof}
Let $\sigma_p$ denote an $M$-quasi-inversion of $X$ at $p$. Choose $x\in X_p$, and define the map $g:X\to X$ as $g:=f_3\circ\sigma_p\circ f_2\circ \sigma_p\circ f_1\circ \sigma_p$. Here $f_1:X\to X$ is an $L$-bi-Lipschitz map such that $f_1(p)=x$, $f_2:X\to X$ is an $L$-bi-Lipschitz map such that $f_2(\sigma_p(x))=p$, and $f_3:X\to X$ is an $L$-bi-Lipschitz map such that $f_3(\sigma_p(f_2(p)))=p$. We then observe that $g(p)=p$, and that, for any $a,b\in X$, we have
\begin{align*}
	d(g(a),g(b))&\eqx\frac{d(\sigma_p(f_1(\sigma_p(a))),\sigma_p(f_1(\sigma_p(b))))}{d(f_2(\sigma_p(f_1(\sigma_p(a)))),p)\,d(f_2(\sigma_p(f_1(\sigma_p(b)))),p)}\\
	&=\frac{d(\sigma_p(f_1(\sigma_p(a))),\sigma_p(f_1(\sigma_p(b))))}{d(f_2(\sigma_p(f_1(\sigma_p(a)))),f_2(\sigma_p(x)))\,d(f_2(\sigma_p(f_1(\sigma_p(b)))),f_2(\sigma_p(x)))}\\
	&\eqx\frac{d(\sigma_p(a),\sigma_p(b))}{d(f_1(\sigma_p(a)),p)\,d(f_1(\sigma_p(b)),p)}\cdot\frac{d(f_1(\sigma_p(a)),p)\,d(x,p)}{d(f_1(\sigma_p(a)),x)}\cdot\frac{d(f_1(\sigma_p(b)),p)\,d(x,p)}{d(f_1(\sigma_p(b)),x)}\\
	&=\frac{d(\sigma_p(a),\sigma_p(b))\,d(x,p)^2}{d(f_1(\sigma_p(a)),f_1(p))\,d(f_1(\sigma_p(b)),f_1(p))}\\
	&\eqx\frac{d(a,b)\,d(x,p)^2}{d(a,p)\,d(b,p)}\cdot d(a,p)\,d(b,p)\\
	&=d(x,p)^2\,d(a,b)
\end{align*}
Thus $g:X\to X$ is a $(K,d(x,p)^2)$-quasi-dilation at $p$, where $K=K(L,M)$. 

To verify $(2)$, assume that $X$ is connected. It follows that, for all $r>0$, there exists $x\in X$ such that $d(x,p)^2=r$ and a $(K,r)$-quasi-dilation as constructed above. 

To verify $(1)$, assume that $X$ is $N$-uniformly perfect. By definition, for all $r>0$, there exists a point $x\in X$ such that $\sqrt{2r}/N\leq d(x,p)< \sqrt{2r}$. Therefore, there exists a $(K,s)$-quasi-dilation $f:X\to X$ as constructed above, where $2r/N^2\leq s<2r$. Thus, for every $a,b\in X$, we have
\[\frac{2r}{KN^2}d(a,b)\leq \frac{s}{K}d(a,b)\leq d(f(a),f(b))\leq Ksd(a,b)\leq 2Krd(a,b).\]
Therefore, $f:X\to X$ is a $(2KN^2,r)$-quasi-dilation. 
\end{proof}

We distinguish between the connected and disconnected cases in \rf{L:quasi_dilation} in order to clarify quantitative dependence of the conclusions on the parameters pertaining to the assumptions. In a qualitative sense, a space $X$ satisfying the assumptions of \rf{L:quasi_dilation} is always uniformly perfect. This is the content of the following lemma.

\begin{lemma}\label{L:no_isolated}\label{L:uniformly_perfect}
Suppose that $X$ is an unbounded metric space. If $X$ is $L$-uniformly bi-Lipschitz homogeneous and $M$-quasi-invertible, then $X$ is uniformly perfect and, in particular, it has no isolated points.
\end{lemma}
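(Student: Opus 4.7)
The plan is to first establish that no point of $X$ is isolated, using \rf{L:quasi_dilation} via an inverse quasi-dilation, and then strengthen this to the full uniform perfectness.

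\textbf{Step 1 (no isolated points).} Fix a point $p \in X$ at which an $M$-quasi-inversion $\sigma_p$ exists, and let $C = C(L,M)$ denote the constant produced by \rf{L:quasi_dilation}. For each $x \in X_p$, that lemma furnishes a $(C, d(x,p)^2)$-quasi-dilation $g_x : X \to X$ fixing $p$; its inverse $g_x^{-1}$ is then a $(C, 1/d(x,p)^2)$-quasi-dilation, again fixing $p$. Evaluating at $x$ itself yields
\[
d(g_x^{-1}(x), p) = d(g_x^{-1}(x), g_x^{-1}(p)) \leq \frac{C}{d(x,p)}.
\]
Since $X$ is unbounded, $d(x,p)$ may be taken arbitrarily large, producing points of $X_p$ arbitrarily close to $p$. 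Hence $p$ is a limit point; by $L$-bi-Lipschitz homogeneity no point of $X$ is isolated.

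\textbf{Step 2 (uniform perfectness).} By bi-Lipschitz homogeneity it suffices to exhibit $C' \geq 1$ such that for every $r > 0$ with $B(p;r) \subsetneq X$ the annulus $B(p;r) \setminus B(p;r/C')$ is non-empty. Given such an $r$, choose $y \in X \setminus B(p;r)$ and observe that for every $x \in X_p$ one has
\[
d(g_x^{-1}(y), p) \simeq_C \frac{d(y,p)}{d(x,p)^2}.
\]
The task reduces to selecting $x \in X_p$ with $d(x,p)$ in a multiplicative window of $\sqrt{d(y,p)/r}$ whose log-length depends only on $C$.

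\textbf{Main obstacle.} The set of available dilation factors is $T^2$, where $T := \{d(z,p) : z \in X_p\}$, and the crux is to show that $T$ itself has multiplicative log-gaps bounded by some constant $C'$. The tools are: the approximate multiplicative shift-invariance $T \cdot T^{\pm 2} \subseteq [C^{-1}, C] \cdot T$ supplied by \rf{L:quasi_dilation}, the approximate reciprocal symmetry $T^{-1} \subseteq [C^{-1}, C] \cdot T$ arising from Step 1 applied at every scale, and the unboundedness of $T$ both above (from unboundedness of $X$) and below (from Step 1). The plan is to assume for contradiction that $T$ admits log-gaps of arbitrary size, locate an extreme gap $(a,b) \subset \mathbb{R} \setminus \log T$, take $t_\pm \in T$ just outside $(a,b)$, and combine the shift and symmetry operations keyed to $t_\pm$ to force an element of $T$ into the interior of the gap. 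Converting this ``approximate shift-invariance'' into a quantitative log-gap bound on $T$ is the main technical hurdle I anticipate; once secured, it yields the desired $C'$, and hence uniform perfectness via the reduction of Step 2.
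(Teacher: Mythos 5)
Your Step 1 is correct and is essentially the paper's argument (the paper applies the quasi-inversion directly to a sequence $d(x_i,p)\to\infty$, you apply the inverse quasi-dilation; same mechanism).

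Step 2, however, contains a genuine gap, and you have flagged it yourself: the entire proof of uniform perfectness is reduced to the claim that the distance set $T=\{d(z,p):z\in X_p\}$ has uniformly bounded multiplicative gaps, and that claim is not proved. The sketched plan for it is shaky on its own terms: if $T$ admits log-gaps of arbitrary size there need not be an ``extreme'' gap to localize, and the available relations are weaker than you suggest. For instance, from $st^{\pm2}\in[C^{-1},C]\cdot T$ one only deduces that a gap $(a,b)$ of ratio $G=b/a$ forces $T$ to avoid $(\sqrt{C/G},\,C^{-1/2})$, i.e.\ it reproduces a gap of the much smaller ratio $\sqrt{G}/C$ pinned near $1$; iterating such implications does not obviously close up the gaps, so turning ``approximate shift-invariance of $\log T$'' into a quantitative syndeticity statement is a real piece of work, not a routine verification. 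As written, the proposal proves only the ``no isolated points'' half of the lemma.

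The paper avoids this global analysis of $T$ entirely. It argues by contradiction from a sequence of empty annuli $A(p;r_k,C_kr_k)$ with $C_k\to\infty$: first it uses the quasi-inversion to reduce to the case $r_k\to 0$, then enlarges each annulus to a maximal one so that there is a witness $y_k$ with $d(p,y_k)=C_kr_k$ and (after passing to a subsequence) $C_kr_k\to 0$; finally it applies a \emph{single, fixed} quasi-dilation of factor $\lambda$ with $K^{-1}\lambda>1$ (from \rf{L:quasi_dilation}) to produce empty annuli $A(p;K\lambda r_k,K^{-1}C_k\lambda r_k)$, and observes that $y_k$ would lie in such an annulus as soon as $C_k>K\lambda$ --- a contradiction that caps $C_k$ by a constant. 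The lesson is that one never needs uniform control of all gaps of $T$: it suffices to show that a single sequence of gaps of unbounded ratio self-destructs under one dilation, and the maximality/normalization step is what makes that one dilation enough. I would recommend replacing your Step 2 with an argument of this contradiction-plus-normalization type rather than attempting the syndeticity claim.
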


\begin{proof}
First we prove that $X$ does not contain any isolated points. Let $\sigma_p:X_p\to X_p$ denote a quasi-inversion at $p\in X$, and let $(x_i)_{i=0}^{+\infty}$ denote a sequence of points in $X$ such that $d(p,x_i)\to+\infty$. It follows from the definition of a quasi-inversion that $d(\sigma_p(x_i),p)\to0$. Therefore, we conclude that $p$ is not an isolated point of $X$. By uniform bi-Lipschitz homogeneity, no point of $X$ is isolated.

Suppose $X$ is not uniformly perfect. Via uniform bi-Lipschitz homogeneity, we can assume that there exist positive numbers $r_k>0$ and $C_k\to+\infty$ such that, for each $k\in\mfN$, we have
\begin{equation}\label{E:empty}
A(p;r_k,C_kr_k)=\emptyset.
\end{equation}
If there exists $1\leq C<+\infty$ such that, for all $k\in\msf{N}$, we have $r_k\in[C^{-1},C]$, then $X$ is bounded, in contradiction to the assumption that $X$ is unbounded. Therefore, we may assume that there exists a subsequence of $(r_k)_{k\in\msf{N}}$ either converging to $0$ or diverging to $+\infty$. If there exists a subsequence $r_{n_k}\to+\infty$, then we may use the $M$-quasi-inversion at $p$ to ensure that, for each $k\in\msf{N}$, we have $A(p;M(C_{n_k}r_{n_k})^{-1},(Mr_{n_k})^{-1})=\emptyset$. 

By the above paragraph, we may assume that there exist sequences $r_k\to0$ and $C_k\to+\infty$ such that, for each $k\in \msf{N}$, we have \rf{E:empty}. Since $X$ is unbounded and contains no isolated points, we may assume that these empty annuli are maximal in the sense that there exist $x_k,y_k\in X$ such that $d(p,x_k)=r_k$ and $d(p,y_k)=C_kr_k$. Therefore, up to a subsequence, for each $k\in\msf{N}$ we have $C_{k+1}r_{k+1}\leq r_k$, and so $C_kr_k\to0$. Fix $x\in X$ such that $r:=d(p,x)$ satisfies $r^2>L$. Note that this is possible because $X$ is unbounded. By \rf{L:quasi_dilation} there exists an $(L,r^2)$-quasi-dilation $f:X\to X$ at $p$. Therefore, for every $k\in \mfN$, we have
\[A(p;Lr^2r_k,L^{-1}C_kr^2r_k)=\emptyset.\]
Since $L^{-1}r^2>1$ and $d(p,y_k)=C_kr_k$, it follows that, for every $k\in\mfN$, we have $Lr^2>C_k$. Since $C_k\to+\infty$, this is a contradiction. This contradiction reveals that $X$ must be uniformly perfect. 
\end{proof}

\subsection{Proof of Propositions~\ref{P:quasi-version} and~\ref{general_prop}}

\begin{remark}\label{R:strong}
The inverse of a $\theta$-quasi-M\"obius map is $\theta'$-quasi-M\"obius, where $\theta'(t)=\theta^{-1}(t^{-1})^{-1}$ (see \cite[pg.\,219]{Vaisala-qm}). Therefore, for use below, we remark that the inverse of a  $C$-strongly quasi-M\"obius map   is  $C$-strongly quasi-M\"obius.
\end{remark}

\begin{proof}[Proof of \rf{P:quasi-version}]
We first prove sufficiency. To verify that $X$ is uniformly bi-Lipschitz homogeneous, we proceed as in Proposition~\ref{P:equivalence1b}. Let $h:\sph_p(X)\to \sph_p(X)$ denote a $C$-strongly quasi-M\"obius map such that $h(\infty)=\infty$. We claim that $h$ is a quasi-similarity of $X$. In other words, there exists $L=L(C)$ and $\lambda>0$ such that, for any $a,b\in X$, we have
\[d(h(a),h(b))\eqx_L \lambda\, d(a,b).\]
To verify this claim, let $a,b,c\in X$ be a triple of distinct points. Then we have
\begin{align*}
d(h(a),h(b))&\eqx_4 \hat{d}_p(h(a),h(b))(1+d(h(a),p))(1+d(h(b),p))\\
&=\frac{\hat{d}_p(h(a),h(b))\hat{d}_p(h(c),\infty)}{\hat{d}_p(h(c),\infty)}(1+d(h(a),p))(1+d(h(b),p))\\
&\eqx_C\frac{\hat{d}_p(a,b)\hat{d}_p(c,\infty)}{\hat{d}_p(a,\infty)\hat{d}_p(b,c)}\frac{(1+d(h(a),p))(1+d(h(b),p))}{\hat{d}_p(h(c),\infty)}\hat{d}_p(h(a),\infty)\hat{d}_p(h(b),h(c))\\
&\eqx_{4^7}d(a,b)\frac{d(h(b),h(c))}{d(b,c)}.
\end{align*}
Here we have used \rf{R:strong} and omitted some of the straightforward calculations. Since the above comparability statements hold for any triple of distinct points $a,b,c\in X$, we conclude that $d(h(b),h(c))\eqx_L \lambda\, d(b,c)$ for some $L=L(C)$ and $\lambda>0$. Therefore, any $C$-strongly quasi-M\"obius map of $\sph_p(X)$ fixing $\infty$ is quasi-similarity mapping of $X$. 

Given any $a\in X$, let $h:\sph_p(X)\to\sph_p(X)$ denote a $C$-strongly quasi-M\"obius map fixing $\infty$ such that $h(a)=p$. Let $\lambda>0$ and $L=L(C)$ denote the corresponding constants such that, for any $x,y\in X$, we have $d(h(x),h(y))\eqx_L \lambda\, d(x,y)$. If $L^{-1}\leq \lambda\leq L$, then we conclude that $h$ is $L^2$-bi-Lipschitz. If $\lambda<L^{-1}$ (or $\lambda>L$) then $h$ (or $h^{-1}$) is a strict contraction mapping $X$ to itself. Since $X$ is proper, it is complete. Therefore, by the Banach Fixed Point theorem, there exists a point $o\in X$ such that $h(o)=o$. Now let $g:\sph_p(X)\to \sph_p(X)$ denote a $C$-strongly quasi-M\"obius map fixing $\infty$ and sending $o$ to $p$. Write $\mu>0$ and $M=M(C)$ to denote constants such that, for any $x,y\in X$, we have $d(g(x),g(y))\eqx_M\mu\, d(x,y)$. We consider the map $g\circ h^{-1} \circ g^{-1}\circ h$. First, we note that this map sends $a$ to $p$. Then, we note that this map is $(ML)^2$-bi-Lipschitz. It follows that $X$ is uniformly bi-Lipschitz homogeneous. 

Next, we demonstrate that $X$ admits a quasi-inversion. To this end, let $f:\sph_p(X)\to\sph_p(X)$ denote a $C$-strongly quasi-M\"obius map such that $f(p)=\infty$ and $f(\infty)=p$. Then, for any $a,b\in X_p$ such that $a\not=b$, we have
\begin{align*}
d(p,f(a))&\eqx_4\hat{d}_p(p,f(a))(1+d(p,f(a)))\\
&=\hat{d}_p(p,f(a))\hat{d}_p(\infty,f(b))\frac{(1+d(p,f(a)))}{\hat{d}_p(\infty,f(b))}\\
&\eqx_C\frac{\hat{d}_p(\infty,a)\hat{d}_p(p,b)}{\hat{d}_p(\infty,b)\hat{d}_p(a,p)}\frac{1+d(p,f(a))}{\hat{d}_p(\infty,f(b)}\hat{d}_p(p,f(b))\hat{d}_p(f(a),\infty)\\
&\eqx_{4^7}\frac{1}{d(a,p)}\frac{(1+d(p,f(a)))(1+d(a,p))(1+d(b,p))(1+d(f(b),p))}{(1+d(p,f(a)))(1+d(a,p))(1+d(b,p))(1+d(f(b),p))}d(p,b)d(p,f(b))\\
&=\frac{d(p,b)d(p,f(b))}{d(a,p)}.
\end{align*}
The above statement again utilizes \rf{R:strong}. Since the above comparabilities hold for any $b\not=a$ in $X$, we conclude that there exist constants $L=L(C)$ and $r>0$ such that, for any $b\in X$, we have 
\begin{equation}\label{E:scale}
d(f(b),p)\eqx_L r\cdot d(p,b)^{-1}.
\end{equation}
Now let $a,b\in X_p$ be such that $a\not=b$. Using the same function $f$ as above, we observe that
\begin{align*}
d(f(a),f(b))&\eqx_4\hat{d}_p(f(a),f(b))(1+d(f(a),p))(1+d(f(b),p))\\
&=\hat{d}_p(f(a),f(b))\hat{d}_p(f(p),f(\infty))(1+d(f(a),p))(1+d(f(b),p))\\
&\eqx_C\frac{\hat{d}_p(a,b)\hat{d}_p(p,\infty)}{\hat{d}_p(a,\infty)\hat{d}_p(b,p)}\hat{d}_p(f(a),p)\hat{d}_p(f(b),\infty)(1+d(f(a),p))(1+d(f(b),p))\\
&\eqx_{4^6}\frac{d(a,b)d(f(a),p)}{d(b,p)}\frac{(1+d(f(a),p))(1+d(f(b),p))(1+d(a,p))(1+d(b,p))}{(1+d(f(a),p))(1+d(f(b),p))(1+d(a,p))(1+d(b,p))}\\
&=\frac{d(a,b)d(f(a),p)}{d(b,p)}\eqx_{Lr}\frac{d(a,b)}{d(a,p)d(b,p)},
\end{align*}
where  the final comparison follows from \rf{E:scale}. Therefore, $f$ is a quasi-inversion of $X$.  

\medskip
To prove necessity, we assume that $X$ is uniformly $L$-bi-Lipschitz homogeneous and admits a $K$-quasi-inversion at some point $p\in X$. To confirm that $\sph_p(X)$ is 2-point uniformly strongly quasi-M\"obius homogeneous, we mimic the proof of \rf{P:equivalence1a}. Given $p\in X$, let $\sigma_p$ denote a $K$-quasi-inversion of $X$ at $p$. We show that every point $(a,b)\in(\sph_p(X)\times\sph_p(X))\setminus \Delta$ can be mapped to $(\infty,p)$ via a uniformly strongly quasi-M\"obius map of $\sph_p(X)$. If $a=\infty$, then simply map $b$ to $p$ via an $L$-bi-Lipschitz map of $X$. Here we note that any $L$-bi-Lipschitz map of $X$ is an $L^4$-strongly quasi-M\"obius map of $\sph_p(X)$. If $a\not=\infty$, then we map $a$ to $p$ via an $L$-bi-Lipschitz map of $X$ before applying $\sigma_p$. This composition is an $(LK)^4$-strongly quasi-M\"obius map of $\sph_p(X)$. Thus we return to the case that $a=\infty$. 
\end{proof}

\begin{proof}[{Proof of Proposition~\ref{general_prop}}]
Assume $f:X\to Y$ is $L$-bi-Lipschitz, then $f$ is $L^4$-strongly quasi-M\"obius. Furthermore, we note that if $f$ is a similarity mapping, then $f$ is M\"obius.

Conversely, assume that $h:X\to Y$ is $C$-strongly quasi-M\"obius. We first claim that $h$ extends homeomorphically to $h:\hat{X}\to\hat{Y}$ such that $h(\infty)=\infty$. Indeed, because $X$ and $Y$ are proper, both $h$ and $h^{-1}$ must send bounded sets to bounded sets. The claim follows. Therefore, we may view $h$ as a $C$-strongly quasi-M\"obius map $h:\Sph_p(X)\to\Sph_q(Y)$ for some points $p\in X$ and $q\in Y$. 

Let $a,b,c\in X$ be a triple of distinct points. We observe that
\begin{align*}
d(h(a),h(b))&=s_p(h(a),h(b))(1+d(h(a),p))(1+d(h(b),p))\\
&=\frac{s_p(h(a),h(b))s_p(h(c),\infty)}{s_p(h(c),\infty)}(1+d(h(a),p))(1+d(h(b),p))\\
&\eqx_C\frac{s_p(a,b)s_p(c,\infty)}{s_p(a,\infty)s_p(b,c)}\frac{(1+d(h(a),p))(1+d(h(b),p))}{s_p(h(c),\infty)}s_p(h(a),\infty)s_p(h(b),h(c))\\
&=\frac{d(a,b)d(h(b),h(c))(1+d(a,p))(1+d(b,p))(1+d(c,p))}{d(b,c)(1+d(a,p))(1+d(b,p))(1+d(c,p))}\,\cdot\\
& \hspace{64pt}\cdot\,\frac{(1+d(h(a),p))(1+d(h(b),p))(1+d(h(c),p))}{(1+d(h(a),p))(1+d(h(b),p))(1+d(h(c),p))}\\
&=d(a,b)\frac{d(h(b),h(c))}{d(b,c)}.
\end{align*}
Since the above equalities hold for any triple of distinct points $a,b,c\in X$, we conclude that there exists $\lambda>0$ such that, for any $a,b\in X$, we have $d(h(a),h(b))\eqx_C\lambda\cdot d(a,b)$. Therefore, $h$ is $(C\lambda)$-bi-Lipschitz. When $C=1$, the map $h$ is a $\lambda$-similarity. 
\end{proof}

\subsection{Characterizing quasi-invertibility}\label{S:char}

This subsection records a few useful technical results and culminates in the statement and proof of of \rf{P:invertible_char}. We begin with the following lemma, which extends \cite[Lemma 3.2]{BHX-inversions} in the case of quasi-sphericalization.

\begin{lemma}\label{L:BL_on_sphere}
Let $f:X\to X$ be a homeomorphism of a metric space, and let $p\in X$. If $f$ is $L$-bi-Lipschitz, then $f:\sph_p(X)\to\sph_p(X)$ is $C$-bi-Lipschitz, where $C=C(L,d(f(p),p))$. 
\end{lemma}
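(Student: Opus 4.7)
The plan is to first compare the quasi-distance $s_p$ on $f$-images with $s_p$ on the originals, then transfer the comparison to $\hat d_p$ via \eqref{E:sphere_dist}, which asserts $\hat d_p\simeq_4 s_p$. Set $D:=d(f(p),p)$; everything I produce will depend only on $L$ and $D$.

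The key preliminary step is to show that
\[
1+d(f(x),p)\ \simeq_{C'}\ 1+d(x,p)\qquad\text{for all }x\in X,
\]
with $C'=C'(L,D)$. The upper bound is immediate from the triangle inequality and the Lipschitz estimate:
\[
1+d(f(x),p)\ \le\ 1+d(f(x),f(p))+D\ \le\ 1+L\,d(x,p)+D\ \le\ (1+L+D)\bigl(1+d(x,p)\bigr).
\]
The lower bound is the main obstacle, since the naive estimate $d(f(x),p)\ge L^{-1}d(x,p)-D$ may be negative when $d(x,p)$ is small. To bypass this, I would apply the upper bound to $f^{-1}$, which is also $L$-bi-Lipschitz. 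Noting that $d(f^{-1}(p),p)=d(f^{-1}(p),f^{-1}(f(p)))\le LD$, the upper bound applied to $f^{-1}$ with $y=f(x)$ yields
\[
1+d(x,p)\ =\ 1+d\bigl(f^{-1}(f(x)),p\bigr)\ \le\ (1+L+LD)\bigl(1+d(f(x),p)\bigr),
\]
so one may take $C':=1+L+LD$.

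With this in hand, for any $x,y\in X$ I would substitute directly into the definition of $s_p$:
\[
s_p(f(x),f(y))\ =\ \frac{d(f(x),f(y))}{(1+d(f(x),p))(1+d(f(y),p))}\ \simeq_{LC'^{2}}\ s_p(x,y),
\]
using the $L$-bi-Lipschitzness on the numerator and the comparison above on each denominator factor. Since the extension $f(\infty)=\infty$ is forced on $\sph_p(X)$, the same reasoning gives $s_p(f(x),\infty)\simeq_{C'} s_p(x,\infty)$.

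Finally, I would invoke \eqref{E:sphere_dist} to pass from $s_p$ to $\hat d_p$: for any $a,b\in\hat X$,
\[
\hat d_p(f(a),f(b))\ \le\ s_p(f(a),f(b))\ \le\ LC'^{2}\,s_p(a,b)\ \le\ 4LC'^{2}\,\hat d_p(a,b),
\]
and symmetrically for the reverse inequality. This shows that $f:\sph_p(X)\to\sph_p(X)$ is $C$-bi-Lipschitz with $C=4LC'^{2}=4L(1+L+LD)^{2}$, depending only on $L$ and $D=d(f(p),p)$, as required.
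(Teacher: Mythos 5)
Your proof is correct, and its overall strategy coincides with the paper's: establish the two-sided comparison $1+d(f(x),p)\simeq 1+d(x,p)$ with constants depending only on $L$ and $D=d(f(p),p)$, substitute into $s_p$, and pass to $\hat d_p$ via the factor-of-$4$ comparability in \eqref{E:sphere_dist}. The one place where you genuinely diverge is the reverse inequality $1+d(x,p)\le C'\bigl(1+d(f(x),p)\bigr)$: the paper obtains it by a case analysis on whether $d(f(a),p)\le 1$, with a further split in the second case according to whether $d(f(a),p)\ge (2L)^{-1}d(a,p)$, producing a constant $\max\{C_1,C_2,C_3\}$. You instead observe that the easy upper bound, applied to the $L$-bi-Lipschitz map $f^{-1}$ (whose displacement of $p$ is controlled by $d(f^{-1}(p),p)\le LD$), immediately yields the reverse direction with the explicit constant $C'=1+L+LD$. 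This symmetry trick is cleaner, avoids the case analysis entirely, and gives a sharper, fully explicit final constant $C=4L(1+L+LD)^2$; the paper's route is more hands-on but proves nothing extra. Both arguments are complete and quantitatively of the same character.
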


\begin{proof}
Given $a\in X$, we first note that, for   $C_0:=L(1+d(f(p),p))$,
\begin{align*}
1+d(f(a),p)&\leq1+d(f(a),f(p))+d(f(p),p)\\
&\leq 1+Ld(a,p)+d(f(p),p)\leq C_0(1+d(a,p)).
\end{align*}
 Therefore, given two points $a,b\in X$, we have
\begin{equation}\label{E:lower_dilate}
\hat{d}_p(f(a),f(b))\geq\frac{d(a,b)}{4LC_0^2(1+d(a,p))(1+d(b,p))}\geq\frac{\hat{d}_p(a,b)}{4LC_0^2}.
\end{equation}

To obtain a relevant upper bound on $\hat{d}_p(f(a),f(b))$, we consider two cases. 

\textit{Case 1:} $d(f(a),p)\leq 1$. In this case, we note that, for $C_1:=(1+L+Ld(f(p),p))$,
\begin{align*}
1+d(a,p)&\leq 1+Ld(f(a),f(p))\leq 1+L(d(f(a),p)+d(f(p),p))\\
&\leq 1+L+Ld(f(p),p)\leq C_1(1+d(f(a),p)).
\end{align*}

\textit{Case 2:} $d(f(a),p)>1$. We consider two subcases. First, suppose that $d(f(a),p)\geq (2L)^{-1}d(a,p)$. Then we note that, for $C_2:=2L$,
\begin{align*}
1+d(a,p)\leq 1+2Ld(f(a),p)\leq C_2(1+d(f(a),p)).
\end{align*}
 Next, suppose that $d(f(a),p)<(2L)^{-1}d(a,p)$. Then we note that
\begin{align*}
d(f(p),p)&\geq d(f(p),f(a))-d(f(a),p)\geq L^{-1}d(a,p)-d(f(a),p)\geq (2L)^{-1}d(a,p).
\end{align*}
Therefore, $d(a,p)\leq 2Ld(f(p),p)\leq 2Ld(f(p),p)d(f(a),p)$, and so, for $C_3:=2Ld(f(p),p)>1$,
\[1+d(a,p)\leq C_3(1+d(f(a),p)),\]

Considering Case 1 and Case 2 together, we conclude that, for any two points $a,b\in X$, we have  
\begin{equation}\label{E:upper_dilate}
\hat{d}_p(f(a),f(b))\leq \frac{LC_4^2\,d(a,b)}{(1+d(a,p))(1+d(b,p))}\leq 4LC_4^2\,\hat{d}_p(a,b),
\end{equation}
where $C_4=\max\{C_1,C_2,C_3\}$. Combining \rf{E:lower_dilate} and \rf{E:upper_dilate}, we reach the desired conclusion.
\end{proof}

\rf{L:BL_on_sphere} can be used to prove the following lemma  regarding the behavior of quasi-inversions with respect to the quasi-sphericalized distance. 

\begin{lemma}\label{L:BL_inversion_2pt}
Suppose $X$ is an $L$-bi-Lipschitz homogeneous metric space. For $p,x\in X$, any $M$-quasi-inversion $\sigma_x:X_x\to X_x$ is a $C$-bi-Lipschitz self-homeomorphism of $\sph_p(X)$, with $C=C(L,M,d(p,x))$. If $p=x$, then we reach the same conclusion with $C=4M^3$. 
\end{lemma}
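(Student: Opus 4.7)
The strategy has two stages: establish the base case $p = x$ by direct computation, then reduce the general case to it via $L$-bi-Lipschitz homogeneity combined with \rf{L:BL_on_sphere}.

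For the $p = x$ case, we first extend $\sigma_p$ to a self-map of $\hat X$ by declaring $\sigma_p(p) := \infty$ and $\sigma_p(\infty) := p$. The technical heart of the argument is the auxiliary estimate
\begin{equation}\label{E:aux_BL_inv}
d(\sigma_p(a), p) \simeq_M \frac{1}{d(a, p)}, \qquad a \in X_p,
\end{equation}
which is equivalent to saying that $\sigma_p$ extends to a bi-Lipschitz map $\Inv_p(X) \to X$ sending $\infty$ to $p$ (cf.\ \cite{BHX-inversions}). The plan is to derive \eqref{E:aux_BL_inv} by fixing $a$ and passing to the limit $d(b, p) \to \infty$ in the quasi-inversion identity: in this limit $\sigma_p(b) \to p$ and $d(a,b)/d(b,p) \to 1$, so $d(\sigma_p(a), \sigma_p(b)) \to d(\sigma_p(a), p)$ while the right-hand side $\simeq_M d(a,b)/(d(a,p) d(b,p))$ tends to $1/d(a,p)$. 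Granted \eqref{E:aux_BL_inv}, a direct computation for $a, b \in X_p$ yields $s_p(\sigma_p(a), \sigma_p(b)) \simeq_{M^3} s_p(a, b)$, where the $M^3$ accumulates from one factor of $M$ in the numerator (the quasi-inversion estimate) and one in each of the two denominator factors, via $1 + d(\sigma_p(\cdot), p) \simeq_M (d(\cdot, p) + 1)/d(\cdot, p)$. The boundary cases (one or both of $a, b$ equal to $p$ or $\infty$) are shorter calculations using the explicit formula $\hat d_p(y, \infty) = 1/(1 + d(y,p))$ and yield strictly smaller constants. Converting $s_p$ to $\hat d_p$ via $\tfrac{1}{4} s_p \le \hat d_p \le s_p$ introduces a factor of $4$, producing the final bound $4 M^3$.

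For the general case $p \ne x$, we choose an $L$-bi-Lipschitz $f: X \to X$ with $f(p) = x$ by bi-Lipschitz homogeneity, and set $\tilde\sigma := f^{-1} \circ \sigma_x \circ f$. A direct substitution (using $d(f(a), x) = d(f(a), f(p)) \simeq_L d(a, p)$ and $d(f(a), f(b)) \simeq_L d(a, b)$) shows that $\tilde\sigma$ is an $(L^4 M)$-quasi-inversion at $p$. By the $p = x$ case, $\tilde\sigma$ is a $4(L^4 M)^3$-bi-Lipschitz self-map of $\sph_p(X)$. Applying \rf{L:BL_on_sphere} to both $f$ and $f^{-1}$ --- whose displacements of $p$ are controlled by $d(p, x)$ and $L$, namely $d(f(p), p) = d(p, x)$ and $d(f^{-1}(p), p) \le L\, d(p, x)$ --- ensures that both act as bi-Lipschitz self-maps of $\sph_p(X)$ with constants depending only on $L$ and $d(p, x)$. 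The identity $\sigma_x = f \circ \tilde\sigma \circ f^{-1}$ then yields the claimed bound $C = C(L, M, d(p, x))$.

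The main obstacle I anticipate is justifying the auxiliary estimate \eqref{E:aux_BL_inv}: the limit argument requires a sequence $b_n \in X_p$ with $d(b_n, p) \to \infty$ and $\sigma_p(b_n) \to p$ simultaneously. Unboundedness of $X$ (which is implicit in quasi-invertibility) supplies $b_n$ with $d(b_n, p) \to \infty$; the convergence $\sigma_p(b_n) \to p$ reflects the continuous extension of $\sigma_p$ to the one-point compactification of $X$, which can be justified via properness of $X$ or by using surjectivity of $\sigma_p$ onto $X_p$: for every $\varepsilon > 0$ there exists $b \in X_p$ with $d(\sigma_p(b), p) < \varepsilon$, and such $b$ must have $d(b, p)$ large, since otherwise $\sigma_p(b)$ would cluster away from $p$ by continuity of $\sigma_p$ on $X_p$.
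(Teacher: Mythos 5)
Your proposal is correct and follows essentially the same route as the paper: a direct computation at the base point $p=x$ giving the constant $4M^3$ (via $\tfrac14 s_p\le \hat d_p\le s_p$), followed by conjugation with an $L$-bi-Lipschitz map to produce an $L^4M$-quasi-inversion at $p$ and an appeal to \rf{L:BL_on_sphere} for the general case. The only difference is that you explicitly isolate and justify the auxiliary estimate $d(\sigma_p(a),p)\simeq_M 1/d(a,p)$ via a limiting argument, whereas the paper uses this bound silently in its displayed chain of inequalities; your added justification is sound (given unboundedness and properness, which hold wherever the lemma is applied) and if anything makes the argument more complete.
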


\begin{proof}
Let $x,y\in X$ be given, and fix a point $p\in X$. Then 
\begin{align*}
\hat{d}_p(\sigma_p(x),\sigma_p(y))&\leq\frac{d(\sigma_p(x),\sigma_p(y))}{(1+d(\sigma_p(x),p))(1+d(\sigma_p(y),p))}\\
&\leq\frac{M\,d(x,y)}{d(p,x)d(p,y)}\frac{1}{(1+(M\,d(p,x))^{-1})(1+(M\,d(p,y))^{-1})}\\
&=\frac{M\,d(x,y)}{(d(p,x)+M^{-1})(d(p,y)+M^{-1})}\leq 4M^3\hat{d}_p(x,y).
\end{align*}
On the other hand, we have
\begin{align*}
\hat{d}_p(\sigma_p(x),\sigma_p(y))&\geq \frac{d(x,y)}{4M\,d(p,x)d(p,y)}\frac{1}{(1+Md(p,x)^{-1})(1+Md(p,y)^{-1})}\\
&=\frac{d(x,y)}{4M(d(p,x)+M)(d(p,y)+M)}\geq\frac{\hat{d}_p(x,y)}{4M^3}.
\end{align*}
Similar calculations produce the same conclusion when $y=\infty$ or $x=\infty$. Thus we reach the desired conclusion when $p=x$. 

Now let $f$ denote an $L$-bi-Lipschitz self-homeomorphism $f:X\to X$ such that $f(x)=p$. Then we note that $f\circ \sigma_x\circ f^{-1}: X_p\to X_p$ is an $L^4M$-quasi-inversion at $p$. It follows from the above estimates and \rf{L:BL_on_sphere} that $\sigma_x:\sph_p(X)\to\sph_p(X)$ is $C$-bi-Lipschitz, where $C=C(L,M,d(f(p),p))$. Since $d(f(p),p)\eqx_Ld(p,x)$, we reach the desired conclusion.
\end{proof}

Before stating and proving \rf{P:invertible_char} we record the following observations describing the metric implications of iterated quasi-sphericalizations and/or quasi-inversions. These observations are analogous to \cite[Propositions 3.3 and 3.4]{BHX-inversions}.

\begin{lemma}\label{L:iterations}
Suppose $X$ is an unbounded metric space and $p\in X$. 
\begin{enumerate}
	\item{The space $\inv_p(\sph_p(X))$ is bi-Lipschitz equivalent to $\inv_p(X)$ via the identity map.}
	\item{The space $\sph_\infty(\inv_p(X))$ is bi-Lipschitz equivalent to $\sph_p(X)$ via the identity map.}
\end{enumerate}
\end{lemma}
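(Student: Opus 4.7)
The plan is to reduce both parts to direct algebraic computations. Since the definitions of the metrics $d_p$ and $\hat{d}_p$ guarantee $d_p\eqx_4 i_p$ on $\hat{X}_p$ and $\hat{d}_p\eqx_4 s_p$ on $\hat{X}$, it suffices to verify the corresponding identities at the level of the quasi-distances $i_p$ and $s_p$; the eventual bi-Lipschitz constants will be of the form $16=4\cdot 4$, coming from two successive passages between the quasi-distance and the metric.

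For part (1), the key observation is that $s_p(x,p)=d(x,p)/(1+d(x,p))$ and $s_p(\infty,p)=1$. Substituting into the defining formula for $i_p$ applied inside the space $\sph_p(X)$ (so that $s_p$ plays the role of the underlying distance) produces, after cancellation,
\[
\frac{s_p(x,y)}{s_p(x,p)\,s_p(y,p)}=\frac{d(x,y)}{d(x,p)\,d(y,p)}=i_p(x,y)
\]
for any $x,y\in X_p$, and analogously
\[
\frac{s_p(x,\infty)}{s_p(x,p)\,s_p(\infty,p)}=\frac{1}{d(x,p)}=i_p(x,\infty),
\]
where one identifies the point $\infty$ of $\sph_p(X)=\hat X$ with the point $\infty$ of $\inv_p(X)=\hat X_p$. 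Together these identities show that the metric on $\inv_p(\sph_p(X))$ and the metric on $\inv_p(X)$ agree up to a multiplicative constant bounded by $16$.

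For part (2) the calculation is symmetric. Since $i_p(x,\infty)=1/d(x,p)$, applying the formula for $s_\infty$ inside the space $\inv_p(X)$ yields
\[
\frac{i_p(x,y)}{(1+i_p(x,\infty))(1+i_p(y,\infty))}=\frac{d(x,y)}{(1+d(x,p))(1+d(y,p))}=s_p(x,y)
\]
for $x,y\in X_p$, together with
\[
\frac{1}{1+i_p(x,\infty)}=\frac{d(x,p)}{1+d(x,p)}=s_p(x,p).
\]
This tells us that the formal point at infinity introduced by the outer sphericalization is naturally identified with $p\in\sph_p(X)$; with this identification one again gets comparability up to a constant bounded by $16$.

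The main obstacle, and really the only subtle point, is bookkeeping the two potential points at infinity that appear in each iterated construction: the $\infty$ inherited from the inner operation, and the $\infty$ (or, dually, the removed basepoint $p$) introduced by the outer one. Once one fixes the natural identifications dictated by the computations above (a common $\infty$ in (1); the swap of $p$ and $\infty$ in (2)), the underlying sets of the two spaces in each statement coincide and the phrase ``the identity map'' is unambiguous. With this convention in place, the displayed identities immediately yield both bi-Lipschitz equivalences.
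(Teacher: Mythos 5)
Your proof is correct and takes essentially the same route as the paper: both arguments reduce the claim to the exact algebraic identities $s_p(x,y)/(s_p(x,p)\,s_p(y,p))=i_p(x,y)$ and $i_p(x,y)/((1+i_p(x,\infty))(1+i_p(y,\infty)))=s_p(x,y)$ (including the checks at $\infty$), and then invoke the $4$-comparability of $d_p$ with $i_p$ and of $\hat{d}_p$ with $s_p$. The only quibble is the constant: since the inner metric enters each ratio three times, the resulting bi-Lipschitz constant is $64$ rather than $16$, which is immaterial to the statement.
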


\begin{proof}
The lemma follows from \rf{E:inverted} and \rf{E:sphere_dist}. We first prove $(1)$. Let $d'$ denote the quasi-inverted distance $(\hat{d}_p)_p$ on $\hat{X}_p$. For any $x,y\in X_p$, we have
\[d'(x,y)\eqx\frac{\hat{d}_p(x,y)}{\hat{d}_p(x,p)\hat{d}_p(y,p)}\eqx\frac{d(x,y)}{d(x,p)d(y,p)}\frac{(1+d(x,p))(1+d(y,p))}{(1+d(x,p))(1+d(y,p))}\eqx d_p(x,y).\]
If $y=\infty$, then we note that 
\[d'(x,\infty)\eqx\frac{\hat{d}_p(x,\infty)}{\hat{d}_p(x,p)\hat{d}_p(\infty,p)}\eqx\frac{1+d(x,p)}{d(x,p)}\frac{1}{1+d(x,p)}\eqx d_p(x,\infty).\]

To prove $(2)$, let $d''$ denote the quasi-sphericalized distance $\widehat{(d_p)}_\infty$ on $\hat{X}$. For any $x,y\in X$, we have
\begin{align*}
d''(x,y)&\eqx\frac{d_p(x,y)}{(1+d_p(x,\infty))(1+d_p(y,\infty))}\\
&=\frac{d(x,y)}{d(x,p)\,d(y,p)}\frac{1}{(1+d(x,p)^{-1})(1+d(y,p)^{-1})}\\
&=\frac{d(x,y)}{(1+d(x,p))(1+d(y,p))}\eqx \hat{d}_p(x,y).
\end{align*}
If $y=\infty$, similar calculations reveal that $d''(x,\infty)\eqx\hat{d}_p(x,\infty)$. 
\end{proof}

At this point we are ready to state and prove \rf{P:invertible_char}. As stated above, the purpose of this result is to provide equivalent characterizations of quasi-invertibility under the assumption that $X$ is uniformly bi-Lipschitz homogeneous. 

\begin{proposition}\label{P:invertible_char}
Suppose $X$ is an unbounded and uniformly bi-Lipschitz homogeneous metric space. Given any point $p\in X$, the following statements are equivalent:
\begin{enumerate}
  \item{$X$ admits a quasi-inversion at $p$.}
  \item{$X$ is bi-Lipschitz equivalent to $\inv_p(X)$.}
  \item{$\inv_p(X)$ is uniformly bi-Lipschitz homogeneous.}
  \item{$\sph_p(X)$ is uniformly bi-Lipschitz homogeneous.}
\end{enumerate}
\end{proposition}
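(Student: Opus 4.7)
The plan is to establish $(1)\Leftrightarrow(4)$ as the main technical equivalence, and derive the other implications using the duality between $X$ and $\inv_p(X)$ interchanged by $\sigma_p$ and $\sigma_p^{-1}$, together with the iteration identities in Lemma \rf{L:iterations}. A useful preliminary observation is that if $\sigma_p$ is a quasi-inversion of $X$ at $p$, then $\sigma_p^{-1}$ is a quasi-inversion of $\inv_p(X)$ at $\infty$: using $d_p\simeq i_p$ and $d_p(\cdot,\infty)=1/d(\cdot,p)$, one computes $d_p(\sigma_p^{-1}(x),\sigma_p^{-1}(y))\simeq d(x,y)\simeq d_p(x,y)/(d_p(x,\infty)d_p(y,\infty))$, so the condition (1) is stable under passage to the pair $(\inv_p(X),\infty)$.

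For $(1)\Rightarrow(4)$: Lemma \rf{L:BL_inversion_2pt} (specialized to $x=p$) shows that $\sigma_p$ is already a $4M^3$-bi-Lipschitz self-homeomorphism of $\sph_p(X)$ swapping $p$ and $\infty$. To move an arbitrary $a\in X_p$ to $p$ by a uniformly bi-Lipschitz self-map of $\sph_p(X)$ I split into cases. If $d(a,p)\leq 1$, uniform bi-Lipschitz homogeneity of $X$ combined with Lemma \rf{L:BL_on_sphere} gives the required map with constant depending only on $L$. If $d(a,p)>1$, fixing a reference point $y_0\in X_p$ with $d(y_0,p)=1$, the quasi-inversion inequality $d(\sigma_p(a),\sigma_p(y_0))\simeq d(a,y_0)/d(a,p)\leq 2$ confines $\sigma_p(a)$ to a uniformly bounded neighborhood of $p$; composing $\sigma_p$ with a short-range bi-Lipschitz self-map of $X$ yields the desired self-map.

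For $(4)\Rightarrow(1)$: let $f:\sph_p(X)\to\sph_p(X)$ be an $L$-bi-Lipschitz self-homeomorphism with $f(p)=\infty$. The bi-Lipschitz inequality at $\infty$ via \eqref{E:sphere_dist} gives the multiplicative ratio $(1+d(f(x),p))/(1+d(x,p))\simeq 1/d(x,p)$, and substituting this into the full inequality $\hat{d}_p(f(x),f(y))\simeq\hat{d}_p(x,y)$ produces $d(f(x),f(y))\simeq d(x,y)/(d(x,p)d(y,p))$ on $X_p$ after cancellation of the factors $1+d(f(\cdot),p)$. Since $f$ bijects $X_p$ onto $X$ rather than $X_p$, a local modification near the single exceptional point $q=f^{-1}(p)\in X_p$ (possible since $X$ has no isolated points by Lemma \rf{L:uniformly_perfect}) produces a genuine self-homeomorphism of $X_p$ with the quasi-inversion property.

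The remaining implications close the loop. $(1)\Rightarrow(2)$: composing $f$ with a further bi-Lipschitz self-map of $\sph_p(X)$ (available from the u-BL homogeneity of $\sph_p(X)$ just obtained) allows us to arrange $f(\infty)=p$; the restriction $f|_X$ is then a bi-Lipschitz bijection $X\to\inv_p(X)$, the matching at the exchanged points $p$ and $\infty$ being guaranteed by $\hat{d}_p(f(x),\infty)\simeq\hat{d}_p(x,p)$ combined with \eqref{E:inverted}. $(2)\Rightarrow(3)$ is immediate since uniform bi-Lipschitz homogeneity is preserved under bi-Lipschitz equivalences. $(3)\Rightarrow(1)$: by applying the $(4)\Rightarrow(1)$ argument to the unbounded, uniformly bi-Lipschitz homogeneous space $\inv_p(X)$ at its point $\infty$ (the requisite u-BL homogeneity of $\sph_\infty(\inv_p(X))\simeq\sph_p(X)$ from Lemma \rf{L:iterations}(2) being produced by a parallel $(1)\Rightarrow(4)$-type construction within $\inv_p(X)$, using the u-BL homogeneity of $X$ to furnish the short-range self-maps), one obtains a quasi-inversion of $\inv_p(X)$ at $\infty$, which by the preliminary duality yields a quasi-inversion of $X$ at $p$. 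The main obstacle is the $(1)\Rightarrow(2)$ step: upgrading a bi-Lipschitz self-map of $\sph_p(X)$ to a bi-Lipschitz equivalence between the distinct metric structures of $X$ and $\inv_p(X)$, in particular arranging $f$ to swap $p$ and $\infty$ using only one-point bi-Lipschitz homogeneity of $\sph_p(X)$ and reconciling the three metrics $d$, $\hat{d}_p$, $d_p$ at the exchanged extremal points.
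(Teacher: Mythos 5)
Your $(1)\Rightarrow(4)$ direction is essentially the paper's argument (the case split on $d(a,p)\leq 1$ versus $d(a,p)>1$, using \rf{L:BL_on_sphere} and \rf{L:BL_inversion_2pt}), and your computation showing that a bi-Lipschitz self-map $f$ of $\sph_p(X)$ with $f(\infty)=p$ restricts to a bi-Lipschitz bijection from $X$ onto $\inv_p(X)$ is a sound, more hands-on substitute for the paper's appeal to the iteration results of \cite{BHX-inversions} and \rf{L:iterations}. The genuine gap is in your $(4)\Rightarrow(1)$. A bi-Lipschitz self-homeomorphism $f$ of $\sph_p(X)$ with $f(p)=\infty$ carries $X_p$ onto $X\setminus\{f(\infty)\}$, not onto $X_p$: the image wrongly contains $p$ (the image of $f^{-1}(p)$) and wrongly omits $f(\infty)$, and these are two distinct points of $X$. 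There is no ``local modification near the single exceptional point'' that repairs this: the proposition's hypotheses place no connectivity or manifold structure on $X$ (the disconnected case is a live one in this paper), so there is no reason a homeomorphic re-routing exists, and even if it did you would have to check it preserves the quasi-inversion estimate globally. One-point homogeneity of $\sph_p(X)$ hands you maps sending $p$ to $\infty$ and maps sending $\infty$ to $p$, but not a single map doing both, and post-composing to arrange the second condition destroys the first. The same defect infects your $(3)\Rightarrow(1)$, which is moreover circular as written: the ``parallel $(1)\Rightarrow(4)$-type construction within $\inv_p(X)$'' needs a quasi-inversion of $\inv_p(X)$ at $\infty$ to handle the points far from $\infty$, which is exactly the object you are trying to produce. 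Since every route back to $(1)$ in your scheme passes through this step, the cycle does not close.

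The paper closes the loop by proving $(4)\Rightarrow(2)\Rightarrow(3)\Rightarrow(1)$, and the last implication is precisely the mechanism you are missing. Under $(3)$ one has uniform bi-Lipschitz homogeneity of \emph{both} $X$ and $\inv_p(X)$; the quasi-inversion is built as $h\circ g\circ f$, where $f,h$ are bi-Lipschitz self-maps of $X$ (hence automatically fix $\infty$) with $f(p)=q$ and $h(g(\infty))=p$, and $g$ is a bi-Lipschitz self-map of $\inv_p(X)$ (hence automatically fixes $p$) with $g(q)=\infty$. Because each factor fixes one of the two special points for free, the composition genuinely interchanges $p$ and $\infty$, hence restricts to a self-homeomorphism of $X_p$, and the estimate $d(h(g(f(x))),h(g(f(y))))\simeq d(x,y)/(d(x,p)\,d(y,p))$ follows from \eqref{E:inverted}. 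You should either adopt this route or exhibit a concrete construction of a self-map of $\sph_p(X)$ swapping $p$ and $\infty$; as it stands, $(4)\Rightarrow(1)$ and $(3)\Rightarrow(1)$ are unproven.
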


\begin{proof}
We prove $(3)\Rightarrow(1)\Rightarrow(4)\Rightarrow(2)\Rightarrow(3)$. 

Suppose first that $\inv_p(X)$ is uniformly bi-Lipschitz homogeneous, and fix some $q\in X_p$. Let $f:X\to X$ denote a bi-Lipschitz map such that $f(p)=q$. Let $g:\inv_p(X)\to\inv_p(X)$ denote a bi-Lipschitz map such that $g(q)=\infty$. Lastly, let $h:X\to X$ denote a bi-Lipchitz map such that $h(g(\infty))=p$. We claim that the composition $h\circ g\circ f:X_p\to X_p$ is a quasi-inversion. Indeed, we first note that $h(g(f(p)))=\infty$ and $h(g(f(\infty)))=p$. Furthermore, for any $x,y\in X$, we have
\begin{align*}
d(h(g(f(x))),h(g(f(y))))&\eqx d(g(f(x)),g(f(y)))\\
&\eqx\frac{d(f(x),f(y))\,d(g(f(x)),p)\,d(g(f(y)),p)}{d(f(x),p)\,d(f(y),p)}\\
&\eqx\frac{d(x,y)\,d(g(f(x)),p)\,d((f(y)),p)}{d(f(x),p)\,d(f(y),p)}
\end{align*}
Here we use \rf{E:inverted}. We then note that 
\begin{align*}
\frac{1}{d(g(f(x)),p)}&= d_p(g(f(x)),\infty)= d_p(g(f(x)),g(q))\eqx d_p(f(x),q)\\
&=d_p(f(x),f(p))\eqx\frac{d(x,p)}{d(f(x),p)\,d(q,p)}.
\end{align*}
It follows that 
\begin{align*}
d(h(g(f(x))),h(g(f(y))))&\eqx\frac{d(x,y)\,d(f(x),p)\,d(f(y),p)d\,(q,p)^2}{d(x,p)\,d(y,p)\,d(f(x),p)\,d(f(y),p)}\\
&\eqx\frac{d(x,y)}{d(x,p)\,d(y,p)}.
\end{align*}
Here we note that the final comparability depends on the quantity $d(q,p)$. We also note that our claim regarding $h\circ g\circ f$ has been verified. Therefore, we conclude that $(3)\Rightarrow(1)$. 

\medskip
Now we suppose that $X$ admits an $M$-quasi-inversion $\sigma_p$. We claim there exists $C\geq 1$ such that any point $q\in\sph_p(X)$ can be mapped to $p$ by an $C$-bi-Lipschitz self-homeomorphism of $\sph_p(X)$. To verify this claim, we first assume that $q\in B(p;1)\subset X$. Based on the assumption that $X$ is $L$-bi-Lipschitz homogeneous, for some $L\geq1$, let $f:X\to X$ denote an $L$-bi-Lipschitz map such that $f(q)=p$. By \rf{L:BL_on_sphere}, we conclude that $f:\sph_p(X)\to\sph_p(X)$ is $K_1$-bi-Lipschitz, where $K_1=K_1(L,d(f(p),p))$. Since $d(f(p),p)=d(f(p),f(q))\leq L$, we have $K_1=K(L)$. Next, we assume that $q\not\in B(p;1)$. Then $q':=\sigma_p(q)$ satisfies $d(p,q')\leq M$. Letting $g:X\to X$ denote an $L$-bi-Lipschitz map such that $g(q')=p$, \rf{L:BL_on_sphere} and \rf{L:BL_inversion_2pt} allow us to conclude that $g\circ \sigma_p:\sph_p(X)\to\sph_p(X)$ is $K_2$-bi-Lipschitz, with $K_2=K_2(L,M)$. It follows that $\sph_p(X)$ is $C^2$-bi-Lipschitz homogeneous, with $C=\max\{K_1,K_2\}$. Thus we prove $(1)\Rightarrow(4)$. 

\medskip
Next, suppose $\sph_p(X)$ is uniformly bi-Lipschitz homogeneous. Therefore, there exists a bi-Lipschitz homeomorphism $f:\sph_p(X)\to\sph_p(X)$ such that $f(\infty)=p$. By \cite[Lemma 3.2]{BHX-inversions}, we conclude that $\inv_\infty(\sph_p(X))$ is bi-Lipschitz homeomorphic to $\inv_p(\sph_p(X))$. By \cite[Proposition 3.4]{BHX-inversions}, we conclude that $\inv_\infty(\sph_p(X))$ is bi-Lipschitz homeomorphic to $X$, and, by \rf{L:iterations}$(1)$, we conclude that $\inv_p(\sph_p(X))$ is bi-Lipschitz homoemorphic to $\inv_p(X)$. Thus $X$ is bi-Lipschitz homeomorphic to $\inv_p(X)$, and we establish $(4)\Rightarrow(2)$. 

\medskip
Lastly, we note that $(2)\Rightarrow(3)$ is almost immediate. Indeed, if $X$ is $L$-bi-Lipschitz homogeneous and $M$-bi-Lipschitz equivalent to $\inv_p(X)$, for some numbers $L,M\geq1$, then $\inv_p(X)$ is $LM^2$-bi-Lipschitz homogeneous. Thus $(2)\Rightarrow(3)$. 
\end{proof}

\begin{remark}
We note that \rf{P:invertible_char} clarifies the relationship between the assumptions of uniform bi-Lipschitz homogeneity and quasi-invertibility with the terminology \textit{inversion invariant bi-Lipschitz homongeneity} as used, for example, in \cite{Freeman-iiblh}.
\end{remark}


\subsection{Additional consequences of bi-Lipschitz homogeneity}

Given a proper, uniformly bi-Lipschitz homogeneous metric space $X$ and a compact subset $K\subset X$, the next lemma demonstrates that one can map a point $x\in K$ to a point $y\in X$ using a bi-Lipschitz map that almost fixes points of $K$, provided that $x$ and $y$ are near enough to each other. 

\begin{lemma}\label{L:q_translate}
Suppose $X$ is a proper and $L$-bi-Lipschitz homogeneous metric space. For every $x\in X$, $\ep>0$, and compact set $K\subset X$ containing $x$, there exists $\delta>0$ such that, for any $y\in B(x;\delta)$ there exists an $L^2$-bi-Lipschitz homeomorphism $h:X\to X$ such that $h(x)=y$ and $\sup_{z\in K}d(h(z),z)<\ep$.
\end{lemma}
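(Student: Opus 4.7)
The plan is to argue by contradiction, using an Arzel\`a--Ascoli extraction that is available because $X$ is proper. Suppose the lemma fails: there exist $x_0 \in X$, a compact $K_0 \ni x_0$, and $\varepsilon_0 > 0$ such that for every $n \in \msf{N}$ one can find $y_n \in B(x_0;1/n)$ for which no $L^2$-bi-Lipschitz self-homeomorphism $h$ of $X$ with $h(x_0)=y_n$ satisfies $\sup_{z\in K_0} d(h(z),z) < \varepsilon_0$. By uniform $L$-bi-Lipschitz homogeneity, for each $n$ pick an $L$-bi-Lipschitz $f_n : X \to X$ with $f_n(x_0) = y_n$.

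The first step is to extract a limit of the sequence $\{f_n\}$. Both $\{f_n\}$ and $\{f_n^{-1}\}$ are equicontinuous (by the $L$-Lipschitz bound) and pointwise bounded: for any $z\in X$, $d(f_n(z),x_0) \leq L\,d(z,x_0) + d(y_n,x_0)$, while $d(f_n^{-1}(z),x_0) \leq L\,d(z,y_n) + d(y_n,x_0)$, since $f_n^{-1}(y_n) = x_0$ and $y_n\to x_0$. Properness of $X$ upgrades pointwise boundedness to pointwise relative compactness, and a diagonal extraction on a countable dense subset followed by an equicontinuity argument yields a subsequence (still indexed by $n$) with $f_n \to f$ and $f_n^{-1} \to g$ uniformly on compact sets, where $f,g$ are $L$-Lipschitz. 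Passing to the limit in $f_n\circ f_n^{-1} = \id = f_n^{-1}\circ f_n$ using uniform convergence on compact sets gives $f\circ g = g\circ f = \id$, so $f$ is an $L$-bi-Lipschitz self-homeomorphism of $X$ with $f(x_0) = \lim_n y_n = x_0$.

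The second step is the key composition trick. Set $h_n := f_n \circ f^{-1}$. Then $h_n$ is $L^2$-bi-Lipschitz and $h_n(x_0) = f_n(x_0) = y_n$. Since $f^{-1}(K_0)$ is compact and $f_n \to f$ uniformly on compact sets, $h_n|_{K_0} = f_n\circ f^{-1}|_{K_0} \to f\circ f^{-1}|_{K_0} = \id|_{K_0}$ uniformly, so $\sup_{z\in K_0} d(h_n(z),z) < \varepsilon_0$ for all sufficiently large $n$. This contradicts the choice of $y_n$, proving the lemma.

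The main obstacle is setting up the Arzel\`a--Ascoli extraction so that the limit $f$ is genuinely $L$-bi-Lipschitz and not merely $L$-Lipschitz; this requires extracting convergent subsequences simultaneously from $\{f_n\}$ and $\{f_n^{-1}\}$ and verifying that the two limits are mutual inverses. Once $f$ is available, the composition $h_n := f_n\circ f^{-1}$ is the natural candidate, and the jump from the homogeneity constant $L$ to $L^2$ is precisely the unavoidable cost of this composition.
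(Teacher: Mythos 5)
Your proof is correct and follows essentially the same route as the paper's: an Arzel\`a--Ascoli extraction (using properness) of a locally uniform limit $f$ of $L$-bi-Lipschitz maps $f_n$ sending $x$ to points converging to $x$, followed by the composition $f_n\circ f^{-1}$, which is $L^2$-bi-Lipschitz and converges uniformly to the identity on the compact set. The only differences are cosmetic --- you wrap the argument in a contradiction rather than stating it directly, and you track the inverses $f_n^{-1}$ explicitly to certify that the limit $f$ is a genuine self-homeomorphism, a point the paper passes over more quickly.
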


\begin{proof}
Let $x\in X$, $\ep>0$, and a compact set $K\subset X$ contaning $x$ be fixed. For a given $n\in\msf{N}$, set $K_n:=\overline{B(x;n)}$, the closure of the ball of radius $n$. Let $(x_m)_{m=1}^{+\infty}$ denote any sequence of points in $X$ such that $d(x_m,x)\to0$. For each $m$, write $x_{1,m}:=x_m$. Suppose there exists a sequence of $L$-bi-Lipschitz homeomorphisms $f_{1,m}:X\to X$ such that $f_{1,m}(x)=x_{1,m}$. Since $X$ is proper, we can assume (up to a subsequence) that $f_{1,m}$ uniformly converges on $K_1$ to an $L$-bi-Lipschitz embedding $f_1:K_1\to X$ such that $f_1(x)=x$. Inductively define sequences of points $(x_{n,m})_{m=1}^{+\infty}$ such that, for $n\geq2$, each $(x_{n,m})_{m=1}^{+\infty}$ is a subsequence of $(x_{n-1,m})_{m=1}^{+\infty}$. Furthermore, define sequences $(f_{n,m})_{m=1}^{+\infty}$ of $L$-bi-Lipschitz self-homeomorphisms of X such that, for $n\geq2$, each $(f_{n,m})_{m=1}^{+\infty}$ is a subsequence of $(f_{n-1,m})_{m=1}^{+\infty}$ such that $f_{n,m}(x)=x_{n,m}$. We can also assume that $(f_{n,m})_{m=1}^{+\infty}$ converges uniformly on $K_n$ to an $L$-bi-Lipschitz embedding $f_n:K_n\to X$ such that $f_n(x)=x$. Note also that $f_n=f_{n-1}$ when restricted to $K_{n-1}$. The sequence $(f_n)_{n=1}^{+\infty}$ locally uniformly converges to an $L$-bi-Lipschitz homeomorphism $f:X\to X$ such that $f(x)=x$. Fix $N\in\msf N$ such that $K\subset K_N$. For $n\geq N$, define $g_n:=f_{n,n}\circ f^{-1}$. Then $g_n(x)=x_{n,n}$, and $g_n$ uniformly converges to the identity map on $K$. 

The above paragraph allows us to conclude that, up to a subsequence, for any sequence of points $x_n\to x$, there exists $N\in\msf N$ such that for any $n\geq N$, there exists an $L^2$-bi-Lipschitz map $g_n:X\to X$ such that $g_n(x)=x_n$ and $\max_{z\in K}d(g_n(z),z)<\ep$. This implies the existence of $\delta>0$ such that, for any $y\in B(x;\delta)$, there exists an $L^2$-bi-Lipschitz map $h:X\to X$ such that $h(x)=y$ and $\max_{z\in K}d(h(z),z)<\ep$.
\end{proof}

Regarding the next lemma, we recall that a point $x\in X$ is called a {\em strong cut point} if $X\setminus \{x\}$ has exactly two connected components.
\begin{lemma}\label{L:cut}
Let  $X$ be a proper and $L$-bi-Lipschitz homogeneous metric space. Assume that $X$ is path connected and locally path connected.
Then any cut point of $X$ is a strong cut point.
\end{lemma}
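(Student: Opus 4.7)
The plan is to argue by contradiction: suppose $x$ is a cut point of $X$ such that $X\setminus\{x\}$ has at least three distinct components, and use \rf{L:q_translate} to produce a bi-Lipschitz self-homeomorphism of $X$ whose existence is inconsistent with this component count. I would record two preliminary observations used throughout: because $X$ is locally path connected, each component of the open set $X\setminus\{x\}$ is open and path connected, and because $X$ is connected, every such component $C$ must accumulate at $x$ (otherwise $C$ would be clopen in $X$, contradicting its connectedness).

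Label three of the components $C_1,C_2,C_3$. Fix arbitrary points $y_2\in C_2$ and $y_3\in C_3$, and choose disjoint path-connected open neighborhoods $U_2\ni y_2$ and $U_3\ni y_3$ whose closures miss $x$. I would then invoke \rf{L:q_translate} at the point $x$ with compact set $K=\{x,y_2,y_3\}$ and with $\veps>0$ small enough that $B(y_i;\veps)\subseteq U_i$ for $i\in\{2,3\}$; this yields $\delta>0$ such that for every $y\in B(x;\delta)$ there is an $L^2$-bi-Lipschitz self-homeomorphism $h:X\to X$ with $h(x)=y$ and $d(h(y_i),y_i)<\veps$. Shrinking $\delta$ if necessary, I also arrange that $B(x;\delta)\cap(\overline{U_2}\cup\overline{U_3})=\emptyset$. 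Since $C_1$ accumulates at $x$, pick $y_1\in C_1\cap B(x;\delta)$ and let $h$ be the corresponding map.

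The contradiction now comes from two incompatible computations of components in $X\setminus\{y_1\}$. For each $i\in\{2,3\}$, truncating a path in $X$ from $y_i$ to $x$ at its first hit of $x$ produces a path whose interior lies in $C_i$; concatenating these gives a path from $y_2$ to $y_3$ whose trace lies in $C_2\cup C_3\cup\{x\}$, and in particular avoids $y_1\in C_1$. Hence $y_2$ and $y_3$ lie in a common component $D$ of $X\setminus\{y_1\}$. On the other hand, $h(y_i)\in B(y_i;\veps)\subseteq U_i$, and $U_i$ is a path-connected set disjoint from $y_1$, so $y_i$ and $h(y_i)$ lie in the same component of $X\setminus\{y_1\}$, namely $D$. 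But $h$ is a homeomorphism with $h(x)=y_1$, so it sends the components of $X\setminus\{x\}$ bijectively to those of $X\setminus\{y_1\}$; thus $h(C_2)$ and $h(C_3)$ are distinct components containing $h(y_2)$ and $h(y_3)$ respectively, contradicting $h(y_2),h(y_3)\in D$.

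The main obstacle is the order of quantifiers: the perturbation $h$ must translate $x$ to a point of $C_1$ while simultaneously moving $y_2$ and $y_3$ by less than the amount needed to preserve their component class in $X\setminus\{y_1\}$. This is exactly what \rf{L:q_translate} provides, since it allows me to prescribe both the compact set $K$ and the tolerance $\veps$ before choosing the target $y_1$; the only care required is to fix $U_2$, $U_3$, and $\veps$ first, and then select $y_1\in C_1$ sufficiently close to $x$.
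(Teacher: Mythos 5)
Your proof is correct and follows essentially the same route as the paper's: both reduce to three components, fix path-connected neighborhoods $U_2,U_3$ of marked points in two of them, and invoke Lemma~\ref{L:q_translate} with $K=\{x,y_2,y_3\}$ to move $x$ into the third component while displacing the marked points by less than $\veps$. The only difference is cosmetic --- the paper derives the contradiction from $x\in h(\gamma_2)\cap h(\gamma_3)=h(\{x\})$ using explicit curves, while you count components of $X\setminus\{y_1\}$ under the bijection induced by the homeomorphism $h$; both endgames are sound.
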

\begin{proof}
 Suppose $x$ is a cut point of $X$. Suppose, by way of contradiction, that $x$ is not a strong cut point. In other words, suppose there exist three points $z_1$, $z_2$, and $z_3$ in three different connected components $X_1$, $X_2$, and  $X_3$ of $X\setminus\{x\}$, respectively. Since $X$ is path connected, there exist curves $\gamma_i$ joining $z_i$ to $x$, for $i=1,2,3$, and we may further assume that $\gamma_i':=\gamma_i\setminus\{x\} $ is connected. Note that $\gamma_1'$, $\gamma_2'$ and $\gamma_3'$ are contained in different components of $X\setminus\{x\}$, and are therefore pairwise disjoint. Let $U_2$ and $U_3$ denote path connected  neighborhoods of $z_2$ and $z_3$, respectively, that do not contain $x$. Hence, we have  $U_2\subset X_2$ and $U_3\subset X_3$. 

Choose $\varepsilon>0$ such that $B(z_i; \varepsilon)\subset U_i$, for $i=2,3$. 
Apply  \rf{L:q_translate} with $K:=\{x,z_2, z_3\}$. Thus, there exists $\delta>0$ such that, for any $y\in \gamma_1'\cap B(x;\delta)$, there exists a $L^2$-bi-Lipschitz homeomorphism $h:X\to X$ such that $h(x)=y$ and $h(z_i)\in B(z_i;\varepsilon)$, for $i=2,3$. 

By the construction of $U_2$ and $U_3$, there exist curves $\eta_i\subset  U_i$ joining $z_i$ to $h(z_i)$, for $i=2,3$, Therefore, the connected set  $\mu_i:=\eta_i \cup h(\gamma_i)\cup \gamma_1'$  contains both $z_i$ and $z_1$. 
Notice that $\eta_2  \cup \eta_3 \cup \gamma_1'$ does not contain $x$. Therefore $x\in h(\gamma_2) \cap h(\gamma_3)=h(\gamma_2\cap\gamma_3)=h(x)$. However, $h(x)\not=x$. The contradiction ends the proof.
\end{proof}

Our next step is to prove that, given two points $x,y\in X$ along with a compact neighborhood  $K$ containing both $x$ and $y$, one can find a map that is bi-Lipschitz on $K$, fixes $x$, and sends $y$ to any point within a small enough neighborhood of $y$. 

\begin{lemma}\label{L:fixed_point_d}
Suppose $X$ is unbounded, proper, $L$-bi-Lipschitz homogeneous, and $M$-quasi-invertible. Let $x\in X$ and $0<R<\infty$. There exists $C=C(L,M,R)$ such that, for any $y\in B(x;R)\setminus\{x\}$, there exists $\delta>0$ such that, for any point $u\in B(y;\delta)$, there exists a homeomorphism $f:\sph_x(X)\to\sph_x(X)$ such that, for any $a,b\in B(x;R)$, we have $d(f(a),f(b))\eqx_C d(a,b)$. Moreover, $f(x)=x$ and $f(y)=u$. 
\end{lemma}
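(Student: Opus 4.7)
The plan is to construct $f$ by conjugating a bi-Lipschitz self-map of $X$ with a quasi-inversion at $x$. The rationale is that the quasi-inversion $\sigma_x$, extended to $\sph_x(X)$, swaps $x$ and $\infty$; conjugation by $\sigma_x$ therefore turns the constraint ``$f(x)=x$'' into the constraint that the inner map fixes $\infty$, which is automatic for any bi-Lipschitz self-map of $X$. The inner map is then furnished by \rf{L:q_translate}.

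In detail, let $\sigma_x$ be an $M$-quasi-inversion at $x$, extended to $\sph_x(X)$ by $\sigma_x(x) = \infty$ and $\sigma_x(\infty) = x$. Set $y' := \sigma_x(y) \in X_x$. Since $y \in B(x;R) \setminus \{x\}$, the standard quasi-inversion estimates give $d(y',x) \simeq_M 1/d(y,x)$, so in particular $d(y',x) \geq 1/(MR)$. Choose a compact set $K' := \overline{B(x;N)} \subset X$ containing $\{x, y'\}$ and a parameter $\ep' > 0$, with $N$ large and $\ep'$ small (both to be quantified below in terms of $L, M, R$). Apply \rf{L:q_translate} at $y'$ with compact set $K'$ and parameter $\ep'$, obtaining $\delta' > 0$ such that for every $u' \in B(y';\delta')$ there is an $L^2$-bi-Lipschitz homeomorphism $g: X \to X$ with $g(y') = u'$ and $\sup_{z \in K'} d(g(z),z) < \ep'$. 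By continuity of $\sigma_x$ at $y$, choose $\delta > 0$ with $\sigma_x(B(y;\delta)) \subset B(y';\delta')$. Given $u \in B(y;\delta)$, set $u' := \sigma_x(u)$, take the associated map $g$, extend it by $g(\infty) = \infty$, and define
\[ f := \sigma_x^{-1} \circ g \circ \sigma_x : \sph_x(X) \to \sph_x(X). \]
Then $f(x) = \sigma_x^{-1}(g(\infty)) = x$, $f(y) = \sigma_x^{-1}(u') = u$, and $f$ is a self-homeomorphism of $\sph_x(X)$.

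It remains to verify that $f$ is $C$-bi-Lipschitz on $B(x;R)$ with respect to $d$, for some $C = C(L,M,R)$. For $a, b \in B(x;R) \setminus \{x\}$, using that $\sigma_x$ and $\sigma_x^{-1}$ are quasi-inversions at $x$ with constants depending on $M$, and that $g$ is $L^2$-bi-Lipschitz on $X$, a direct calculation yields
\[ d(f(a),f(b)) \simeq \frac{d(a,b)}{d(a,x)\, d(b,x)\, d(g(\sigma_x(a)),x)\, d(g(\sigma_x(b)),x)} \]
up to multiplicative constants depending only on $L, M$. Combined with $d(\sigma_x(z),x) \simeq_M 1/d(z,x)$, the problem reduces to the estimate $d(g(w),x) \simeq d(w,x)$ uniformly for $w \in \sigma_x(B(x;R) \setminus \{x\}) \subset X \setminus B(x; 1/(MR))$. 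For $w \in K' \setminus B(x;1/(MR))$ this follows from $d(g(w),w) < \ep'$ provided $\ep' < 1/(2MR)$; for $w \in X \setminus K'$, i.e.\ $d(w,x) > N$, it follows from the $L^2$-bi-Lipschitz property of $g$ together with $d(g(x),x) < \ep'$, provided $N \geq 2L^2 \ep'$. Choosing $\ep'$ small and $N$ large in terms of $L, M, R$ closes the argument. The edge cases $a = x$ or $b = x$ are handled by computing $d(x,f(b)) \simeq_M 1/d(g(\sigma_x(b)),x) \simeq d(x,b)$ via the same chain.

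The main technical obstacle is the balancing of $\ep'$ against $N$: $\ep'$ must be small enough (in terms of $MR$) that $g$ is essentially the identity on the compact annulus $\overline{B(x;N)} \setminus B(x; 1/(2MR))$, while $N$ must be large enough that the bi-Lipschitz asymptotics of $g$ control its behavior outside $K'$. The compact set $K'$ and the threshold $\delta$ are permitted to depend on the position of $y$ (through $d(y,x)$, which controls where $y'$ lies), but the final bi-Lipschitz constant $C$ depends only on $L, M, R$.
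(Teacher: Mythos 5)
Your construction is exactly the one in the paper: apply Lemma~\ref{L:q_translate} near $v=\sigma_x(y)$ to get an $L^2$-bi-Lipschitz map $g$ that nearly fixes a large compact set containing $x$, and conjugate by the quasi-inversion to form $f=\sigma_x^{-1}\circ g\circ\sigma_x$, which fixes $x$ and sends $y$ to $u$. The only difference is bookkeeping: the paper verifies the bi-Lipschitz estimate on $B(x;R)$ by first showing $f$ is uniformly bi-Lipschitz for the sphericalized distance $\hat d_x$ with $\hat d_x(f(\infty),\infty)$ small (via Lemmas~\ref{L:BL_on_sphere} and~\ref{L:BL_inversion_2pt}), whereas you compute directly with $d$ and the quasi-inversion identities, reducing to $d(g(w),x)\simeq d(w,x)$ on $\sigma_x(B(x;R)\setminus\{x\})$ --- both verifications are correct and equivalent in substance.
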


\begin{proof}
Fix distinct points $x,y\in X$ and $R>0$. We claim there exist constants $C=C(L,M)<+\infty$ and $\delta>0$ such that, for any $u\in B(y;\delta)$, there exists a $C$-bi-Lipschitz homeomorphism of $f:\sph_x(X)\to\sph_x(X)$ such that $f(x)=x$, $f(y)=u$, and $\hat{d}_x(f(\infty),\infty)<(2C(1+R))^{-1}$. 

To verify this claim, choose $\varepsilon\in(0,1)$ (whose value is to be determined below) and $N\in\msf N$ such that $x\in K:=\overline{B(v;N)}$, where $v:=\sigma_x(y)$. By \rf{L:BL_inversion_2pt}, the map $\sigma_x^{-1}:\sph_x(X)\to\sph_x(X)$ is $C_1$-bi-Lipschitz, with $C_1=C_1(M)$. Therefore, for any $a,b\in\hat{X}$, if $\hat{d}_x(a,\sigma_x(b))<\varepsilon/C_1$, then $\hat{d}_x(\sigma_x^{-1}(a),b)<\varepsilon$. 

By \rf{L:q_translate}, there exists $\delta_1>0$ such that, for any $u\in X$ satisfying $\sigma_x(u)\in B(v;\delta_1)$, there exists an $L^2$-bi-Lipschitz homeomorphism $h_u:X\to X$ such that $h_u(v)=\sigma_x(u)$ and 
\[\max_{a\in K}d(h_u(a),a)<\varepsilon/C_1.\]
In particular, $d(h_u(x),x)<1$. By \rf{L:BL_on_sphere}, we conclude that $h_u:\sph_x(X)\to\sph_x(X)$ is $C_2$-bi-Lipschitz, with $C_2=C_2(L)$. 

For each $u$ such that $\sigma_x(u)\in B(v;\delta_1)$, define $g_u:=\sigma_x^{-1}\circ h_u\circ \sigma_x$. Choose $\delta_2>0$ small enough to ensure that $\sigma_x(B(y;\delta_2))\subset B(v;\delta_1)$. By the two preceding paragraphs, $\{g_u\,|\,u\in B(y;\delta_2)\}$ is a collection of uniformly $C_3$-bi-Lipschitz self-homeomorphisms of $\sph_x(X)$, where $C_3=C_3(L, M)$. Here we homeomorphically extend $h_u$ such that $h_u(\infty)=\infty$. Thus we have $g_u(x)=x$ and $g_u(y)=u$, and we note that $\hat{d}_x(h_u(x),x)\leq d(h_u(x),x)<\varepsilon/C_1$. Therefore, $\hat{d}_x(g_u(\infty),\infty)<\varepsilon$, and, if we choose $\varepsilon=(2C_3(1+R))^{-1}<1$, then our claim is verified. 

To conclude the proof of the lemma, choose $x\in X$ and $R>0$. Then choose $y\in B(x;R)\setminus\{x\}$. By the above claim, there exist constants $C=C(L,M)$ and $\delta>0$ such that, for any $u\in B(y;\delta)$, there exists a $C$-bi-Lipschitz homeomorphism $g_u:\sph_x(X)\to \sph_x(X)$ such that $g_u(x)=x$, $g_u(y)=u$, and $\hat{d}_x(g_u(\infty),\infty)<(2C(1+R))^{-1}$. Here we may assume that $\delta$ is small enough to ensure that $B(y;\delta)\subset B(x;R)$. For any $a\in B(x;R)$, it follows from the triangle inequality and the properties of $g_u$ that
\begin{equation}\label{E:distance}
d(g_u(a),x)=\frac{1}{\hat{d}_x(g_u(a),\infty)}-1\leq2C(1+R)-1\leq 2C(1+R).
\end{equation}
Set $C_4:=2C(1+R)$. Via \rf{E:distance}, for any $a,b\in B(x;R)$, we have
\begin{align*}
d(g_u(a),g_u(b))&\leq 4\hat{d}_x(g_u(a),g_u(b))(1+d(g_u(a),x))(1+d(g_u(b),x))\\
&\leq 4C\hat{d}_x(a,b)(1+C_4)^2\leq 4C(1+C_4)^2d(a,b).
\end{align*}
On the other hand, we have
\begin{align*}
d(g_u(a),g_u(b))&\geq\hat{d}_x(g_u(a),g_u(b))\geq\frac{\hat{d}_x(a,b)}{C}\\
&\geq\frac{d(a,b)}{4C(1+d(a,x))(1+d(b,x))}\geq \frac{d(a,b)}{4C(1+R)^2}\geq \frac{d(a,b)}{4C(1+C_4)^2}.
\end{align*}
Defining $C_5:=4C(1+C_4)^2$, for any $a,b\in B(x;R)$ we have $d(g_u(a),g_u(b))\eqx_{C_5}d(a,b)$.
\end{proof}

\subsection{Proof of Theorem~\ref{T:main:connected}}

In this section we prepare for and present the proof of Theorem~\ref{T:main:connected}. We begin by establishing a few technical results. The first of these lemmas is of a general nature and does not rely on the assumption of bi-Lipschitz homogeneity.

\begin{lemma}\label{L:porous}
Let $X$ denote a proper metric space. Fix constants $C,R<+\infty$ and a point $x\in X$. If each open ball in $X$ has infinite Hausdorff $1$-measure, then there exists $\delta>0$ such that, for any rectifiable curve $\gamma\subset X$ such that $\textrm{Length}(\gamma)\leq C$, there exists $y\in B(x;R)$ such that $B(y;\delta)\cap \gamma=\emptyset$.  
\end{lemma}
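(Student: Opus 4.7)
I would argue by contradiction. Suppose no such $\delta$ exists. Then for each $n \in \mfN$ there is a rectifiable curve $\gamma_n \subset X$ with $\text{Length}(\gamma_n) \leq C$ whose $(1/n)$-neighborhood covers $B(x;R)$; equivalently, for every $y \in B(x;R)$ one has $\dist(y,\gamma_n) < 1/n$. Since $\gamma_n$ is the continuous image of a compact interval, it is a continuum (compact and connected). It meets $B(x;R)$ and has diameter at most $C$, so it lies in the compact set $\overline{B(x; R+C)}$, which is compact because $X$ is proper.

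Next, I would apply Blaschke's selection theorem in the (compact) hyperspace of closed subsets of $\overline{B(x;R+C)}$ to extract a subsequence (still written $\gamma_n$) that converges in the Hausdorff metric to a compact continuum $E \subset \overline{B(x;R+C)}$. The density hypothesis passes to the limit: for fixed $y \in B(x;R)$,
\[
\dist(y,E) \;\leq\; \dist(y,\gamma_n) + d_H(\gamma_n,E) \;<\; \tfrac{1}{n} + d_H(\gamma_n,E) \;\longrightarrow\; 0,
\]
so $y \in E$ by closedness, which gives $\overline{B(x;R)} \subset E$.

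The crucial step is then Golab's lower-semicontinuity theorem for the Hausdorff $1$-measure along Hausdorff-convergent sequences of continua. Combined with the standard inequality $\mathcal{H}^1(\gamma_n) \leq \text{Length}(\gamma_n) \leq C$ (obtained by reparametrizing $\gamma_n$ by arc length to produce a $1$-Lipschitz surjection from a compact interval of length at most $C$), this yields
\[
\mathcal{H}^1(E) \;\leq\; \liminf_{n\to\infty} \mathcal{H}^1(\gamma_n) \;\leq\; C.
\]
On the other hand, the hypothesis forces $\mathcal{H}^1(E) \geq \mathcal{H}^1(B(x;R)) = +\infty$, a contradiction. The main obstacle I anticipate is justifying Golab's theorem in an arbitrary proper metric space rather than in Euclidean space; this metric-space version is classical but warrants a careful reference (for instance, the version in Ambrosio--Tilli's monograph on analysis in metric spaces). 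Once that is in hand, the remaining steps are routine compactness and approximation arguments.
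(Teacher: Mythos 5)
Your proof is correct, and its skeleton---argue by contradiction, extract a limit of the curves $\gamma_n$, show the limit set contains $B(x;R)$, and bound its $\mathcal{H}^1$-measure by $C$---is the same as the paper's. The difference is in how the limit object and the measure bound are produced. You take Hausdorff limits of the sets themselves via Blaschke selection and then invoke Go\l{}ab's lower semicontinuity theorem for $\mathcal{H}^1$ along Hausdorff-convergent continua; as you anticipate, this requires a metric-space version of Go\l{}ab, and the Ambrosio--Tilli statement (for closed connected subsets of a general metric space) does cover your situation, so the step goes through. The paper instead applies Arzel\`a--Ascoli to the arclength parametrizations: each $\gamma_n$ is reparametrized as a $1$-Lipschitz map $\alpha_n:[0,C]\to X$, a subsequence converges uniformly to a $1$-Lipschitz map $\alpha_\infty$, and the limit set $\gamma_\infty=\alpha_\infty([0,C])$ automatically satisfies $\mathcal{H}^1(\gamma_\infty)\leq C$, since a $1$-Lipschitz image of an interval of length $C$ has $\mathcal{H}^1$-measure at most $C$. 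That route sidesteps Go\l{}ab entirely and keeps the argument self-contained at the cost of a little parametrization bookkeeping; yours trades that bookkeeping for a heavier (but classical) semicontinuity theorem. Both are valid.
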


\begin{proof}
By way of contradiction, suppose that there exists a sequence of positive numbers $\delta_n\to0$ and a sequence of rectifiable curves $\gamma_n\subset X$ such that, for every $q\in B(x;R)$, we have $B(q;\delta_n)\cap \gamma_n\not=\emptyset$. Furthermore, for every $n\in\msf{N}$, we have $\textrm{Length}(\gamma_n)=C_n\leq C$. For each $n\in\msf{N}$, we write $\alpha_n:[0,C]\to\gamma_n$ to denote a parametrization such that $\alpha_n|_{[0,C_n]}$ is an arclength parameterization of $\gamma_n$ and $\alpha_n$ is constant on $[C_n,C]$. Thus each $\alpha_n$ is $1$-Lipschitz. Since $X$ is proper and, for every $n\in\msf{N}$, we have $\gamma_n\cap B(x;R)\not=\emptyset$, by Arzela-Ascoli we can assume that (up to a subsequence) the maps $\alpha_n$ are uniformly convergent to a $1$-Lipschitz map $\alpha_\infty:[0,C]\to X$. Write $\gamma_\infty=\alpha_\infty([0,C])$. Thus we have $d_H(\gamma_n,\gamma_\infty)\to0$, where $d_H$ denotes Hausdorff distance. Let $z\in B(x;R)$. For each $n\in\msf{N}$, we have $B(z;\delta_n)\cap \gamma_n\not=\emptyset$. Since $\delta_n\to0$, it follows that $z\in \gamma_\infty$. Therefore, $B(x;R)\subset \gamma_\infty$. Since $\alpha_\infty:[0,C]\to X$ is $1$-Lipschitz, we conclude that $\mathcal{H}^1(B(x;R))\leq \mathcal{H}^1(\gamma_\infty)\leq C<+\infty$. This contradiction implies the lemma.
\end{proof}

\begin{proposition}\label{P:aqcx}
Suppose $X$ is unbounded, proper, $L$-bi-Lipschitz homogeneous, and $M$-quasi-invertible. If $X$ contains a non-degenerate rectifiable curve, then $X$ is either bi-Lipschitz homeomorphic to $\msf R$ or $X$ is annularly quasiconvex.
\end{proposition}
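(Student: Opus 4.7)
The plan is to reduce the whole statement to a single driver, namely Lemma~\ref{L:porous}, by first verifying that its hypothesis holds everywhere and at every scale in $X$. By uniform $L$-bi-Lipschitz homogeneity, the given non-degenerate rectifiable curve passes through every point of $X$ up to a bounded bi-Lipschitz distortion; by Lemma~\ref{L:no_isolated} the space $X$ is uniformly perfect, and so Lemma~\ref{L:quasi_dilation}(1) supplies $(K, r)$-quasi-dilations at every point for every $r > 0$. Composing a translate of the initial curve with a quasi-dilation gives rectifiable curves of arbitrary prescribed length through every prescribed point. Consequently every open ball of $X$ has infinite Hausdorff $1$-measure, which is precisely the hypothesis of Lemma~\ref{L:porous}. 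As a byproduct of the same construction, $X$ is quasi-convex: any two points are joined by a rectifiable curve of length at most $C(L,M)$ times their distance.

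Next I would split along a topological dichotomy. If $X$ contains a cut point, then Lemma~\ref{L:cut} promotes it to a strong cut point, and uniform bi-Lipschitz homogeneity upgrades this to: every point of $X$ is a strong cut point, so $X \setminus \{z\}$ has exactly two components for all $z \in X$. A line-fitting argument on the rectifiable curve (in the spirit of Laakso, as suggested by G.\ David and acknowledged above) combined with the quasi-dilations produced above provides bi-Lipschitz coordinates on $X$ identifying it with $\msf{R}$; rectifiability of the original curve is the ingredient that rules out a non-trivial snowflake factor appearing in the output.

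In the complementary case, where $X$ has no cut point, I would prove annular quasi-convexity by an explicit detour construction. Fix $z \in X$, $r > 0$, and $u, v \in A(z; r, 2r)$. By uniform bi-Lipschitz homogeneity at $z$ and a quasi-dilation at $z$, reduce to $r = 1$. Quasi-convexity supplies a rectifiable curve $\gamma$ from $u$ to $v$ of length at most $C d(u,v)$ with $C = C(L, M)$. If $\gamma$ enters a small ball $B(z; \delta)$, Lemma~\ref{L:porous} produces a point $y$ close to $z$ such that $B(y; \delta') \cap \gamma = \emptyset$. Lemma~\ref{L:fixed_point_d} then supplies a self-homeomorphism that fixes $z$, moves $y$ controllably, and is bi-Lipschitz on a fixed large ball; applying it to $\gamma$ yields a rectifiable curve of comparable length bypassing a definite ball around $z$. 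Iterating this detour at a geometric sequence of scales, and gluing successive detours (possible by the no-cut-point hypothesis), produces a curve contained in $A(z; 1/K, 2K)$ of length at most $K d(u,v)$, with $K = K(L, M)$, which is exactly annular quasi-convexity.

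The main obstacle, as I see it, is the gluing step in the last paragraph: one must convert the purely topological no-cut-point hypothesis into a uniformly controlled, scale-invariant way of patching together the local detours provided by Lemma~\ref{L:porous} and Lemma~\ref{L:fixed_point_d}. The cleanest route is to show contrapositively that the failure of this uniform detour construction at some annulus $A(z; r, 2r)$ forces the existence of a topological cut point at $z$, contradicting Case B and thereby closing the dichotomy. Executing this reduction carefully, with constants depending only on $L$ and $M$, is the technical heart of the proof.
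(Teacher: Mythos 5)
There is a genuine error at the foundation of your argument. From the existence of rectifiable curves of arbitrary length through every point you conclude that ``every open ball of $X$ has infinite Hausdorff $1$-measure,'' but this inference is false: a long curve need not sit inside a small ball, and rescaling a curve to fit inside $B(x;r)$ shrinks its length accordingly. The space $\msf{R}$ itself satisfies all the hypotheses and has balls of finite $\mathcal{H}^1$-measure; your claim would let you run the detour construction on $\msf{R}$ and ``prove'' it annularly quasi-convex, which is false. This is not a repairable detail but the exact dividing line of the statement: the paper's dichotomy is measure-theoretic, not topological. Its Part~3 splits on whether some ball has finite $\mathcal{H}^1$-measure --- in which case $\mathcal{H}^1(\sph_p(X))<\infty$, the Hausdorff and topological dimensions of $\sph_p(X)$ both equal $1$, and \cite[Theorem 1.3]{Freeman-invertible} gives $X$ bi-Lipschitz homeomorphic to $\msf{R}$ --- or every ball has infinite $\mathcal{H}^1$-measure, which is what legitimizes the appeal to Lemma~\ref{L:porous}. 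Your cut-point dichotomy belongs to Theorem~\ref{T:main:connected}, not to this proposition, and in the cut-point branch you never actually produce the bi-Lipschitz identification with $\msf{R}$ (the paper's route to that conclusion in Theorem~\ref{T:main:connected}(2) only yields a snowflake $(\msf{R},|\cdot|^{1/Q})$ and needs Ahlfors regularity, Ward's theorem, and a chord-arc condition; eliminating the snowflake exponent is precisely what the finite-measure argument accomplishes).

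The second gap is the one you flag yourself: the gluing of detours. In the paper this step does not require the no-cut-point hypothesis and is not an iteration over scales. One application of Lemma~\ref{L:porous} finds a point $q$ near $p$ with $B(q;\delta_2)\cap\gamma=\emptyset$; one application of Lemma~\ref{L:q_translate} (not Lemma~\ref{L:fixed_point_d}) gives an $L^2$-bi-Lipschitz map $h_q$ with $h_q(q)=p$ that moves every point of a fixed large ball by less than a prescribed $\varepsilon$, so $h_q(\gamma)$ avoids a definite ball about $p$ while its endpoints move only slightly; two short quasi-convex arcs reconnect the endpoints, and uniformity over all $x,y$ and all scales comes from compactness of $\overline{A}\times\overline{A}$ together with quasi-dilation invariance. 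Your proposed contrapositive --- that failure of the detour forces a cut point --- is left entirely unexecuted and is not needed once the construction is done this way. First the measure dichotomy must be restored; then the detour argument closes without any topological input.
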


\begin{proof}
The proof will proceed by a bootstrapping argument. In Part 1, we prove that $X$ is rectifiably connected. In Part 2, we prove that $X$ is quasiconvex. Finally, in Part 3, we prove the conclusion of the proposition.

\medskip
\textit{Part 1.} 
For every $x\in X$, let $E(x)$  be the set of all points in $X $ that can be joined to $x$ by a rectifiable curve in $X$.
Fix any $x\in X$. By assumption, there is a rectifiable curve in $X$ joining two distinct points; by uniform bi-Lipschitz homogeneity, we may assume that such a curve, denoted by  $\gamma$,   joins   $x$ with some other point $y\neq x$.
 Since $\gamma$ is compact, there exists $R<+\infty$ such that $\gamma\subset B(x;R)$. By \rf{L:fixed_point_d}, there exists $C_1=C_1(L,M,R)$ and $\delta_1>0$ such that, for any point $u\in B(y;\delta_1)$, there exists a $C_1$-bi-Lipschitz embedding $f:B(x;R)\to X$ such that $f(x)=x$ and $f(y)=u$. In particular, the curve $f(\gamma)$ is   rectifiable  and joins $f(x)=x$ to $f(y)=u$. Consequently, the set $E(x) \setminus \{x\} $ is open. By symmetry, the point $x$ is in the interior of $E(y)$. In other words, starting from $y$ we can get to an arbitrary point in some neighbourhood of $x$ by a rectifiable curve.  Concatenating  the curve $\gamma$ (and its reverse parametrization) with these curves, we conclude that $x$ is in the interior of $E(x)$. 
 That is,  there exists $\delta_2>0$ such that $B(x;\delta_2) \subset E(x)$.
Since $X$ is unbounded, \rf{L:quasi_dilation} implies the existence of $(C_3,R_n)$-quasi-dilations $f_n:X\to X$ fixing $x$. Here $R_n\to+\infty$ and $C_3=C_3(L,M)$. 
 Given any $z\in X$, there exists $n\in \msf{N}$ such that $\delta_2R_n/C_3>d(x,z)$, and thus  $z\in f_n(B(x;\delta_2))\subset  E(x) $. Since $z\in X$ was arbitrary, we conclude that $E(x)=X$, and $X$ is rectifiably connected.

\medskip
\textit{Part 2.} Since $X$ is rectifiably connected, it is connected. Since $X$ is connected and unbounded, there exist points $x,y\in X$ such that $d(x,y)=1$. Let $\gamma_y$ denote a rectifiable curve joining two such points $x$ and $y$. Choose $R_y>0$ large enough to ensure that $\gamma_y\subset B(x;R_y)$. By \rf{L:fixed_point_d}, there exists $C_1=C(L,M,y)<\infty$ and $\delta_y>0$ such that, for any $u\in B(y;\delta_y)$, there exists a $C_1$-bi-Lipschitz embedding $f:B(x;R_y)\to X$ such that $f(x)=x$ and $f(y)=u$. Therefore, each point in $B(y;\delta_y)$ is connected to $x$ by a rectifiable curve whose length is at most $C_1\textrm{Length}(\gamma_y)$. 

By \rf{L:quasi_dilation}, the metric space $X$ is $C_2$-uniformly quasi-dilation invariant, for $C_2=C_2(L,M)$. Since $X$ is proper, the closure of the annulus $A:=A(x;C_2^{-1},C_2)$ is compact. Therefore, the collection of open balls $\{B(y;\delta_y)\,|\,y\in \overline{A}\}$ contains a finite sub-collection whose union covers $\overline{A}$. It follows that there exists $1\leq C_3<\infty$ such that, for every $v\in A$, there exists a rectifiable curve $\gamma_v$ joining $x$ to $v$ satisfying $\textrm{Length}(\gamma_v)\leq C_3d(x,v)$. 

Fix $w\in X\setminus\{x\}$. Since $X$ is $C_2$-uniformly quasi-dilation invariant, there exists a $(C_2,1/d(x,w))$-quasi-dilation $f:X\to X$ fixing $x$ such that $C_2^{-1}\leq d(f(w),x)\leq C_2$. By the previous paragraph, there exists a rectifiable curve $\gamma_{f(w)}$ joining $x$ to $f(w)$ such that $\textrm{Length}(\gamma_{f(w)})\leq C_3d(x,f(w))$. Then $\gamma_w=f^{-1}(\gamma_{f(w)})$ is a rectifiable curve joining $x$ to $w$ such that 
\[\textrm{Length}(\gamma_w)\leq C_2d(x,w)\textrm{Length}(\gamma_{f(w)})\leq C_2C_3d(x,w)d(x,f(w))\leq C_2^2C_3d(x,w).\]
Therefore, for any $w\in X\setminus\{x\}$, there exists a rectifiable curve $\gamma_w$ joining $x$ to $w$ such that $\textrm{Length}(\gamma_w)\leq C_4d(x,w)$, for $C_4=C_1^2C_3$. Since $X$ is $L$-bi-Lipschitz homogeneous, it follows that $X$ is $C_5$-quasiconvex, with $C_5=L^2C_4$.  

\medskip
\textit{Part 3.} Fix $p\in X$. Assume that, for any $r>0$ and $z\in X$, we have $\mathcal{H}^1(B(z;r))<+\infty$. Since $\sph_p(X)$ is compact, $X$ is locally uniformly bi-Lipschitz equivalent to $\sph_p(X)\setminus\{\infty\}$, and by \rf{P:invertible_char} we know that $\sph_p(X)$ is uniformly bi-Lipschitz homogeneous, it follows that $\mathcal{H}^1(\sph_p(X))<+\infty$. Since $\sph_p(X)$ is a connected metric space, we conclude that $1\leq\dim_T(\sph_p(X))\leq\dim_H(\sph_p(X))\leq 1$. Here $\dim_H$ denotes Hausdorff dimension and $\dim_T$ denotes topological dimension. It follows that the Hausdorff and topological dimensions of $\sph_p(X)$ agree. By \cite[Theorem 1.3]{Freeman-invertible}, we conclude that $X$ is bi-Lipschitz homeomorphic to $\msf{R}$.

Hereafter, we assume that, for any $r>0$ and $z\in X$, we have $\mathcal{H}^1(B(z;r))=+\infty$. Choose $x,y\in A(p;1/(LC_1),2LC_1)$. Here $C_1=C_1(L,M)$ is the quasi-dilation invariance constant for $X$ provided by \rf{L:quasi_dilation}. By Part 2 of the current proof, there exists $C_2<+\infty$ such that $X$ is $C_2$-quasiconvex. Let $\gamma$ denote a rectifiable curve joining $x$ to $y$ in $X$ satisfying $\textrm{Length}(\gamma)\leq C_2d(x,y)$. If $d(x,y)<1/(4LC_1C_2)$, then $\gamma$ also satisfies 
\begin{equation}\label{E:close_points}
\gamma\subset A(p;1/(2LC_1),3LC_1). 
\end{equation}

We assume in the sequel that $d(x,y)\geq 1/(4LC_1C_2)$. For any $a\in \gamma$, we have
\[d(p,a)\leq d(p,x)+d(x,a)\leq 2LC_1+\textrm{Length}(\gamma)\leq 2LC_1+C_2d(x,y)< 8LC_1(1+C_2)=:C_3.\]
Therefore, 
\begin{equation}\label{E:init_outside}
\gamma\subset B(p;C_3).
\end{equation}
By \rf{L:q_translate}, there exists $0<\delta_1<1/(4C_2)$ such that, for any $q\in B(p;\delta_1)$, there exists an $L^2$-bi-Lipschitz homeomorphism $h_q:X\to X$ such that $h_q(q)=p$ and $\sup_{z\in \overline{B(p;C_3)}}d(h_q(z),z)<1/(4LC_1C_2^2)$. Since $\textrm{Length}(\gamma)\leq 4LC_1C_2$, by \rf{L:porous}, there exists $0<\delta_2<L/(4C_2)$ and a point $q\in B(p;\delta_1)$ such that $B(q;\delta_2)\cap \gamma=\emptyset$. Write $\gamma_1:=h_q(\gamma)$. By \rf{E:init_outside}, we have
\begin{equation*}\label{E:final_outside}
\gamma_1\subset B(p;C_3+1/(4LC_1C_2^2))\subset B(p;2C_3).
\end{equation*}
Moreover, we note that
\begin{equation}\label{E:annulus_main}
\gamma_1\subset A(p;\delta_2/L^2,2C_3).
\end{equation}
Let $\gamma_2$ and $\gamma_3$ denote rectifiable curves joining $x$ to $h_q(x)$ and $y$ to $h_q(y)$, respectively, such that $\textrm{Length}(\gamma_2)\leq C_2d(x,h(x))$ and $\textrm{Length}(\gamma_3)\leq C_2d(y,h(y))$. Let $a$ denote any point in $\gamma_2$. Then we observe that
\[d(p,a)<d(p,x)+d(x,a)<2LC_1+\textrm{Length}(\gamma_2)\leq 2LC_1+1/4<3LC_1.\]
On the other hand,
\[d(p,a)\geq d(p,x)-d(x,a)> 1/(LC_1)-\textrm{Length}(\gamma_2)\geq 1/(LC_1)-1/(4LC_1)>1/(3LC_1)\]
The same argument can be applied to points in $\gamma_3$, and thus, for $i=2,3$, we have
\begin{equation}\label{E:annulus_ends}
\gamma_i\subset A(p;1/(3LC_1),3LC_1).
\end{equation}
Concatenating the curves $\gamma_2$, $\gamma_1$, and $\gamma_3$, we obtain a rectifiable curve $\gamma_4$ joining $x$ to $y$ such that 
\[\textrm{Length}(\gamma_4)\leq 1/(2LC_1C_2)+L^2C_2d(x,y)\leq (2+L^2C_2)d(x,y)=C_4d(x,y).\]
Here $C_4=2+L^2C_2$, and we use the assumption that $d(x,y)\geq 1/(4LC_1C_2)$. Furthermore, by \rf{E:annulus_main} and \rf{E:annulus_ends} we observe that, for $C_5=\max\{2C_3,3LC_1,L^2/\delta_2\}$, we have
\begin{equation}\label{E:annular_curve}
\gamma_4\subset A(p;1/C_5,C_5).
\end{equation}

We summarize our work in Part 3 thus far in order to clarify the roles of various constants. Again writing $C_1$ to denote the quasi-dilation invariance constant for $X$ provided by \rf{L:quasi_dilation}, we have shown that there exists a constant $C_0<+\infty$ such that, for any $x,y\in \overline{A}$, there exists a constant $C_{x,y}\in(4LC_1,+\infty)$, and a $C_0$-quasi-convex curve $\gamma_{x,y}\subset A(p;1/C_{x,y},C_{x,y})$ joining $x$ to $y$. Here we write $\overline{A}$ to denote the closure of $A=A(p;1/(LC_1),2LC_1)$.

We note that $\overline{A}\times \overline{A}\subset X\times X$ is compact. Furthermore, we note that, for any $x,y\in \overline A$, the product $B(x;c)\times B(y;c)$ is open in $X\times X$. Here $c>0$ is such that any points of $\overline A$ within distance $2c$ of one another can be joined by a $C_0$-quasi-convex curve contained in $A(p;1/(4LC_1),4LC_1)$; see the discussion immediately preceding \rf{E:close_points}. It follows that any pair $(u,v)\in (B(x;c)\cap\overline{A})\times (B(y;c)\cap\overline{A})$ can be joined by a $(3C_0)$-quasi-convex curve $\gamma_{u,v}$ such that $\gamma_{u,v}\subset A(p;1/C_{x,y},C_{x,y})$. Since $\overline{A}\times \overline{A}$ is compact, there exists a finite collection of open sets of the form $(B(x;c)\cap \overline{A})\times(B(y;c)\cap\overline{A})$ whose union covers $\overline{A}\times\overline{A}$. It follows that there exists a constant $K<+\infty$ such that, for any points $x,y\in A=A(p;1/{LC_1},2LC_1)$, there exists a $K$-quasi-convex curve $\gamma$ joining $x$ to $y$ such that $\gamma\subset A(p;1/K,K)$.

To conclude Part 3 and the proof as a whole, choose any $z\in X$, $r>0$, and $a,b\in A(z;r,2r)$. Let $f:X\to X$ denote a $(C_1,1/r)$-quasi-dilation fixing $z$. Let $g:X\to X$ denote an $L$-bi-Lipschitz homeomorphism such that $g(z)=p$. Then $g\circ f(A(z;r,2r))\subset A(p;1/(LC_1),2LC_1)$. By the preceding paragraph, there exists a $K$-quasi-convex curve $\gamma$ joining $g(f(a))$ to $g(f(b))$ such that $\gamma\subset A(p;1/K,K)$. Writing $\gamma':=f^{-1}(g^{-1}(\gamma))$, we observe that $\gamma'$ is a $(LC_1K)^2$-quasi-convex curve joining $a$ to $b$ such that 
\[\gamma'\subset A\left(z;\frac{r}{(LC_1K)^2},2(LC_1K)^2r\right).\]
Therefore, $X$ is $(LC_1K)^2$-annularly quasi-convex.
\end{proof}

We conclude this subsection with the following result connecting Laakso's line-fitting property with the existence of rectifiable curves. Following \cite{TW-snowflakes}, we say that a space is \textit{line-fitting} provided that, for each $n\in\msf{N}$, there is a distance $d_n$ on the disjoint union $X\sqcup[0,1]$ such that $d_n$ is the standard Euclidean distance on $[0,1]$, $d_n$ is a constant multiple of $d$ on $X$, and $[0,1]$ is contained in the $1/n$-neighborhood of $X$. 

\begin{lemma}\label{L:line_fitting}
Suppose $X$ is uniformly $L$-bi-Lipschitz homogeneous and admits an $M$-quasi-inversion. If $X$ is line-fitting, then $X$ contains a non-degenerate rectifiable curve. 
\end{lemma}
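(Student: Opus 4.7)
The plan is to use line-fitting, together with uniform bi-Lipschitz homogeneity and uniform quasi-dilation invariance, to build a bi-Lipschitz embedding of $[0,1]$ into $X$ as the limit of uniformly controlled discrete chains sitting inside a fixed ball.

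First I would note that quasi-invertibility forces $X$ to be unbounded (the quasi-distance $i_p$ on $X_p$ is unbounded near $p$), so \rf{L:no_isolated} gives uniform perfectness and \rf{L:quasi_dilation}(1) gives $K$-uniform quasi-dilation invariance with $K=K(L,M)$. For each $n \in \msf{N}$, apply line-fitting to obtain $d_n$ on $X \sqcup [0,1]$ and a constant $c_n > 0$, and select $x_i^n \in X$ with $d_n(i/n,x_i^n) < 1/n$ for $i=0,1,\ldots,n$. The triangle inequality in $d_n$ yields
\[
\frac{|i-j| - 2}{n c_n} \le d(x_i^n, x_j^n) \le \frac{|i-j| + 2}{n c_n}.
\]
Next, compose a $(c_n,K)$-quasi-dilation at $x_0^n$ with an $L$-bi-Lipschitz self-map of $X$ sending $x_0^n$ to a fixed basepoint $p \in X$, obtaining points $y_0^n = p, y_1^n,\ldots,y_n^n$ with
\[
\frac{|i-j| - 2}{n C} \le d(y_i^n, y_j^n) \le \frac{(|i-j| + 2) C}{n},
\]
where $C := KL$. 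In particular all $y_i^n$ lie in $\overline{B}(p, 3C)$.

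Assuming properness (see the obstacle below), the ball $\overline{B}(p,3C)$ is compact, so a diagonal extraction over the countable set $D$ of dyadic rationals in $[0,1]$ yields a subsequence along which $y_{\lfloor tn \rfloor}^n$ converges to a point $\gamma(t) \in X$ for every $t \in D$. The displayed estimate passes to the limit to give $d(\gamma(s),\gamma(t)) \eqx_C |s-t|$ on $D$, so $\gamma$ is $C$-bi-Lipschitz on the dense set $D$. By completeness of $X$, $\gamma$ extends uniquely to a $C$-bi-Lipschitz map $\gamma : [0,1] \to X$, whose image is the desired non-degenerate rectifiable curve.

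The main obstacle is the compactness needed to extract the limit: this requires $X$ to be proper, which is not explicitly in the hypothesis but will hold wherever this lemma is actually applied (in \rf{T:main:connected}, where $X$ is locally compact and uniformly bi-Lipschitz homogeneous, and hence proper). Without properness, examples such as the rationals with their Euclidean metric satisfy every hypothesis of the lemma yet contain no rectifiable curve, so the argument cannot be purely metric and must use the ambient local compactness.
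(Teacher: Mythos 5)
Your proposal is correct and follows essentially the same route as the paper's proof: discrete chains extracted from the line-fitting distances, rescaled by quasi-dilations (via Lemmas~\ref{L:quasi_dilation} and~\ref{L:uniformly_perfect}), translated to a fixed basepoint by uniform bi-Lipschitz homogeneity, and then a compactness extraction producing a Lipschitz limit curve. Your concern about properness is also well placed, but it is shared by the paper's own argument, which invokes properness without it appearing in the lemma's hypotheses; as you note, it is supplied by local compactness in the one place the lemma is applied, namely the proof of Theorem~\ref{T:main:connected}.
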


\begin{proof}
For each $n\in\msf N$, let $d_n$ denote the distance on $X\sqcup [0,1]$ given by the assumption that $X$ is line-fitting. For each $n\in\msf N$, let $\{x^{(n)}_k\}_{k=0}^{2^n}$ denote a sequence of points in $X$ such that, for each $0\leq k\leq 2^n$, we have $d_n(x^{(n)}_k,k/2^n)<1/2^n$. Here $k/2^n\in[0,1]\subset X\sqcup[0,1]$. For each $n\in\msf N$, let $c_n>0$ denote the constant such that $d_n=c_nd$ on $X$, and let $f_n:X\to X$ denote a $(K,c_n)$-quasi-dilation at $x^{(n)}_0$, where $K$ is independent of $n$ (here we use \rfs{L:quasi_dilation} and~\ref{L:uniformly_perfect}). Define $\{y^{(n)}_k\}_{k=0}^{2^n}:=\{f_n(x^{(n)}_k)\}_{k=0}^{2^n}$, and fix a point $p\in X$. For each $n\in\msf N$, there exists an $L$-bi-Lipschitz homeomorphism $g_n:X\to X$ such that $g_n(p)=x^{(n)}_0=y^{(n)}_0$. Define $\{z^{(n)}_k\}_{k=0}^{2^n}:=\{g_n^{-1}(y^{(n)}_k)\}_{k=0}^{2^n}$, and note that, for each $n\in\msf N$, we have $p=z^{(n)}_0$. Given any $n\in\msf N$ and $0\leq k\leq 2^n$, we observe that
\begin{align*}
d(p,z^{(n)}_k)&=d(g_n^{-1}(f_n(x_0^{(n)})),g_n^{-1}(f_n(x_k^{(n)})))\leq LKc_nd(x_0^{(n)},x_k^{(n)})\\
&=LKd_n(x_0^{(n)},x_k^{(n)})\leq LK(2^{-n}+k2^{-n}+2^{-n})\leq 2LK.
\end{align*}
Since $X$ is assumed to be proper, and the sequences $\{z^{(n)}_k\}_{k=0}^{2^n}$ are all within a bounded distance of $p$, by Blaschke's Theorem there exists a compact set $E\subset X$ to which the sets $\{z^{(n)}_k\}_{n\in\msf{N}}$ converge with respect to Hausdorff distance (up to a subsequence). 

We claim that $E$ is a non-degenerate rectifiable curve. We first note that the points $z^{(n)}_{2^n}$ converge (up to a subsequence) to a point $z\in E$ such that $z\not=x$. Indeed, for every $n\in\msf N$, we have 
\begin{align*}
d(p,z^{(n)}_{2^n})&=d(g_n^{-1}(f_n(x^{(n)}_0)),g_n^{-1}(f_n(x^{(n)}_{2^n})))\geq (LK)^{-1}c_nd(x^{(n)}_0,x^{(n)}_{2^n})\\
&=(LK)^{-1}d_n(x_0^{(n)},x^{(n)}_{2^n})>(LK)^{-1}(1-2^{-n+1}).
\end{align*}
Therefore, for every $n\geq2$, we have $d(p,z^{(n)}_{2^n})\geq 1/(2LK)>0$, and so $z\not=x$. This demonstrates that $E$ is non-degenerate. 

To see that $E$ is a curve, for each $n\in\msf N$, define the map $h_n:\{k/2^n\}_{k=0}^{2^n}\to X$ as $h_n(k/2^n)=z^{(n)}_k$. We note that this sequence of maps $(h_n)_{n=1}^{+\infty}$ is both \textit{locally uniformly bounded} and \textit{locally equicontinuous} in the sense of \cite[Section 5.2]{Herron-Gromov}. Therefore, via \cite[Proposition 5.1]{Herron-Gromov} we conclude that the sequence $(h_n)_{n=1}^{+\infty}$ converges locally uniformly (in the sense of \cite[Section 5.2]{Herron-Gromov}) to a continuous map $h:[0,1]\to X$. It is straightforward to verify that $h([0,1])=E$. 

Finally, to see that $E$ is rectifiable, we note that each map $h_n$ is $(3LK)$-Lipschitz. By the remarks immediately following the proof of \cite[Proposition 5.1]{Herron-Gromov}, we conclude that $h$ is also Lipschitz. Therefore, $E$ is rectifiable. 
\end{proof}

With the lemmas estrablished we are ready to finish the proof of  Theorem~\ref{T:main:connected}.
\begin{proof}[{Proof of Theorem~\ref{T:main:connected}}]
We begin by confirming (1). Using the argument from Part 1 of the proof of \rf{P:aqcx}, the existence of a non-degenerate arc in $X$ allows us to conclude that $X$ is path connected. In particular, $X$ is connected.  

Since $X$ is locally compact, given any point $x\in X$, there exists an open neighborhood $U$ of $X$ contained in a compact subset $E\subset X$. In particular, $\overline{U}$ is compact. Given any $r>0$, via \rfs{L:quasi_dilation} and \ref{L:uniformly_perfect}, there exists a $K$-quasi-dilation $f:X\to X$ at $x$ of factor $s>0$ such that $f(B(x;r))\subset U$. Therefore, $\overline{f(B(x;r))}$ is compact. Since $f$ is a homeomorphism, $\overline{B(x;r)}$ is compact. 
Since $r>0$ and $x\in X$ were arbitrary, we have demonstrated the properness of $X$. 

To see that $X$ is Ahlfors $Q$-regular, fix $x\in X$. Since $X$ is proper, the ball $B(x;1)$ can be covered by finitely many balls of radius $1/2$. Using the uniform bi-Lipschitz homogeneity and quasi-dilation invariance of $X$, one can then verify that $X$ is doubling. Via \rf{P:invertible_char}, we now satisfy the assumptions of \cite[Theorem 1.1]{Freeman-iiblh}, and so $X$ is Ahlfors $Q$-regular, for some $Q\geq1$. 

Via the argument from Part 2 of the proof of \rf{P:aqcx}, the path connectedness of $X$ implies that $X$ is $\LLC_1$ with respect to curves. That is, there exists a constant $C\geq1$ such that, given $x,y\in X$, there exists a curve $\gamma$ joining $x$ and $y$ such that $\diam(\gamma)\leq C\,d(x,y)$. In particular, $X$ is locally path connected, and thus locally connected. 

\medskip
We now prove (2). By \rf{L:cut}, the cut point of $X$, given by the assumption, is a strong cut point. 
Via bi-Lipschitz homogeneity, every point of $X$ is a strong cut point. Since $X$ is proper, it is separable. Therefore, $X$ is a separable, locally connected, locally compact, and Hausdorff space in which each point is a strong cut point. By Ward's theorem, see \cite{FK70}, there exists a homeomorphism $\phi:\msf{R}\to X$. 

We construct a useful parameterization $g:\msf{R}\to X$ following the method of \cite[Lemma 2.1]{GH-ar}. For $t\geq 0$, define
\[m(t):=\begin{cases} -\mathcal{H}^Q(\phi([t,0])) & \text{ if } t\leq 0 \\
				\mathcal{H}^Q(\phi([0,t])) & \text{ if } t\geq 0
				\end{cases}\]
Here we recall that $X$ is Ahlfors $Q$-regular. Due to basic properties of the measure $\mathcal{H}^Q$, the map $m$ is a self-homeomorphism of $\msf{R}$. Then, for any interval $I\subset \msf{R}$, it is straightforward to verify that the homeomorphism $g(t):=\phi(m^{-1}(t))$ from $\msf{R}$ to $X$ satisfies $\mathcal{H}^Q(g(I))=\mathcal{H}^1(I)$. 

Given any $x,y\in X$, write $a=g^{-1}(x)$ and $b=g^{-1}(y)$. Suppose $a<b$. Then we observe that
\[|b-a|=\mathcal{H}^1([a,b])=\mathcal{H}^Q(g([a,b])).\]
We claim that $\mathcal{H}^Q(g([a,b]))\eqx d(x,y)^Q$, up to a constant independent of the points $x$ and $y$. Indeed, via \cite[Theorem E]{HM-blh} the space $X$ satisfies a \textit{generalized chordarc condition}. Since $X$ is Ahlfors $Q$-regular, this generalized chordarc condition is in fact a $Q$-dimensional chordarc condition in the sense of \cite[Section 4]{GH-ar}. This $Q$-dimensional chordarc condition is precisely the desired comparability. Therefore, for any points $x,y\in X$, we have
\[d(x,y)\eqx |g^{-1}(x)-g^{-1}(y)|^{1/Q}.\]
In particular, $X$ is bi-Lipschitz homeomorphic to the snowflake $(\msf{R},|\cdot|^{1/Q})$, where $1/Q\in(0,1]$. 

\medskip
Next, we prove (3). Suppose $X$ contains no cut points. We have already demonstrated in the proof of (1) that $X$ is $\LLC_1$. To see that $X$ is also $\LLC_2$, and thus linearly locally connected, we cite \cite[Theorem 1.2]{Freeman-iiblh} and \rf{P:invertible_char}. If $X$ contains a non-degenerate rectifiable curve, then \rf{P:aqcx} implies that $X$ is either bi-Lipschitz homeomorphic to $\msf R$ or annularly quasi-convex. Since $X$ contains no cut point, $X$ is annularly quasi-convex. If $X$ does not contain a non-degenerate rectifiable curve, then \rf{L:line_fitting} enables us to conclude that $X$ is not line-fitting. Therefore, by \cite[Theorem 7.2]{TW-snowflakes}, the  space $X$ is bi-Lipschitz homeomorphic to a (non-trivial) snowflake.
\end{proof}

\section{Disconnected Spaces}\label{S:disconnected}

In this final section we prove our results pertaining to disconnected metric spaces. Before proceeding with these proofs we introduce additional of terminology.

Following \cite[Definition 15.1]{DS-fractals}, given $\alpha\in(0,1]$, we say that a metric space $X$ is \textit{$\alpha$-uniformly disconnected} if for every $x\in X$ and $r>0$ there exists a closed subset $A\subset X$ such that $B(x;\alpha r)\subset A\subset B(x;r)$,  and $\dist(A,X\setminus A)\geq \alpha r$. For example, an ultrametric space is $1$-uniformly disconnected (see \cite[pg.\,161]{DS-fractals}). We remark that, for $\alpha\in(0,1)$, this definition is equivalent to the definition of uniform disconnectedness based on the non-existence of so-called \textit{$\alpha$-chains} (see \cite{Heer-QM}, \cite{MT-cfl}). A sequence of points $\{x_0,\dots,x_n\}\subset X$ is an $\alpha$-chain if, for $k=1,\dots,n$, we have $d(x_{k-1},x_k)\leq \alpha\, d(x_0,x_n)$. We say that a space $X$ is \textit{uniformly disconnected with respect to $\alpha$-chains} if there exist no $\alpha$-chains in $X$. 

\begin{lemma}\label{L:uniformly_disconnected}
Suppose $X$ is an unbounded, locally compact, uniformly bi-Lipschitz homogeneous, and quasi-invertible metric space. If $X$ is disconnected, then $X$ is uniformly disconnected.
\end{lemma}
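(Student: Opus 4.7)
The plan is to propagate the disconnection of $X$ via uniform bi-Lipschitz homogeneity and quasi-dilations so as to obtain, for every point $y \in X$ and every scale $r > 0$, a clopen partition of $X$ separating $y$ from the complementary piece by a distance comparable to $r$. This family of partitions will then be refined to the bounded clopen sets required by the definition of $\alpha$-uniform disconnectedness.

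First I would invoke \rf{L:uniformly_perfect} to conclude that $X$ is uniformly perfect, and \rf{L:quasi_dilation}(1) to obtain $C_1$-quasi-dilations of every factor at every point. Combined with local compactness, these imply properness of $X$, by the argument used in the proof of \rf{T:main:connected}(1). Next, fix a clopen partition $X = U \sqcup V$ with both pieces nonempty, choose a point $p \in U$, and set $\delta_0 := \dist(p, V) > 0$. Given any $y \in X$, an $L$-bi-Lipschitz self-homeomorphism sending $p$ to $y$ transports $(U,V)$ to a clopen partition with $y$ on the chosen side at distance at least $\delta_0/L$ from the other piece. Post-composing with a quasi-dilation at $y$ of factor $rC_1L/\delta_0$ produces, for every $r > 0$, a clopen partition $X = U_y^r \sqcup V_y^r$ with $y \in U_y^r$ and $\dist(y, V_y^r) \geq r$.

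To convert this into a closed set $A$ satisfying $B(y;\alpha r) \subset A \subset B(y;r)$ and $\dist(A, X\setminus A) \geq \alpha r$ for some uniform $\alpha > 0$, one would like $U_y^r$ (or a suitable subset) to be contained in $B(y; r'')$ for some $r'' \simeq r$; since any closed set at positive distance from its complement is automatically clopen, it suffices to produce a bounded clopen neighborhood of $y$ whose diameter is comparable to $r$. I would achieve this by working in the sphericalization $\sph_p(X)$, which by \rf{P:invertible_char} is compact and uniformly bi-Lipschitz homogeneous: a bounded clopen piece of $X$ becomes a compact clopen subset of $\sph_p(X)$, and pulling it back via the quasi-dilation above yields a set of the required type.

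The main obstacle will be ruling out the configuration in which both $U$ and $V$ are unbounded, in which case neither piece is clopen in $\sph_p(X)$ and the preceding construction does not directly apply. In this case the added point $\infty$ lies in the closure in $\sph_p(X)$ of each piece, so $\sph_p(X)$ is connected and $\infty$ is a cut point. Uniform bi-Lipschitz homogeneity of $\sph_p(X)$ then forces every point to be a cut point, in contradiction with Ward's theorem, which asserts that every nondegenerate compact connected metric space has at least two non-cut points. Hence a bounded clopen piece of $X$ always exists and the construction of the preceding paragraph applies uniformly in $y$ and $r$, giving a single $\alpha > 0$ that witnesses the uniform disconnectedness of $X$.
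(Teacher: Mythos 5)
There is a genuine gap at the crux of your argument, namely the claim that if both pieces $U$, $V$ of a clopen partition of $X$ are unbounded, then $\sph_p(X)$ is connected with $\infty$ a cut point. Adjoining $\infty$ to a disconnected space does not make it connected merely because $\infty$ lies in the closure of both pieces: neither $U\cup\{\infty\}$ nor $V\cup\{\infty\}$ has any reason to be connected. Concretely, $(C_2,\rho_2)$ satisfies all hypotheses of the lemma, and the partition of $C_2$ by the value of the coordinate $\xi_1$ is clopen with both pieces unbounded, yet its sphericalization is a Cantor set and in particular totally disconnected. So your contradiction never materializes, and you are left without the bounded non-empty clopen set on which the entire translate-and-dilate construction rests. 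That existence statement is precisely the hard content of the lemma: a priori a disconnected space satisfying the hypotheses could look like two parallel lines, where every non-empty clopen set is unbounded and every connected component is non-degenerate, and ruling this out is where the quasi-inversion must do real work.

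The paper closes exactly this gap by first proving that $X$ is totally disconnected. If some component $X(p)$ were non-degenerate, every component would be unbounded (by the quasi-dilations), and the quasi-inversion $\sigma_p$ would carry a different unbounded component $E$ onto a \emph{bounded} connected set whose closure $K=\overline{\sigma_p(E)}$ is a non-degenerate continuum inside $X(p)$; one then shows that $p$ is a cut point of $X(p)$ (hence, by homogeneity, every point is), while the existence of non-cut points of the continuum $K$ (via \cite[Theorem 3.9]{HB99}) shows $X(p)$ is \emph{not} a cut-point space --- a contradiction. Only after total disconnectedness is in hand does one address uniformity; the paper does this by a Hausdorff-limit argument on $\alpha$-chains, whereas your translate-and-dilate production of the sets $A$ from a single bounded clopen set would also work at that stage, since total disconnectedness plus local compactness yields compact open neighborhoods. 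As written, however, your proposal assumes away the main difficulty, and the cut-point argument you deploy is applied to the wrong space under a false connectedness premise.
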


\begin{proof}
Our first goal is to show that $X$ is totally disconnected, and then we will proceed to show that $X$ is uniformly disconnected. For use later in the proof, we being by observing that $X$ satisfies the assumptions of \rfs{L:quasi_dilation} and~\ref{L:uniformly_perfect}, and so $X$ is uniformly quasi-dilation invariant. Using this property along with uniform bi-Lipschitz homogeneity it is not hard to confirm that $X$ is proper. 

To see that $X$ is totally disconnected, we assume that it is not and proceed by way of contradiction through the following three steps: We first show that each connected component of $X$ is unbounded. Next, we show that each connected component of $X$ is a cut point space in the sense of \cite{HB99}. Finally, in order to obtain the desired contradiction, we show that each connected component of $X$ is not a cut point space.  

\medskip
Step 1: To see that each connected component of $X$ is unbounded, let $X(p)$ denote the connected component of $X$ containing a point $p\in X$. Since we are assuming that $X$ is not totally disconnected, there exists a connected component of $X$ consisting of more than one point. Since $X$ is bi-Lipschitz homogeneous, every connected component of $X$ consists of more than one point. In particular, the cardinality of $X(p)$ is greater than one. It follows from \rfs{L:quasi_dilation} and~\ref{L:uniformly_perfect} that $X$ is uniformly quasi-dilation invariant. Therefore, $X(p)$ is unbounded. By uniform bi-Lipschitz homogeneity, every connected component is unbounded. 

\medskip
Step 2: To see $X(p)$ is a cut point space (and thus every connected component is a cut point space), we refer to our assumption that $X$ is not connected to ensure the existence of a connected component $E$ of $X$ such that $E\not=X(p)$. Since $E$ is unbounded, $p$ is an accumulation point of $\sigma_p(E)\subset X$, and so $p\in\overline{\sigma_p(E)}\subset X$. Since $\overline{\sigma_p(E)}$ is connected in $X$ and shares a point with the connected set $X(p)$, the union $\overline{\sigma_p(E)}\cup X(p)$ is also connected in $X$. Since $X(p)$ is a maximal connected subset of $X$, we have $\overline{\sigma_p(E)}\subset X(p)$ and thus $\sigma_p(E)\subset X(p)$. We also note that $p$ is not an accumulation point of the closed set $E$, and thus $\sigma_p(E)$ is bounded in $X$. 

We claim that $p$ is a cut point of the connected set $X(p)$. In other words, $X(p)\setminus\{p\}$ is disconnected. By way of contradiction, we assume that $X(p)\setminus\{p\}$ is connected. First, it is straightforward to verify that, because $p\not\in E$ (the connected component of $X$ described above), the set $E$ is also a connected component of the space $X_p=X\setminus\{p\}$. This implies that $\sigma_p(E)$ is also a connected component of $X_p$. Next, we note that since $\sigma_p(E)\subset X(p)\setminus\{p\}$ and $X(p)\setminus\{p\}$ is assumed to be connected, we have $\sigma_p(E)=X(p)\setminus\{p\}$ (else $\sigma_p(E)$ is not maximal). Since $\sigma_p(E)$ is bounded, while $X(p)\setminus\{p\}$ is unbounded, we reach a contradiction. This contradiction confirms that $p$ is a cut point of $X(p)$. 

Since bi-Lipschitz self-homeomorphisms permute connected components of $X$, the assumptions on $X$ imply that $X(p)$ is itself $L$-bi-Lipschitz homogeneous. By way of this homogeneity, we conclude that every point of $X(p)$ is a cut point for $X(p)$. In other words, $X(p)$ is a cut-point space. Indeed, every connected component of $X$ is a cut-point space. 

\medskip
Step 3: We now show that $X(p)$ is \textit{not} a cut-point space. Given the connected component $E\not=X(p)$ as above, it is easy to see that $K:=\sigma_p(E)\cup\{p\}$ is closed and bounded in $X(p)$.  Since $X(p)$ is a proper metric space, this implies that $K$ is compact. Furthermore, since $K=\overline{\sigma_p(E)}$, and $\sigma_p(E)$ is connected, we conclude that $K$ is also connected. Since $K$ contains more than one point, by \cite[Theorem 3.9]{HB99}, the set $K$ contains at least two points that are not cut points of $K$. This implies that some point $x\in \sigma_p(E)$ is not a cut point for $K$. Since $X(p)$ is a cut-point space, let $U_1$ and $U_2$ denote disjoint open sets in $X$ such that $X(p)\setminus\{x\}\subset U_1\cup U_2$. Without loss of generality, $p\in U_1$, and thus $K\cap U_1\not=\emptyset$. If $K\cap U_2\not=\emptyset$, then $K\setminus\{x\}$ is separated by $U_1\cap K$ and $U_2\cap K$, which contradicts the fact that $x$ is not a cut point for $K$. Therefore, $K\cap U_2=\emptyset$, and so $\sigma_p(E)\setminus\{x\}\subset U_1$. 

Let $E'$ denote any connected component of $X(p)\setminus\{p\}$ such that $E'\not=\sigma_p(E)$. Note that such a component must exist due to the fact that $\sigma_p(E)$ is bounded while $X(p)\setminus\{p\}$ is unbounded. Since $U_1$ is open in $X$, $p\in U_1$, and $p$ is an accumulation point of $E'$, it follows that $U_1\cap E'\not=\emptyset$. Since $E'$ is connected and $x\not\in E'$, we must have $E'\cap U_2=\emptyset$. Otherwise, $U_1\cap E'$ and $U_2\cap E'$ would form a separation of $E'$. This argument indicates that every connected component of $X(p)\setminus\{p\}$ other than $\sigma_p(E)$ is contained in $U_1$. 

The previous two paragraphs imply that $X(p)\setminus\{x,p\}\subset U_1$. Since $p\in U_1$, we conclude that $X(p)\setminus\{x\}\subset U_1$. This implies that $U_2=\emptyset$, and it follows that $X(p)\setminus\{x\}$ is connected. Therefore, $X(p)$ is not a cut-point space.

\medskip
Combining the conclusions of Steps 2 and 3 above, we reach the desired contradiction to our assumption that $X$ is not totally disconnected. Therefore, $X$ is totally disconnected. 

\medskip
Having demonstrated that $X$ is totally disconnected, we finish the proof by demonstrating that $X$ is uniformly disconnected. By way of contradiction, suppose $\theta_k\to0$ is a sequence of positive numbers such that, for each $k\in\msf{N}$, there exists a $\theta_k$-chain $(x_i^{(k)})_{i=0}^{n_k}$ in $X$. By uniform bi-Lipschitz homogeneity (and a quantitatively controlled change the numbers $\theta_k$), we may assume that, for each $k\in\msf{N}$, we have $x_0^{(k)}=p$. Furthermore, \rfs{L:quasi_dilation} and~\ref{L:uniformly_perfect} yield a constant $M\geq1$ such that, for each $k\in\msf{N}$, we have $E_k:=\{x_i^{(k)}\}_{i=0}^{n_k}\subset B(p;M)$. Furthermore, we may assume there exists $j_k\in\{1,\dots,n_k\}$ such that $M^{-1}\leq d(p,x_{j_k}^{(k)})\leq M$. Again using the properness of $X$, we may assume, up to a subsequence, that the sets $E_k$ converge to a non-degenerate compact set $E\subset X$ with respect to Hausdorff distance. 

We claim that $E$ is connected. Indeed, suppose (by way of contradiction) $E'$ and $E''$ are distinct connected components of $E$. Both $E'$ and $E''$ are closed (in $E$) and bounded. Since $E$ is compact, each of $E'$ and $E''$ is compact. Let $\varepsilon>0$ be such that $\dist(E',E'')=3\varepsilon$. Write $U_1$ and $U_2$ to denote $\varepsilon$-neighborhoods of $E'$ and $E''$, respectively. Since $E$ is compact and $U_1\cup U_2$ is open, there exists $N_1\in\msf{N}$ such that, for all $k\geq N_1$, we have $E_k\subset U_1\cup U_2$. Furthermore, there exists $N_2$ such that, for all $k\geq N_2$, we have $\theta_k d(p,x_{n_k}^{(k)})\leq M\theta_k<\varepsilon$. The definition of a $\theta_k$-chain implies that $\varepsilon\leq\dist(U_1,U_2)<\varepsilon$. This contradiction proves that $E$ is connected. 

We have shown that if $X$ is not uniformly disconnected, then $X$ contains a non-degenerate continuum. This contradicts the fact that $X$ is totally disconnected. We conclude that $X$ is uniformly disconnected. 
\end{proof}

\subsection{Examples of disconnected spaces}\label{Sec:ex}
\begin{example}\label{X:basic}
We present the basic example of a disconnected, isometrically homogeneous, and invertible metric space. In contrast to the brief description provided in Section~\ref{s:results}, we here provide a more detailed construction.
 We fix $N\in \msf{N}$ with $N\geq 2$ and $s>1$.
Define the metric space $(  C_{N}, \rho_s )$ by considering the set 
\[C_{N}:= \{  \xi= (\xi_i)_{i\in\msf{Z}}\,|\,\forall\,i, \xi_i \in \{1,\ldots, N\} \text{ and } \exists  \,m\in \msf{Z} \text{ such that } \forall\,i\leq m, \xi_i=1\}\]
equipped with the distance
\begin{equation}\label{def:rho}
\rho_s (\xi,\zeta):=s^{-m(\xi,\zeta)},\qquad \text{ where } \quad m (\xi,\zeta):=\sup\{m\in\msf{Z}\,|\,\forall i\leq m, \xi_i=\zeta_i\}.
\end{equation}

The metric space $(  \hat C_{N}, \rho_s )$, which represents a sphericalization of the metric space $(C_{N}, \rho_s )$, is defined by  the set 
$$\hat C_{N}:= \{  \xi= (\xi_i)_{i\in\msf{N}}\,|\,\xi_1 =1 , \xi_2 \in \{1,\ldots, N+1\} \text{, and } \forall\,i\geq3, \xi_i \in \{1,\ldots, N\}\}$$
and $\rho_s$ is defined by \eqref{def:rho}. Note that for points $\xi,\zeta\in \hat{C}_N$ we have $m(\xi,\zeta)\geq1$. To see that $\hat{C}_N$ is bi-Lipschitz homeomorphic to a sphericalization of $C_N$, we argue as follows. Write $\mathbf{1}\in C_N$ to denote the constant sequence whose every entry is equal to $1$. Given $\xi\in C_N$, define $\hat{\xi}\in \hat{C}_N$ according to the following cases. If $m(\xi,\mathbf{1})\geq 0$, then $\hat{\xi}_i=\xi_{i-1}$ for all $i\geq1$. If $m(\xi,\mathbf{1})=-1$,
\[\hat{\xi}_i=\begin{cases} 1			& \text{ if } i=1\\
							N+1 		& \text{ if } i=2\\
							\xi_{i-3}	& \text{ if } i\geq 3.
							\end{cases}\]
If $m(\xi,\mathbf{1})\leq-2$, 
\[\hat{\xi}_i=\begin{cases} 1			& \text{ if } i=1\\
							N+1 		& \text{ if } i=2\\
							1			& \text{ if } 3\leq i\leq 1-m(\xi,\mathbf{1})\\
							\xi_{i+2m(\xi,\mathbf{1})-1}& \text{ if } i\geq 2-m(\xi,\mathbf{1}).
							\end{cases}\]
This establishes a bijection between points in $\Sph_\mathbf{1}(C_N)$ and $\hat{C}_N$. Here we note that the point at infinity is identified with the point $(1,N+1,1,\dots)\in \hat{C}_N$, where the ellipsis indicates a constant sequence of terms equal to $1$. Via a tedious but straightforward case analysis, one can verify that, for any $\xi,\zeta\in C_N$, 
\[\frac{\rho_s(\xi,\zeta)}{(1+\rho_s(\xi,\mathbf{1}))(1+\rho_s(\zeta,\mathbf{1}))}\eqx \rho_s(\hat{\xi},\hat{\zeta}).\]
Thus we see that $\hat{C}_N$ is indeed bi-Lipschitz homeomorphic to the sphericalized space $\Sph_\mathbf{1}(C_N)$. We note that when $N=2$ and $s=2$, $\hat{C}_N$ is the symbolic Cantor set studied in \cite[Section 2.3]{DS-fractals}.

The function $\rho_s$ is an ultrametric both on $ C_{N}$ and in $\hat C_{N}$. The space $(C_{N}, \rho_s )$ is proper, unbounded, two-point isometrically homogeneous, and invertible. We shall prove these properties in \rf{X:counter}, where we construct a slightly more general collection of spaces. 

\end{example}
\begin{example}\label{X:counter}
In order to illustrate the sharpness of \rf{T:sharp:nonconnected}, we provide the following generalization of the construction from \rf{X:basic}. Using the terminology of the previous example, for any $N,M\in\msf{N}$ such that $N\geq M$, we consider the subset $C_{N|M}\subset C_{N}$ defined by
$$C_{N|M}:= \{  \xi\in C_{N} \;:\; \xi_i \in \{1, \ldots, M\},\,   \forall i \text{ even}   \}.$$
Note that $C_{N|N}=C_N$. We consider $C_{N|M}$ with the metric $\rho_s$ given by \eqref{def:rho}.
The space $(C_{N|M}, \rho_s )$ is proper. Indeed, every point has a neighborhood that is topologically a Cantor set.

We claim that $(C_{N|M}, \rho_s )$ is $2$-point isometrically homogeneous. To verify this claim, we first demonstrate that $(C_{N|M},\rho_s)$ is $1$-point isometrically homogeneous. Fix $\xi,\zeta\in C_{N|M}$. For each $i\in \msf{Z}$  chose a  permutation $\iota_i$ of $\{1, \ldots, M\}$, if $i$ is even, and of $\{1, \ldots, N\}$, if $i$ is odd, such that $\iota_i(\xi_i)=\zeta_i$. We then define $f:C_{N|M}\to C_{N|M}$ such that, for any $\omega\in C_{N|M}$, we have
\begin{equation}\label{E:homogeneity}
f(\omega)=\theta\in C_{N|M} \text{ such that } \begin{cases}
											\theta_i=\omega_i & \text{ if } \xi_i=\zeta_i \\
											\theta_i=\iota_i(\omega_i) & \text{ if } \xi_i\not=\zeta_i.
										\end{cases}
										\end{equation}
We note that $f$ is an isometry of $(C_{N|M}, \rho_s )$ such that $f(\xi)=\zeta$. Therefore, $(C_{N|M}, \rho_s )$ is $1$-point isometrically homogeneous. 

In light of $1$-point isometric homogeneity, it suffices to show that any metric sphere $S(\mathbf{1};s^k)=\{\omega\in C_{N|M}\,|\,\rho_s(\mathbf{1},\omega)=s^{-k}\}$ is homogeneous with respect to isometries of $(C_{N|M},\rho_s)$ fixing $\mathbf{1}$. To see this, we  modify the construction given in \rf{E:homogeneity}. We define the map $f_\mathbf{1}$ to be the identity away from $S(\mathbf{1};s^{-k})$. Given $\xi$ and $\zeta$ in $S(\mathbf{1};s^{-k})$, we define $f_\mathbf{1}$ on $S(\mathbf{1};s^{-k})$ as in \rf{E:homogeneity} under the additional requirement that $\iota_{k+1}(1)=1$. This is additional requirement is possible because neither $\xi_{k+1}$ nor $\zeta_{k+1}$ is equal to $1$. Furthermore, this requirement ensures that $f_\mathbf{1}$ is a self-bijection of $S(\mathbf{1};s^{-k})$. It is then straightforward to see that $f_\mathbf{1}$ is an isometry of $(C_{N|M},\rho_s)$ fixing $\mathbf{1}$ and sending $\xi$ to $\omega$. It follows that $C_{N|M}$ is $2$-point isometrically homogeneous.

Next, we claim that $(C_{N|M}, \rho_s )$ is invertible. Indeed, we define an involutive inversion $\tau$ as follows. Denote by $T $   the  shift operator   $T(\xi)_i:=\xi_{i-1}$, for every $i\in \msf{Z}$. We define an involution $\tau$ as
$$ \tau:C_{N|M}\setminus\{{\bf1} \}\to C_{N|M}\setminus\{{\bf1} \}$$
$$\xi\mapsto  \tau(\xi):=T^{2m}(\xi), \quad \text{ where }m:=m(\xi,{\bf1}),$$
where $m$ is the function in \eqref{def:rho}. To see that $\tau$ is indeed an inversion, fix $\xi$ and $\zeta$ in $C_{N|M}\setminus\{{\bf1} \}$. We consider two cases.

Case 1: $m(\xi,{\bf1})=m(\zeta,{\bf1})=m$. In this case, we have 
\[m(\tau(\xi),\tau(\zeta))=m(\xi,\zeta)-2m.\]
Thus, 
\[\rho_s(\tau(\xi),\tau(\zeta))=\frac{\rho_s(\xi,\zeta)}{\rho_s(\xi,{\bf1})\rho_s(\zeta,{\bf1})}.\]

Case 2: $m_1:=m(\xi,{\bf1})<m(\zeta,{\bf1})=:m_2$. Hence we have , $\tau(\xi)=T^{2m_1}(\xi)$ and $\tau(\zeta)=T^{2m_2}(\zeta)$. Since $-m_2<-m_1$, then for any $i\leq -m_2$ we have $1=\tau(\xi)_i=\tau(\zeta)_i$. However, since $-m_2+1\leq -m_1$, we have $\tau(\zeta)_{-m_2+1}\not=1=\tau(\xi)_{-m_2+1}.$ Consequently, we have $m(\tau(\xi),\tau(\zeta))=-m_2$ and $m(\xi,\zeta)=m_1$. Hence, we observe that
\[m(\tau(\xi),\tau(\zeta))=-m_2=m_1-m_1-m_2=m(\xi,\zeta) -m(\xi,{\bf1})-m(\zeta,{\bf1}),\]
and so
\[\rho_s(\tau(\xi),\tau(\zeta))=\frac{\rho_s(\xi,\zeta)}{\rho_s(\xi,{\bf1})\rho_s(\zeta,{\bf1})}.\]

In both of the above cases we obtain the desired metric behavior for $\tau$. Furthermore, it is straightforward to verify that $\tau:C_{N|M}\setminus\{{\bf1} \}\to C_{N|M}\setminus\{{\bf1} \}$ is a homeomorphism. Therefore, $\tau$ satisfies the definition of an inversion at ${\bf1}$.

Finally, we point out that $(C_{N|M},\rho_s)$ is isometric to $(C_{N'|M'},\rho_{s'})$ if and only if $N'=N$, $M'=M$ and $s'=s$. In particular, when $N>M$ then  $(C_{N|M},\rho_s)$ is not isometric to any $(C_{N'},\rho_{s'})$, for $N'\in N$ and $s'>1$. To see this, we first observe that   the set of distances in $(C_{N|M},\rho_s)$ is equal to $\{s^k\,|\,k\in \msf{Z}\}$. Hence we  only need to consider the case $s'=s$.
Second, we observe that the metric components of the metric spheres $S(\mathbf{1};s^{-k})\subset(C_{N|M},\rho_s)$ characterize $N$ and $M$. We require a bit of terminology: A subset $E\subset X$ is a \textit{$\delta$-component} if it is a maximal subset with the property that every pair of points from $E$ can be joined with by a sequence of points in $E$ whose consecutive distances are less than $\delta$. Using this terminology, we note that, for each $\delta\in(1/s , 1)$ the number of  $\delta$-components in
$S({\bf1};1)$ is exactly $N$, while for   $\delta\in(1  , s)$  the number of  $\delta$-components in $S({\bf1};s)$ is exactly $M$. In conclusion, the values of $N$ and $M$ are metric invariants for $(C_{N|M},\rho_s)$.
\end{example}

\begin{remark}
In light of \rf{T:sharp:nonconnected:2pt} (proved in the sequel), we note that the sphericalized spaces $\Sph_p(C_{N|M})$ are not three-point M\"obius homogeneous if $N\not=M$, despite the fact that they are 2-point isometrically homogeneous and invertible. This can be seen in the fact that, via \rf{L:3pt}, the $3$-point M\"obius homogeneity of $\Sph_p(X)$ implies that $X$ admits dilations of all factors $\lambda\in\{d(x,y)\,|\,x,y\in X\}$, while, if $N\not=M$, the space $C_{N|M}$ only admits dilations of factors $\lambda^2$ for $\lambda\in \{\rho_s(x,y)\,|\,x,y\in C_{N|M}\}$ (see also \rf{P:equivalence1a}). 
\end{remark}

\subsection{Proofs of Theorems~\ref{T:sharp:nonconnected}, \ref{T:sharp:nonconnected:2pt}, and  \ref{T:main:connected:NOT}}\label{s:disconnected_proofs}

In order to present the proof of \rf{T:sharp:nonconnected} we require the following definitions. Given $\delta>0$, a sequence of points $\{x_0,\dots,x_n\}\subset X$ is a \textit{$\delta$-sequence} if, for $k=1,\dots,n$, we have $d(x_{k-1},x_k)<\delta$. A subset $E\subset X$ is \textit{$\delta$-connected} provided that any two points $x,y\in E$ can be joined by a $\delta$-sequence such that $x_0=x$ and $x_n=y$. A \textit{$\delta$-component} of $X$ is a maximal $\delta$-connected subset of $X$.

\begin{proof}[Proof of Theorem~\ref{T:sharp:nonconnected}]
Suppose that $X$ is a disconnected, unbounded, locally compact, isometrically homogeneous metric space that admits an inversion $\sigma_p$ at some point $p\in X$. By \rf{L:uniformly_disconnected}, there exists $\alpha\in(0,1]$ such that $X$ is $\alpha$-uniformly disconnected. Fix $x\in X$. Let $A'\subset X$ denote a closed set such that $B(x;\alpha)\subset A'\subset B(x;1)$ and $\dist(A',X\setminus A')\geq\alpha$. Let $A$ denote the $\alpha$-component of $X$ containing $x$. Note that $A\subset A'\subset B(x;1)$. Since $\Isom(X)$ acts transitively on $X$, the collection $\mathcal{X}_0=\{f(A)\,|\,f\in \Isom(X)\}$ covers $X$. We also claim that $\mathcal{X}_0$ consists of pairwise disjoint sets in the sense that, for $f,g\in \Isom(X)$, either $f(A)=g(A)$ or $f(A)\cap g(A)=\emptyset$. Indeed, suppose that $f,g\in \Isom(X)$ and there exists a point $z\in f(A)\cap g(A)$. By concatenating the $\alpha$-sequences between $f(x)$ and $z$ and between $z$ and $g(x)$ we obtain a $\alpha$-sequence joining $f(x)$ to $g(x)$. Therefore, given any point $w\in g(A)$, there exists a $\alpha$-sequence joining $f(x)$ to $w$. Since $w$ was an arbitrary point of $g(A)$, it follows that $f^{-1}g(A)\subset A$. By symmetry, $g^{-1}f(A)\subset A$. Therefore, $f(A)=g(A)$.  

Choose $s>1$ such that there exists $y\in X$ satisfying $d(x,y)=\sqrt{s}$. By \rf{P:equivalence1a}, there exists an $s^{-1}$-dilation $h:X\to X$ at $x$. For each $i\in\msf{Z}$, define the set of sets
\[\mathcal{X}_{i}=h^{i}(\mathcal{X}_0)=\{h^{i}\circ f(A)\,|\,f\in \Isom(X)\}.\]
Here $h^i$ denotes the $i$-fold composition of $h$ with itself. Thus, for any $E\in \mathcal{X}_i$, we have $h(E)\in\mathcal{X}_{i+1}$. We note that the same set $E$ in $\mathcal{X}_i$ may correspond to two different isometries $f,g\in \Isom(X)$, but this will not hinder our use of $\mathcal{X}_i$ in the sequal. 

For later use, we write $\mathcal{X}$ to denote the disjoint union $\sqcup_{i\in\msf{Z}} \mathcal{X}_i$. Let $N\in\msf{N}$ denote the number of distinct sets from $\mathcal{X}_{1}$ contained in $A$. Since $s^{-1}<1$, we have $N\geq 2$. Since $\Isom(X)$ permutes elements of $\mathcal{X}_0$, $N$ also represents the number of distinct sets from $\mathcal{X}_{1}$ contained in every element of $\mathcal{X}_0$. Similarly, given any $i\in\msf{Z}$, the number $N$ represents the number of distinct sets from $\mathcal{X}_i$ contained in every element of $\mathcal{X}_{i-1}$. 

We label each of the $N$ distinct sets from $\mathcal{X}_{1}$ contained in $A$ using the labels $\{1,2,\dots,N\}$.  
We do this such that $h^{1}(A)\subset A$ receives the label $1$. For each $i\in\msf{Z}$, we use isometries and dilations to  transfer this labelling to the $N$ distinct sets from $\mathcal{X}_{i}$ contained in each element of $\mathcal{X}_{i-1}$. While this labelling is certainly not uniquely determined, we emphasize that, for all $i\in \msf{Z}$, we may assume that the set $h^i(A)$ receives the label $1$. 

We can obtain a bijection between points of $X$ and certain sequences in $\mathcal{X}$ as follows. For each $i\in\msf{Z}$, we denote the collection of distinct (and thus pairwise disjoint) sets in $\mathcal{X}_i$ as $\{E_{i,k}\}_{k\in\msf{N}}$. Given any point $z\in X$, there exists a unique sequence $(E_{i,k_i})_{i\in\msf{Z}}$ such that, for each $i\in\msf{Z}$, we have $z\in E_{i,k_i}\in \mathcal{X}_i$, and $E_{i+1,k_{i+1}}\subset E_{i,k_{i}}$. Since $B(x;\alpha)\subset A$, there exists $M=M(z)\in\msf{Z}$ such that, for any $i\leq M(z)$, we have $z\in h^i(A)\in \mathcal{X}_i$. In other words, for $i\leq M(z)$, we have $E_{i,k_i}=h^i(A)$.  

Conversely, given any sequence $(E_{i,k_i})_{i\in\msf{Z}}$ consisting of elements from $\mathcal{X}$ such that, for each $i\in\msf{Z}$, we have $E_{i,k_i}\in\mathcal{X}_i$ and $E_{i+1,k_{i+1}}\subset E_{i,k_{i}}$, there exists a unique point $z\in X$ such that $\cap_{i=0}^{+\infty}E_{i,k_i}=\{z\}$ (this is because $X$ is proper, each set $E_{i,k_i}$ is closed, and $\diam(E_{i,k_i})\to0$ as $i\to+\infty$). As in the preceding paragraph, there exists $M=M(z)\in\msf{Z}$ such that, for any $i\leq M(z)$, we have $z\in h^i(A)$ and thus $E_{i,k_i}=h^i(A)$. 

Via the preceding two paragraphs, the  labelling of $\mathcal{X}$ constructed above yields a bijection between $X$ and $C_{N}$, as defined in \rf{X:basic}.   We denote this bijection by $\phi:X\to C_{N}$. 

To see that $\phi$ is bi-Lipschitz when $C_{N}$ is equipped with the distance $\rho_s$, we proceed as follows. Choose $\xi=\phi(u)$ and $\zeta=\phi(v)$ in  $C_{N}$, and write $m\in\msf{Z}$ to denote $m (\xi,\zeta)$, where $m(\xi,\zeta)$ is defined as in \eqref{def:rho}. By the construction of $\phi$, there exists $E=h^m(f(A))\in\mathcal{X}_m$ such that $u,v\in E$ but $u$ and $v$ are contained in disjoint elements of $\mathcal{X}_{m+1}$. Note that $f(x)\in E\subset B(f(x);s^{-m})$. Since $\Isom(X)$ acts transitively on $X$ and permutes elements of $\mathcal{X}_m$, we conclude that $\Isom(X)$ acts transitively on $E$. Therefore, $E\subset B(u;s^{-m})$, and so $d(u,v)< s^{-m}$. On the other hand, distinct sets from $\mathcal{X}_{m+1}$ are separated by a distance of at least $\alpha s^{-m-1}$. Therefore, $d(u,v)\geq \alpha s^{-m-1}$. It follows that
\begin{equation}\label{E:final}
d(u,v)<\rho_s(\phi(u),\phi(v))\leq \frac{s}{\alpha}d(u,v).
\end{equation}
Thus $\phi:X\to C_{N}$ is $(s/\alpha)$-bi-Lipschitz. 
\end{proof}

We shall make use of the following result in the proof of \rf{T:sharp:nonconnected:2pt}. We include the proof for the sake of completeness, noting its similarity to the proof of \rf{P:equivalence1b}. 

\begin{lemma}\label{L:3pt}
Suppose $X$ is unbounded. The space $\hat{X}$ is $3$-point M\"obius homogeneous if and only if the following two statements are true: 
\begin{enumerate}
	\item{For any two pairs of distinct points $x,y$ and $u,v$ in $X$, there exists a $\lambda$-similarity $f:X\to X$ such that $f(x)=u$, $f(y)=v$, and $\lambda=d(u,v)/d(x,y)$.}
	\item{$X$ is invertible.}
\end{enumerate}
\end{lemma}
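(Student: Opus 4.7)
The plan is to prove the two directions separately, in both cases using $\infty$ as an anchor and, for the backward direction, exploiting the inversion supplied by (2).

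\emph{Forward direction.} Assume $\hat X$ is 3-point M\"obius homogeneous. For (1), we apply 3-point M\"obius homogeneity to the triples $(x,y,\infty)$ and $(u,v,\infty)$, obtaining a M\"obius self-homeomorphism $f$ of $\hat X$ that fixes $\infty$ and satisfies $f(x)=u$, $f(y)=v$. By \rf{R:similarity} (using that $X$ is unbounded), $f$ restricts to a similarity of $X$, and comparing $d(f(x),f(y))$ with $d(u,v)$ forces the factor to be $d(u,v)/d(x,y)$. For (2), fix $p\in X$ and $q\in X_p$, and use 3-point M\"obius homogeneity to produce a M\"obius map $\sigma:\hat X\to\hat X$ sending $(p,\infty,q)\to(\infty,p,q)$, so that $\sigma$ swaps $p$ with $\infty$ while fixing $q$. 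Repeating the cross-ratio computation from the proof of \rf{P:equivalence1b}, with $q$ serving as an extra fixed point to pin down the otherwise undetermined constant, yields
\[d(\sigma(a),\sigma(b))=\frac{d(p,q)^{2}\,d(a,b)}{d(a,p)\,d(b,p)}\qquad(a,b\in X_p),\]
exhibiting $\sigma$ as an inversion of the rescaled metric $(X,d/d(p,q))$. To obtain an inversion of $(X,d)$ itself we exploit the further flexibility of 3-point M\"obius homogeneity: applied to triples $(p,\infty,q)\to(\infty,p,q')$ with variable $q'\in X_p$, it gives M\"obius swap maps of $\hat X$ with multiplicative factor $d(p,q)\,d(p,q')$; combining with the dilations at $p$ provided by (1) (whose factors realize every ratio of distances in $X$) we adjust this factor to $1$ and obtain the desired inversion $\tau_p:X_p\to X_p$.

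\emph{Backward direction.} Assume (1) and (2). Specializing (1) to pairs with $d(u,v)=d(x,y)$ yields isometries of $X$, so $X$ is isometrically homogeneous; combining with (2), conjugating the given inversion by an isometry gives an inversion $\tau_p:X_p\to X_p$ at each $p\in X$, which extends to a M\"obius self-homeomorphism of $\hat X$ via $\tau_p(p)=\infty$ and $\tau_p(\infty)=p$. Now, given triples $(x_1,x_2,x_3)$ and $(y_1,y_2,y_3)$ of distinct points of $\hat X$, if $x_3\in X$ we apply $\tau_{x_3}$ to send $x_3\to\infty$, and likewise apply $\tau_{y_3}^{-1}$ on the target side if $y_3\in X$. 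This reduces the problem to matching two triples of the form $(x_1',x_2',\infty)$ and $(y_1',y_2',\infty)$, which is handled directly by the similarity from (1) extended to $\hat X$ by fixing $\infty$; composing the relevant M\"obius maps produces the required M\"obius self-homeomorphism of $\hat X$.

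The hardest step will be the forward direction of (2): passing from an inversion of a rescaled metric to an inversion of $(X,d)$ itself requires a careful use of 3-point M\"obius homogeneity to control the multiplicative factor in the swap maps, whereas the backward direction is essentially a mechanical chaining of the ingredients supplied by (1) and (2).
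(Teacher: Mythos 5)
Your proposal follows essentially the same route as the paper: in the forward direction you anchor $\infty$ to extract similarities for (1) and build the inversion from a swap map fixing a third point, normalizing the resulting constant $r=d(p,q)^{2}$ by a dilation at $p$ supplied by (1), exactly as in the paper's proof; in the backward direction you reduce arbitrary triples to triples containing $\infty$ via an isometry followed by the inversion, again as the paper does. The only cosmetic difference is that you allow the swap map to send $q$ to a variable $q'$ before normalizing, whereas the paper fixes $q'=q$ and directly invokes a dilation of factor $d(p,q)^{2}$ at $p$.
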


\begin{proof}
We first assume that $\Sph_p(X)$ is $3$-point M\"obius homogeneous. Given any two pairs of distinct points $x,y$ and $u,v$ in $X$, let $f:\Sph_p{X}\to\Sph_p{X}$ denote a M\"obius map fixing $\infty$ such that $f(x)=u$ and $f(y)=v$. By \rf{R:similarity}, $f$ is a $\lambda$-similarity of $X$. Furthermore, 
\[d(u,v)=d(f(x),f(y))=\lambda\,d(x,y),\]
and so $\lambda=d(u,v)/d(x,y)$. Thus we confirm $(1)$. 

To verify $(2)$, fix any point $a\in X\setminus\{p\}$. let $f\Sph_p(X)\to\Sph_p(X)$ denote a M\"obius homeomorphism such that $f(p)=\infty$, $f(\infty)=p$, and $f(a)=a$. For any point $x\in X$, we find that
\[d(p,f(x))=r\cdot d(p,x)^{-1},\]
where $r=d(p,a)^2$. Here we follow the calculations utilized in the proof of \rf{P:equivalence1b}. Continuing these calculations, we find that, for any $x,y\in X\setminus\{p\}$, we have
\[d(f(x),f(y))=\frac{r\,d(x,y)}{d(x,p)\,d(y,p)}.\]
By the proof of $(1)$ above, the space $X$ admits an $r$-dilation $h$ at $p$ of factor $r$. Therefore, $f\circ h:X_p\to X_p$ is an inversion of $X$ at $p$.

Conversely, if $X$ satisfies $(1)$ and $(2)$, then fix a triple $a,b,\infty$ of distinct points from $\hat{X}$. Let $x,y,z$ denote a second triple of distinct points from $\hat{X}$. If $z=\infty$, then, via $(1)$, there exists a M\"obius map $f:\Sph_p(X)\to\Sph_p(X)$ fixing $\infty$ such that $f(x)=a$ and $f(y)=b$. If $z\not=\infty$, then we first map $z$ to $p$ via an isometry of $X$ and then, via $(2)$, send $p$ to $\infty$ via the inversion of $X$. Thus we are back in the case that $z=\infty$, and we confirm that $\Sph_p(X)$ is $3$-point M\"obius homogeneous.
\end{proof}

\begin{proof}[Proof of Theorem \ref{T:sharp:nonconnected:2pt}]
Via \rf{L:3pt}, we see that $X$ admits dilations of arbitrarily large factors. Therefore, since $X$ is locally compact, it is straightforward to verify that $X$ is proper. Furthermore, $X$ satisfies the assumptions of \rf{L:uniformly_disconnected}, and so $X$ is uniformly disconnected. We claim that $X$ is an ultrametric space. By way of contradiction, suppose there exist points $x,y,z$ in $X$ such that $d(x,y)>\max\{d(x,z),d(z,y)\}$. In particular, there exists $c\in (0,1)$ such that $c\,d(x,y)\geq \max\{d(x,z),d(y,z)\}$. In order to obtain the desired contradiction, we construct a non-degenerate connected subset of $X$ using a construction from the proof of \cite[Lemma 3.5]{KL16}. We write $x_0^{(1)}=x$, $x_1^{(1)}=z$, and $x_2^{(1)}=y$. Given a sequence 
\[x=x_0^{(k)},x_1^{(k)},\dots,x_{2^k}^{(k)}=y,\]
for which pairs of consecutive points are distinct, we form a new sequence
\[x=x_0^{(k+1)},x_1^{(k+1)},\dots,x_{2^{k+1}}^{(k+1)}=y\]
by defining
\[x_i^{(k+1)}=\begin{cases} x_{i/2}^{(k)} & \text{ if } i \text{ is even }\\
					f_i^{(k+1)}(z)	& \text{ if } i \text{ is odd}.
					\end{cases}\]
Here $f_i^{(k+1)}$ is a $\lambda_i^{(k+1)}$-similarity of $X$ such that 
\[f_i^{(k+1)}(x)=x_{i-1}^{(k+1)}, \quad f^{(k+1)}(y)=x_{i+1}^{(k+1)}, \quad \text{and} \quad \lambda_i^{(k+1)}=d(x_{i-1}^{(k+1)},x_{i+1}^{(k+1)})/d(x,y).\]
For use in the sequel, we also define $\lambda^{(1)}_1:=1$. By construction, we note that pairs of consecutive points in the newly created chain are distinct.

We claim that $\Lambda_k:=\max\{\lambda_i^{(k)}\,|\,i=1,\dots,2^k\}$ converges to $0$ as $k\to+\infty$. To see this, for any $k\geq1$ and odd integer $1\leq i <2^{k+1}$, we have 
\[\lambda_i^{(k+1)}=\frac{d(x_{(i-1)/2}^{(k)},x_{(i+1)/2}^{(k)})}{d(x,y)}=\frac{d(f_j^{(k)}(z),f_j^{(k)}(w))}{d(x,y)}.\]
Here $j\in\{(i-1)/2,(i+1)/2\}$ is odd. If $j=(i-1)/2$, then $w=y$. If $j=(i+1)/2$, then $w=x$. Continuing, we find that
\[\frac{d(f_j^{(k)}(z),f_j^{(k)}(w))}{d(x,y)}\leq\Lambda_k\frac{d(z,w)}{d(x,y)}\leq c\Lambda_k.\]
Therefore, $\Lambda_{k+1}\leq c\Lambda_k$. By way of induction, $\Lambda_k\leq c^{k-1}$. Since $c\in(0,1)$, we conclude that $\Lambda_k\to0$ as $k\to+\infty$. 

For $k\in \msf{N}$, write $\mathbf{x}_k$ to denote the collection of points $\{x_0^{(k)},\dots,x_{2^k}^{(k)}\}$ constructed as above. Write $E$ to denote the closure of the union $\cup_{k\in\msf{N}}\mathbf{x}_k\subset X$. In order to reach a contradiction and conclude that $X$ is an ultrametric space, we demonstrate that $E$ is a non-degenerate connected set. Indeed, $x,y\in E$ and $x\not=y$, so $E$ is non-degenerate. To see that $E$ is connected, suppose that $U_1\cup U_2$ is a non-trivial separation of $E$ by disjoint open sets. It is not difficult to verify that $E$ is bounded, and so $E$ is compact. Therefore, we may assume that $\overline{U}_1\cap\overline{U}_2=\emptyset$ and $\dist(U_1,U_2)>\varepsilon$ for some $\varepsilon>0$. However, since $\Lambda_k\to0$, there exists $K\in\msf{N}$ such that, for any $k\geq K$, we have $\Lambda_k<\varepsilon d(x,y)$. Since consecutive points of each $\mathbf{x}_k$ are within distance of $\Lambda_kd(x,y)$ of each other, it follows that $\dist(U_1,U_2)<\varepsilon$. This contradiction demonstrates that $E$ is connected, which in turn contradicts the fact that $X$ is uniformly disconnected. Therefore, $X$ is an ultrametric space.

Given $r>0$ and $x\in X$, since $X$ is an ultrametric space, the ball $B(x;r)$ is closed. Therefore, 
\[s(r)=\max\{d(x,y)\,|\,y\in B(x;r)\}<r.\]
If there exist $\lambda$-dilations of $X$ at $x$ with $\lambda>1$ arbitrarily close to $1$, we contradict the definition of $s(r)$. Therefore, 
\[\lambda_0:=\inf\{\lambda>1\,|\,\exists\,\lambda\text{-dilations of } X\}=\min\{\lambda\,|\,\exists\,\lambda\text{-dilations of } X\}>1.\]
Here the properness of $X$ allows us to replace the infimum by a minimum in the definition of $\lambda_0$. By \rf{L:3pt}, $(1,\lambda_0)\cap\Delta(X)=\emptyset$ and $(\lambda_0^{-1},1)\cap\Delta(X)=\emptyset$. Here 
\[\Delta(X):=\{r\geq0\,|\,\exists\, x,y\in X \text{ such that } d(x,y)=r\}.\]
Since there exists a $\lambda_0$-dilation of $X$, it follows that, for every $k\in \msf{Z}$, we have $(\lambda_0^k,\lambda_0^{k+1})\cap\Delta(X)=\emptyset$. Since $\Delta(X)\not=\emptyset$ and $X$ contains more than one point, we conclude that $\Delta(X)=\{\lambda_0^k\,|\,n\in\msf{Z}\}$.

To conclude, we appeal to the proof of \rf{T:sharp:nonconnected}. Using the methods of this proof, we construct sets
\[\mathcal{X}_i:=h^i(\mathcal{X}_0):=\{h^i\circ f(B(x;1))\,|f\in \Isom(X)\}.\]
Here $h$ is a $\lambda_0^{-1}$-dilation of $X$ at $x$. We then proceed to construct the bijection $\phi:X\to C_N$, where $N$ is the number of pairwise distinct balls of radius $\lambda_0^{-1}$ contained in $B(x;1)$. As in \rf{E:final}, for points $u,v\in X$, we have
\[d(u,v)< \rho_{\lambda_0}(\phi(u),\phi(v))\leq \lambda_0\,d(u,v).\]
Here we use the fact that $X$ is $\alpha$-uniformly disconnected with $\alpha=1$. Since both $d(u,v)$ and $\rho_{\lambda_0}(\phi(u),\phi(v))$ are integer powers of $\lambda_0$, we conclude that $\rho_{\lambda_0}(\phi(u),\phi(v))=\lambda_0d(u,v)$. We conclude that $\phi\circ h:X\to C_N$ is an isometry.
\end{proof}

\begin{proof}[{Proof of Theorem~\ref{T:main:connected:NOT}}]
From \rfs{L:quasi_dilation},~\ref{L:uniformly_perfect}, and~\ref{L:uniformly_disconnected}, we conclude that $X$ is uniformly perfect, uniformly disconnected, proper, and doubling. Here we say that a metric space $X$ is \textit{doubling} provided that there exists a finite constant $D\geq1$ such that any ball of radius $r>0$ in $X$ can be covered by at most $D$ balls of radius $2r$. Given $p\in X$, via \cite[Theorem 1.2]{Heer-QM} we conclude that $\sph_p(X)$ is uniformly disconnected. Via \cite[Theorem 7.1]{Meyer-thesis} we conclude that $\sph_p(X)$ is uniformly perfect. Via \cite[Theorem 1.1]{Heer-QM} (see also \cite[Proposition 3.2.2]{LS15}) we conclude that $\sph_p(X)$ is doubling. In these assertions we are using the facts that the identity map between $X$ and $\sph_p(X)\setminus\{\infty\}$ is strongly quasi-M\"obius and that quasi-sphericalization can be viewed as a special case of quasi-inversion (see \cite[pg.\,847]{BHX-inversions}).

Since $\sph_p(X)$ is compact, doubling, uniformly perfect, and uniformly disconnected, by \cite[Proposition 15.11]{DS-fractals} we conclude that $\sph_p(X)$ is quasi-symmetrically homeomorphic to $(\hat{C}_2,\rho_2)$ (see \cite[Section 2.3]{DS-fractals} and \rf{X:basic}). Since $\sph_p(X)\setminus\{\infty\}$ is quasi-M\"obius homeomorphic to $X$, $\hat{C}_2\setminus\{(1,3,1,\dots)\}$ is quasi-M\"obius equivalent to $C_2$ (see \rf{X:basic}), and all of these spaces are uniformly bi-Lipschitz homogeneous (via \rf{P:invertible_char}), it follows that $X$ is quasi-M\"obius homeomorphic to $(C_2,\rho_2)$. In fact, $X$ is quasi-symmetrically homeomorphic to $(C_2,\rho_2)$. 
\end{proof}

\bibliographystyle{amsalpha}  
\bibliography{bib}            

\end{document}                